\newdimen\mainfontsize \mainfontsize=1\@ptsize pt
\theoremstyle{plain}
\newtheorem{thm}{Theorem}[section]
\newtheorem{lem}[thm]{Lemma}
\newtheorem{cor}[thm]{Corollary}
\theoremstyle{definition}
\newtheorem{defn}[thm]{Definition}
\newtheorem{ass}[thm]{Assumption}
\theoremstyle{remark}
\newtheorem{rem}[thm]{Remark}
\DeclareMathOperator*{\esssup}{ess\,sup}
\numberwithin{thm}{section}
\numberwithin{equation}{section}
\title{Stochastic PDEs for large portfolios with general mean-reverting volatility processes}
\author{Ben Hambly\footnote{Mathematical Institute, University of Oxford, e-mail: hambly@maths.ox.ac.uk} $\,$ and Nikolaos Kolliopoulos\footnote{Department of Mathematical Sciences, Carnegie Mellon University, e-mail: nkolliop@andrew.cmu.edu (corresponding author)} \\
Mathematical Institute, University of Oxford \\
Department of Mathematics, University of Michigan}
\date{\today} % Activate to display a given date or no date (if empty),
\begin{document}
\maketitle

%
%\begin{document}
%\noindent \begin{center}
%\textbf{\small{}A STOCHASTIC FILTERING PROBLEM IN THE 2-DIMENSIONAL
%HALF-SPACE ARISING IN STOCHASTIC VOLATILITY DERIVATIVE PRICING OVER
%LARGE PORTFOLIOS}
%\par\end{center}{\small \par}
%
%\vspace{0.55cm}
%
%
%\begin{center}
%{\small{}BEN HAMBLY \hspace{0.5cm} NIKOLAOS KOLLIOPOULOS}
%\par\end{center}{\small \par}
%
%\noindent \begin{center}
%\textit{\small{}Mathematical Institute, University of Oxford, Woodstock
%Road, Oxford, OX2 6GG}
%\par\end{center}{\small \par}
%
%\vspace{0.6cm}

\begin{abstract}
We consider a structural stochastic volatility model for the loss from a large portfolio of credit risky assets. Both the asset value and the volatility processes are correlated through systemic Brownian motions, with default determined by the asset value reaching a lower boundary. We prove that if our volatility models are picked from a class of mean-reverting diffusions, the system converges as the portfolio becomes large and, when the vol-of-vol function satisfies certain regularity and boundedness conditions, the limit of the empirical measure process has a density given in terms of a solution to a stochastic initial-boundary value problem on a half-space. The problem is defined in a special weighted Sobolev space. Regularity results are established for solutions to this problem, and then we show that there exists a unique solution. In contrast to the CIR volatility setting covered by the existing literature, our results hold even when the systemic Brownian motions are taken to be correlated. 
\end{abstract}

\section{Introduction}

A key quantity in the modeling of large credit portfolios is the loss process, the proportion of the assets that have defaulted as a function of time. This is a critical component required for the pricing of credit indices \cite{BHHJR,Sch} and asset backed securities \cite{AHL}, as well as in models for systemic risk, \cite{NS,HS}.
A structural approach to credit portfolios is to model the values of the assets as particle positions satisfying a system of correlated stochastic differential equations (SDEs), with an asset defaulting when its value hits a lower boundary. The asset values are driven by their own idiosyncratic noises and a common systemic noise which models macroeconomic effects on the whole system. By considering the empirical measure one can capture the evolution of the whole portfolio and, in particular, that of the loss process. Taking the limit as the number of assets tends to infinity, the idiosyncratic noises are averaged out, and the asymptotic behaviour of the empirical measure is described by a stochastic partial differential equation (SPDE) driven by the systemic noise in a half-space. 
%Working with the SPDE instead of the finite portfolio empirical measure provides a simple approximation for the model, which may have a smaller computational cost since it is not necessary to simulate many idiosyncratic Brownian paths. 
Other approaches to the modelling of credit risk in large portfolios which also lead to the asymptotic analysis of an interacting particle system include the use of reduced form models, where the position of each particle describes the default intensity of an asset \cite{GSS,GSSS,SSG}. In this paper we study a class of structural credit portfolio models with stochastic volatility, for which the values of the loss process in the large portfolio limit (when the number of assets tends to infinity) can be estimated by solving the corresponding limiting SPDE numerically.

%When considering large portfolios is it is natural to use simple factor models to correlate the assets, for instance we will assume that each asset price process is driven by an idiosyncratic Brownian motion describing the price changes due to the asset's individual circumstances, and a systemic Brownian motion (common for all the assets) reflecting the impact of macroscopic events at the whole portfolio or market.

%Studying the evolution in time of large portfolios of correlated assets by using factor models has become popular in the last ten years. The financial crisis of 2008 showed that the correlation between credit risky assets was not adequately modeled, since the default of a large number of assets had an unexpectedly huge impact on the whole economy with catastrophic results, including even more defaults and immense losses to investors who had invested in misspriced CDOs. This led to the development of many new models for credit risk within a large portfolio or economy, including new Gaussian copula models, which however did not describe the dynamics among the asset prices over time. Therefore, the implementation of these models could be really complex in some situations, for example when they would be used to price certain credit derivatives depending on many financial products with different maturities. 

A simple constant volatility structural model for a large credit portfolio was studied in \cite{BHHJR}, and extended 
in \cite{HL16} to the case where the correlation of the values of any two assets depends on the total loss. Models 
of this kind incorporating stochastic volatility were introduced in \cite{HK17}, where volatilities were modelled 
as CIR processes. In \cite{HK17, ERR} we were able to prove the existence of an integrable density for 
the empirical measure limit which satisfies the half-space SPDE. However Sobolev regularity and a suitable boundary condition could not be established except under the very strong assumption that the systemic noises driving the value processes and the volatilities were uncorrelated. Furthermore, even with some weighted 
Sobolev regularity and the condition of vanishing at the boundary, we were unable to prove uniqueness of solutions 
to the SPDE. In this paper we adapt that work to a different class of stochastic volatility models, which contains 
the Ornstein-Uhlenbeck process, and which leads to a well-posed stochastic 
initial-boundary value problem without the need to impose any unrealistic conditions. This improves the applicability 
of the model, while the analysis in the weighted Sobolev space that needs to be used is novel and interesting from a 
mathematical point of view.

We will work with a general form of a stochastic volatility structural large portfolio model, in which we have a collection of $N$ credit risky assets, where the value process $A^i$ of the $i$-th asset satisfies the system of SDEs
\begin{equation}
\begin{array}{rcl}
dA_{t}^{i} &=& A_{t}^{i}{\mu}_{i}dt+A_{t}^{i}h\left(\sigma_{t}^{i}\right)\left(\sqrt{1-\rho_{1,i}^{2}}dW_{t}^{i}+\rho_{1,i}dW_{t}^{0}\right),\;0\leq t\leq T_{i}\\
d\sigma_{t}^{i} &=& k_{i}(\theta_{i}-\sigma_{t}^{i})dt+\xi_{i}q\left(\sigma_{t}^{i}\right)\left(\sqrt{1-\rho_{2,i}^{2}}dB_{t}^{i}+\rho_{2,i}dB_{t}^{0}\right),\;t\geq 0\\
A_{t}^{i}&=&b^{i},\;t>T_{i}\\
(A_{0}^{i},\,\sigma_{0}^{i}) &=& (a^{i},\,\sigma^{i}),
\end{array}
\label{eq:modelA}
\end{equation}
for all $i\in\{1,2,\dots,N\}$, where $T_{i}:=\inf\{t\geq0:\,A_{t}^{i}=b^{i}\}$ models the time of the $i$-th asset's default, $a^{1},\,a^{2},\,\ldots,\,a^{N}$ and $\sigma^{1},\,\sigma^{2},\,\ldots,\,\sigma^{N}$
are the initial values of the asset values and the volatilities respectively,
$b^{i} \leq a^{i}$ is the default barrier for the value of the $i$-th asset, $h$ 
and $q$ are functions with enough regularity (specified in Assumption~\ref{ass1} and Theorem~\ref{thm:4.2}), and $W_{t}^{0},\,B_{t}^{0},\,\ldots,\,W_{t}^{N},\,B_{t}^{N}$
are standard Brownian motions. The processes $W_{t}^{0}$ and $B_{t}^{0}$ are used to capture the systemic risk, while $W_{t}^{i}$ and $B_{t}^{i}$ for $i \geq 1$ represent the random movements in the $i$-th asset's value due to idiosyncratic features. We denote by $C_{i}=(k_{i},\,\theta_{i},\,\xi_{i},
\,r_{i},\,\rho_{1,i},\,\rho_{2,i})$ the vector of the coefficients of the SDEs that drive the $i$-th asset's value and volatility processes, and we assume that $b^{i}$, $(a^{i},\,\sigma^{i})$ and $C_{i}$ are drawn independently from some appropriate distributions for each $i$, the Brownian motions are independent from each $a^{i},\,\sigma^{i}$ and $C_{i}$, and among the Brownian motions only $W_{t}^{0}$ and $B_{t}^{0}$ are allowed to have a non-zero correlation. Next, we consider the corresponding logarithmically scaled particle system, which is obtained by setting $X_{t}^{i}=\left(\ln A_{t}^{i}-\ln b^{i}\right)$ in \eqref{eq:modelA} and by using Ito's formula to derive the following equivalent system of SDEs
\begin{equation}
\begin{array}{rcl}
dX_{t}^{i} &=& \left(r_{i}-\frac{h^{2}(\sigma_{t}^{i})}{2}\right)dt+h(\sigma_{t}^{i})\left(\sqrt{1-\rho_{1,i}^{2}}dW_{t}^{i}+\rho_{1,i}dW_{t}^{0}\right),\;\;0\leq t\leq T_{i} \\
d\sigma_{t}^{i} &=& k_{i}(\theta_{i}-\sigma_{t}^{i})dt+\xi_i\sqrt{1-\rho_{2,i}^{2}}q\left(\sigma_{t}^{i}\right)dB_{t}^{i}+\xi_i\rho_{2,i}q\left(\sigma_{t}^{i}\right)dB_{t}^{0},\;\;t\geq 0 \\
X_{t}^{i} &=& 0,\;t>T_{i} \\
(X_{0}^{i},\,\sigma_{0}^{i}) &=& (x^{i},\sigma^{i}),
\end{array}
\label{eq:model}
\end{equation}
for $i\in\{1,\,2,\,\ldots,\,N\}$, where $x^{i}=\left(\ln a^{i}-\ln b^{i}\right) \geq 0$
and $T_{i}=\inf\{t\geq0:\,X_{t}^{i}=0\}$ $\forall\,1\leq i\leq N$.
The process $X^i$ is usually thought of as the distance to default of the $i$-th asset.

We will study the large portfolio limit, that is the limit as $N \to \infty$, of the empirical measure of the above interacting particle system. %For the CIR volatility case, that is $q(z) = \sqrt{z}$, this analysis has been the subject of \cite{HK17} and its erratum \cite{ERR}. In this paper we consider a class of functions $q$ for which we can obtain significantly better results. An underlying motivation is to develop computationally efficient methods for pricing derivatives like CDO tranches and for estimating quantities in risk management such as the PD (Probability of Default) and the expected LGD (Loss Given Default) within a large portfolio. 
We consider the empirical measure process given by
\begin{equation}\label{eq:1.5}
v_{t}^{N}=\frac{1}{N} \sum_{i=1}^{N}\delta_{X_{t}^{i},\sigma_{t}^{i}},
\end{equation}
on $\mathbb{R}^2$ (which has no mass on $(-\infty, 0)\times\mathbb{R}$), and its restriction to $(0,\infty)\times\mathbb{R}$ which is given by 
\begin{equation}\label{eq:1.6}
v_{1,t}^{N}=\frac{1}{N} \sum_{i=1}^{N}\delta_{X_{t}^{i},\sigma_{t}^{i}}\mathbb{I}_{\{T_{i}>t\}},
\end{equation}
for $t \geq 0$. The total mass of $v_{t}^{N}$ is always equal to $1$, while the total mass of $v_{2,t}^{N} := v_{t}^{N} - v_{1,t}^{N}$ is the loss process of our portfolio, which takes values in $\left[0, \, 1\right]$. 
We will see that the convergence results established in \cite{HK17} for the CIR volatility case (in which $q(z) = \sqrt{z}$) hold in the general case as well. That is, for some $\sigma$-algebra $\mathcal{G}$ contained in the $\sigma$-algebra generated by the initial data, we have both
\begin{eqnarray*}
v_{t}^{N}\to v_{t} = \mathbb{P}\left(\left(X_{t}^{1}, \, \sigma_{t}^{1}\right)\in\cdot\,|\,W^{0},\,B^{0},\,\mathcal{G}\right)
\end{eqnarray*}
and
\begin{eqnarray*}
v_{1,t}^{N}\to v_{1,t} &=& \mathbb{P}\left(\left(X_{t}^{1}, \, \sigma_{t}^{1}\right)\in\cdot,T_{1}>t\,|\,W^{0},\,B^{0},\,\mathcal{G}\right)
\end{eqnarray*}
weakly as $N\to\infty$, for all $t \geq 0$, $\mathbb{P}$-almost surely. We can write
\begin{eqnarray*}
v_{1,t} = \mathbb{E}\left[v_{t, \, C_{1}}\left(\cdot\right)\,|\,W^{0},\,B^{0},\,\mathcal{G}\right]
\end{eqnarray*}
where we define $v_{t, \, C_{1}}\left( \cdot,\right) := \mathbb{P}\left(\left(X_{t}^{1}, \, \sigma_{t}^{1}\right)\in\cdot,T_{1}>t\,|\,W^{0},\,B^{0}, \, C_{1}, \, \mathcal{G} \right)$. Given a realization of the coefficient vector $C_{1}$, if the measure-valued process $v_{t, \, C_{1}}\left( \cdot,\right)$ has a density $u_{t,C_{1}}$, it will satisfy the SPDE
\begin{eqnarray}\label{spdeweak1}
u_{t,C_{1}}(x,\,y)&=&u_{0}(x,\,y)-\int_{0}^{t}\left(r_1-\frac12 h^{2}(y)\right)\left(u_{s,C_{1}}(x,\,y)\right)_{x}ds \nonumber \\
& & -\int_{0}^{t}k_{1}\left(\theta_{1}-y\right)\left(u_{s,C_{1}}(x,\,y)\right)_{y}ds \nonumber \\
& & +\frac{1}{2}\int_{0}^{t}h^{2}(y)\left(u_{s,C_{1}}(x,\,y)\right)_{xx}ds \nonumber \\
&  & +\frac{\xi_{1}^{2}}{2}\int_{0}^{t}\left(q^2\left(y\right)u_{s,C_{1}}(x,\,y)\right)_{yy}ds \nonumber \\
& & +\xi_{1}\rho_{3}\rho_{1,1}\rho_{2,1}\int_{0}^{t}\left(h(y)q\left(y\right)u_{s,C_{1}}(x,\,y)\right)_{xy}ds 
\nonumber \\
& &  -\rho_{1,1}\int_{0}^{t}h(y)\left(u_{s,C_{1}}(x,\,y)\right)_{x}dW_{s}^{0} \nonumber \\
& & -\xi_{1}\rho_{2,1}\int_{0}^{t}\left(q\left(y\right)u_{t,C_{1}}(x,\,y)\right)_{y}dB_{s}^{0}, \qquad \quad \label{eq:spde1}
\end{eqnarray}
in some weak sense in $\mathbb{R}^{+} \times \mathbb{R}$, where $u_0$ is the initial density and $\rho_{3} := \int_0^1dW_s^0dB_s^0$ is the correlation coefficient between $W_{t}^{0}$ and $B_{t}^{0}$. As in \cite{HK17}, the aim is to derive some boundary condition on the axis $x = 0$, so we can then solve the above half-space SPDE for a random sample $\{c_{1},\,c_{2},\,\ldots,\,c_{n}\}$ of the coefficient vector $C_{1}$, and then approximate the loss process from
\begin{eqnarray}
\lim_{N\rightarrow\infty}v_{t}^{N}(\{0\}\times\mathbb{R})&=&1-\lim_{N\rightarrow\infty}v_{1,t}^{N}(\mathbb{R}^{2}) \nonumber \\
&=& 1-\mathbb{E}\left[\int_{0}^{\infty}\int_{\mathbb{R}}u_{t,C_{1}}(x,\,y)dxdy,\,|\,W^{0},\,B^{0},\,\mathcal{G}\right] \nonumber
\\
&\thickapprox & 1-\frac{1}{n}\sum_{i=1}^{n}\int_{0}^{\infty}\int_{\mathbb{R}}u_{t,c_{i}}(x,\,y)dxdy. \label{eq:1.8}
\end{eqnarray}
This approximation could then be used for pricing CDO tranches, whose payoffs are piecewise linear functions of this loss process, as well as for the computation of particular risk measures in a large market modelled as a large portfolio.   
%Since the above computation does not involve the $2N$ idiosyncratic Brownian motions, we do not need to simulate their paths, and thus we have a computationally efficient method for approximating the loss process when the number of assets $N$ is large. 

Of course, the parameters of the distribution from which each $C_i$ is picked have to be estimated and, when the $C_i$s can take different values, $n$ in \eqref{eq:1.8} has to be sufficiently large to give an accurate approximation. A natural way to calibrate our model is to simply assume that $C_i$ equals the same deterministic vector $C$ for all $i \in \mathbb{N}$, solve the initial-boundary value problem numerically to estimate the loss process from
\[
\lim_{N\rightarrow\infty}v_{t}^{N}(\{0\}\times\mathbb{R}) = 1-\int_{0}^{\infty}\int_{\mathbb{R}}u_{t,C}(x,\,y)dxdy 
\]
for many different values of $C$, and finally minimize the least squares distance between model and market prices of CDO tranches to determine the best fit value of the parameter vector $C$.
The empirical weak limit $v_{1,t}$ in this deterministic coefficients setting will coincide with the measure-valued process $v_{t,C}$ whose density $u_{t,C}$ satisfies our SPDE, just as in \cite{BHHJR}. Therefore, the multilevel Monte Carlo method used in \cite{GR} for the model studied in \cite{BHHJR}, if extended appropriately, could also be used for our model to speed up the approximation of CDO tranche prices. 

%Obviously, different boundary conditions at $x = 0$ lead to different solutions to \eqref{spdeweak1} and thus, 
In order to recover the density $u_{t, \, C_{1}}$ from that SPDE, we need to determine its boundary behaviour. The density of the empirical measure limit derived in \cite{BHHJR} for a constant volatility structural model was shown to vanish at $x = 0$ and thus, we expect this to be the case in the stochastic volatility setting as well. This makes $u_{t, \, C_{1}}(0, y) = 0$ for all $t > 0$ and $y \in \mathbb{R}$ the natural candidate for the boundary condition of our stochastic initial-boundary value problem, in the absence of which our SPDE \eqref{spdeweak1} is only satisfied in a very weak sense, where the derivatives are defined as distributions over a space of test functions $\phi$ with $\phi(0) = \phi'(0) = 0$. This expected boundary behaviour was derived in \cite{HK17,ERR} for the CIR volatility case along with some weighted Sobolev regularity, but both results could only be established when $\rho_3 = 0$. Even in the classical Heston model used for simple option pricing, the noises driving the price of a single asset and its volatility are generally taken to be correlated, see \cite{FOU}. Furthermore, even when $\rho_3 = 0$ holds, the uniqueness of solutions to the stochastic initial-boundary value problem derived in \cite{HK17,ERR} is still an open question, a potential issue for the numerical implementation of the model.
%which means that solving the SPDE numerically could give a solution which is not equal to the density $u_{t, \, C_{1}}$, leading to incorrect credit derivative prices and risk measure estimates. 

In the CIR volatility case, the condition $\rho_3 = 0$ was used as both the regularity and the boundary behaviour of $u_{t, \, C_{1}}$ had to be transferred from that of the one-dimensional density obtained in \cite{BHHJR} in a constant volatility setting. This is achieved by conditioning on the volatility path and approximating it by discrete paths, but the inability to derive a deterministic bound for the density of the volatility given $B^0$ adds a substantial difficulty to that as long as conditioning on $B^0$ affects the distribution of $W^0$, that is, as long as the two systemic noises are correlated. The challenge in proving uniqueness of solutions to the half-space SPDE even when $\rho_3 = 0$, comes from the fact that the unbounded SPDE coefficients have different growth rates. In this paper we show that the above problems can be circumvented when the vol-of-vol (volatility of the volatility) function $q$ is sufficiently bounded with fast-decaying derivatives. %This setting includes the Ornstein-Uhlenbeck model which is obtained when $q$ is constant and which is widely used in stochastic volatility modeling, making this study necessary. 

The significance of this work to the existing literature on structural large credit portfolio models is to incorporate correlation and obtain uniqueness of solutions for a class of mean-reverting stochastic
volatility models which includes the Ornstein-Uhlenbeck volatility model (the case when $q$ is a constant function). 
Also, our results are applicable to a class of effective approximations to a CIR model, with $q$ being, for example, 
equal to the square root function in a large compact subinterval of $\left(0, \, +\infty\right)$. A different approach 
to dealing with the issues arising for each stochastic volatility model in this setting is a fast mean-reverting
volatility asymptotic analysis which is the subject of \cite{NK18}. 

The results we obtain are interesting from a mathematical point of view as well. The convergence results for the empirical measure and the existence of a regular density satisfying the stochastic initial-boundary problem (Sections 2,3,4 and 5) are adaptations of the CIR volatility case treated in \cite{HK17,ERR}. However, in order to cope with the correlated noises, we need to extend \cite[Theorem 4.1]{HK17} to the case when the volatility process $\{\sigma_t\}_{t \in \left[0, \, T\right]}$ depends on the systemic noise $W^0$ driving the values of the portfolio assets. Moreover, our stochastic initial-boundary value problem has the interesting property that in order to establish higher regularity and then uniqueness of solutions, we need to impose precisely the regularity and weighted integrability conditions which can be established for the density of the empirical measure limit. Finally and most importantly, we are able to establish the uniqueness of solutions to our problem. Indeed, existing works on SPDEs in half-spaces with boundary conditions (e.g \cite{Krylov}) do not consider equations with unbounded coefficients like the $k_1(\theta_1 - y)$ term in \eqref{spdeweak1}, while the nature of our problem forces us to work in an intersection of weighted Sobolev spaces with special features. In particular, because of a weight vanishing at $x = 0$, the $L^2$ norm we use for the first order derivative in $x$ is weaker than the standard $L^2$ norm, and an alternative to the use of the standard trace operator is needed to define the boundary condition. A key for establishing the uniqueness result is a change in the probability measure which eliminates a term which we would be unable to control otherwise.

The structure of the paper is as follows: In Section 2, we discuss the asymptotic behaviour of the empirical measure. Next, in Section 3, we state the result on the existence of a density for the volatility given the systemic noise $B^0$, which includes also the existence of the required bounds for that density. In Section 4 we explain the improvement of \cite[Theorem 4.1]{HK17}, and then we derive the full two-dimensional density satisfying the SPDE and the boundary condition. Section 5 is devoted to obtaining higher regularity for solutions to our stochastic initial-boundary value problem along with some related lemmas, and in Section 6 we establish the crucial uniqueness of solutions. Finally, we have an appendix where we prove the result of Sections 2 and 3 along with the lemmas of 
Section~5.

\section{Convergence of the empirical measure processes and the limiting SPDE}

In order to state our convergence results, we need to describe our set up in more detail first. Let $(\Omega,\,\mathcal{F},\,\{\mathcal{F}_{t}\}_{t\geq0},\,\mathbb{P})$
be a complete filtered probability space, which can be decomposed as a product of three independent probability spaces representing the three different sources of randomness as follows
\begin{equation*}
(\Omega,\,\mathcal{F},\,\{\mathcal{F}_{t}\}_{t\geq0},\,\mathbb{P}) = (\Omega^{0}\times \Omega^{1} \times \Omega^{C},\,\mathcal{F}^{0} \otimes \mathcal{F}^{1} \otimes \mathcal{F}^{C}, \,\{\mathcal{F}^{0}_{t} \otimes \mathcal{F}^{1}_{t} \otimes \mathcal{F}^{C} \}_{t\geq0},\,\mathbb{P}^{0}\times \mathbb{P}^{1} \times \mathbb{P}^{C}). 
\end{equation*}
% \begin{eqnarray}
% && (\Omega,\,\mathcal{F},\,\{\mathcal{F}_{t}\}_{t\geq0},\,\mathbb{P}) \nonumber \\
% && \qquad = (\Omega^{0}\times \Omega^{1} \times \Omega^{C},\,\mathcal{F}^{0} \otimes \mathcal{F}^{1} \otimes \mathcal{F}^{C}, \,\{\mathcal{F}^{0}_{t} \otimes \mathcal{F}^{1}_{t} \otimes \mathcal{F}^{C} \}_{t\geq0},\,\mathbb{P}^{0}\times \mathbb{P}^{1} \times \mathbb{P}^{C}), \nonumber \\
% \end{eqnarray}
The standard Brownian motions $W^{0}$ and $B^{0}$ are defined on the complete filtered probability space $(\Omega^{0},\,\mathcal{F}^{0},\,\{\mathcal{F}^{0}_{t}\}_{t\geq0},\,\mathbb{P}^{0})$ with $\mathcal{F}^{0}_{t}$ generated by $W^{0}$, $B^{0}$ and the initial data of the problem. The standard Brownian motions $W^i$ and $B^i$ for $i \geq 1$ are defined on the complete filtered probability space $(\Omega^{1},\,\mathcal{F}^{1},\,\{\mathcal{F}^{1}_{t}\}_{t\geq0},\,\mathbb{P}^{1})$ with $\mathcal{F}^{1}_{t}$ generated by $\{W^{1},\,B^{1},\,W^{2},\,B^{2},\,\ldots\}$ and the initial data of the problem. The coefficient vectors $C_{i}=\left(k_{i},\,\theta_{i},\,\xi_{i},\,r_{i},\,\rho_{1,i},\,\rho_{2,i}\right)$
for $i\in\mathbb{N}$ are defined on the complete probability space $(\Omega^{C},\,\mathcal{F}^{C},\,\mathbb{P}^{C})$ such that $\mathbb{P}^{C}$- almost surely we have $\rho_{1,i},\,\rho_{2,i} \in (-1, 1)$, $\forall i\in\mathbb{N}$. We assume that the Brownian motions $W^{0}$ and $B^{0}$ are generally correlated with $dW^0_t \cdot dB^0_t = \rho_3dt$, while the random sequences $\{W^{1},\,B^{1},\,W^{2},\,B^{2},\,\ldots\}$ and $\{C_{1}, \, C_2, \, \ldots\}$ are assumed to consist of independent terms. Finally, we assume that the initial values $\left\{x^{1},\,\sigma^{1},\,x^{2},\,\sigma^{2},\,\ldots\right\}$ are measurable with respect to the $\sigma$-algebra $\mathcal{F}_{0} = \mathcal{F}^{0}_{0} \otimes \mathcal{F}^{1}_{0} \otimes \mathcal{F}^{C}$ and independent from the Brownian motions $\{W^{0},\,B^{0},\,W^{1},\,B^{1},\,W^{2},\,B^{2},\,\ldots\}$, and they form an exchangeable sequence of two-dimensional random variables $\{(x^{i},\,\sigma^{i})\}_{i=1}^{\infty}$. The exchangeability condition implies that there exists a $\sigma$-algebra $\mathcal{G}$ contained in $\mathcal{F}_{0}$
such that the above two-dimensional initial values are i.i.d given $\mathcal{G}$ (see, for example, \cite{Aldous}) and, without loss of generality, we may assume that $\mathcal{G} = \mathcal{G}^{0} \otimes \{\emptyset, \, \Omega^{1} \} \otimes \{\emptyset, \, \Omega^{C} \}$, for some $\mathcal{G}^{0} \subset \mathcal{F}^{0}_{0}$. This is clear as we can take $\Omega^{0}$ large enough to make the initial vectors measurable with respect to $\mathcal{F}^{0}_{0} \otimes \{\emptyset, \, \Omega^{1} \} \otimes \{\emptyset, \, \Omega^{1} \}$, but allowing for some dependence on $\Omega^{1} \times \Omega^{C}$ by defining them as two-dimensional random variables with respect to the larger $\sigma$-algebra provides some consistency if we decide to restart our system at some $t > 0$.

Under the above assumptions and for each $N\in\mathbb{N}$, we consider the interacting particle system described by equations \eqref{eq:model}
and the corresponding empirical measure processes $v^{N}$ and
$v_{1}^{N}$, which are defined by equations \eqref{eq:1.5} and \eqref{eq:1.6} respectively. We also define the process $v_2^N$ by $v_{2,t}^{N} := v_{t}^{N}-v_{1,t}^{N}$, the restriction of $v_{t}^{N}$
to $\{0\}\times\mathbb{R}$, for all $t\geq0$. These quantities are also defined in \cite{HK17} for the CIR volatility case and it is not hard to check that all the convergence theorem proofs in \cite[Section 2]{HK17} do not depend on the form of the SDE satisfied by the volatility processes, as long as these processes are continuous, strong solutions to that SDE. Moreover, the $\sigma$-algebra $\mathcal{G}$ there plays the role of the $\sigma$-algebra $\mathcal{G}$ here, which can be decomposed as $\mathcal{G} = \mathcal{G}^{0} \otimes \{\emptyset, \, \Omega^{1} \} \otimes \{\emptyset, \, \Omega^{C} \}$. Therefore, working given $\mathcal{G}$ is the same as working given $\mathcal{G}^0$, and under the general stochastic volatility setting we can have the convergence results of in \cite[Section 2]{HK17} as follows:

\begin{thm}\label{thm:2.1}
For each $N\in\mathbb{N}$ and any $t,\,s\geq0$, consider the random measure given by
\[
v_{3,t,s}^{N}=\frac{1}{N}\sum_{i=1}^{N}\delta_{X_{t}^{i},\sigma_{t}^{i},\sigma_{s}^{i}}.
\]
The sequence $v_{3,t,s}^{N}$ of three-dimensional empirical measures
converges weakly to some measure $v_{3,t,s}$ for all $t,\,s\geq0$,
$\mathbb{P}$-almost surely. Moreover, the measure-valued process
$\{v_{3,t,s}:\,t,\,s\geq0\}$ is $\mathbb{P}$-almost surely continuous
in both $t$ and $s$ under the weak topology.
\end{thm}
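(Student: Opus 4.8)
The plan is to exploit the conditional independence structure of the particle system and reduce the statement to a conditional strong law of large numbers together with an equicontinuity estimate in the time variables. First I would observe that, conditionally on the common Brownian motions $W^0, B^0$ and the $\sigma$-algebra $\mathcal{G}$ carrying the initial data, the pairs of paths $\left(X^i, \sigma^i\right)_{i \geq 1}$ are i.i.d.: once $W^0, B^0$ and the data $\left(x^i, \sigma^i, C_i\right)$ are fixed, the evolution of the $i$-th particle in \eqref{eq:model} is driven only by its idiosyncratic pair $\left(W^i, B^i\right)$, and these, together with the $C_i$, are mutually independent across $i$. Consequently the natural candidate for the limit is the conditional law
\[
v_{3,t,s} := \mathbb{P}\left(\left(X_t^1, \sigma_t^1, \sigma_s^1\right) \in \cdot \,\middle|\, W_\cdot^0, B_\cdot^0, \mathcal{G}\right),
\]
which is well defined since the coordinate processes are continuous strong solutions of the system.

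Next, for a fixed pair $(t,s)$ and a fixed bounded continuous test function $\phi$ on $[0,\infty) \times \mathbb{R}^2$, the averages $\langle v_{3,t,s}^N, \phi\rangle = N^{-1}\sum_{i=1}^N \phi(X_t^i, \sigma_t^i, \sigma_s^i)$ converge almost surely to $\langle v_{3,t,s}, \phi\rangle$ by the conditional strong law of large numbers applied to the conditionally i.i.d.\ summands. Choosing a countable convergence-determining family of such $\phi$ and intersecting the corresponding null sets upgrades this to almost sure weak convergence $v_{3,t,s}^N \Rightarrow v_{3,t,s}$ for each fixed $(t,s)$; running the same argument over a countable dense set $D \subset [0,\infty)^2$ then produces a single almost sure event on which convergence holds simultaneously for all $(t,s) \in D$.

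To pass from $D$ to all $(t,s)$ and to obtain continuity of the limit, I would establish a modulus-of-continuity estimate that is uniform in $N$. For Lipschitz $\phi$,
\[
\left|\langle v_{3,t,s}^N - v_{3,t',s'}^N, \phi\rangle\right| \leq \frac{\|\phi\|_{\mathrm{Lip}}}{N}\sum_{i=1}^N\left(|X_t^i - X_{t'}^i| + |\sigma_t^i - \sigma_{t'}^i| + |\sigma_s^i - \sigma_{s'}^i|\right),
\]
and the right-hand side is controlled using standard increment moment bounds for the solutions of \eqref{eq:model} — for instance $\mathbb{E}|\sigma_t^i - \sigma_{t'}^i|^2 \leq C|t-t'|$ via the Burkholder--Davis--Gundy inequality — which hold uniformly in $i$ because the $C_i$ are i.i.d.\ and the coefficients are regular, while the absorbed coordinate remains continuous under the convention $X_t^i = 0$ for $t > T_i$. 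A Kolmogorov-type maximal inequality then yields almost sure equicontinuity of the maps $(t,s) \mapsto v_{3,t,s}^N$ in the bounded-Lipschitz metric, uniformly in $N$. Combining this equicontinuity with the dense convergence on $D$ by an Arzel\`a--Ascoli-type argument extends the convergence to every $(t,s)$ on a single null set and transfers continuity to the limit.

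The main obstacle I anticipate is precisely this last step: controlling the increments of the empirical measure uniformly in $N$ while respecting the absorption at the default boundary, and arranging the null sets so that a single almost sure event supports convergence for all $(t,s)$ rather than only for each fixed pair. Since the construction of $v_{3,t,s}$ as a conditional law and the conditional i.i.d.\ structure do not depend on the specific form of the volatility SDE — only on the existence of continuous strong solutions — the argument of Section~2 of \cite{HK17} applies essentially verbatim once the increment bounds above are verified for the general mean-reverting dynamics of \eqref{eq:model}.
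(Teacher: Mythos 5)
Your overall architecture — conditional i.i.d.\ structure given $\left(W_{\cdot}^{0},\,B_{\cdot}^{0},\,\mathcal{G}\right)$, a conditional strong law over a countable convergence-determining family, a countable dense set of times, and an extension step — matches the strategy the paper inherits from Section~2 of \cite{HK17} (the paper itself gives no proof here; it observes that the arguments of \cite{HK17}, built on the exchangeability framework of \cite{Aldous}, carry over verbatim because they use nothing about the volatility SDE beyond the existence of continuous strong solutions). The genuine gap is in your equicontinuity step. Your bound $\mathbb{E}\left|\sigma_{t}^{i}-\sigma_{t'}^{i}\right|^{2}\leq C\left|t-t'\right|$ with a deterministic constant $C$ uniform in $i$ requires integrability of the random coefficients $C_{i}=(k_{i},\theta_{i},\xi_{i},\dots)$ and growth control on $q$ (and on $h$ for the $X$-increments) that the theorem does not assume: in Section~2 the $C_i$ are merely i.i.d.\ with $\rho_{1,i},\rho_{2,i}\in(-1,1)$, and Assumption~\ref{ass1} on $q$ is only introduced in Section~3. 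If, say, $\xi_{1}$ has no second moment, your BDG estimate fails outright, so your proof establishes a strictly weaker statement than the one claimed. The fix used in \cite{HK17} avoids moments entirely: apply the same exchangeable/backward-martingale law of large numbers not only to $f(X_{t}^{i},\sigma_{t}^{i},\sigma_{s}^{i})$ but to truncated path functionals such as $\min\bigl(1,\,\sup_{|r-t'|\leq\delta}\left|\sigma_{r}^{i}-\sigma_{t'}^{i}\right|\bigr)$; their empirical averages converge a.s.\ to conditional expectations, which tend to zero as $\delta\to0^{+}$ by conditional dominated convergence, using only a.s.\ continuity of the particle paths (including the absorbed coordinate, as you correctly note). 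This yields the uniform-in-$N$ modulus on a single null set with no hypotheses beyond continuous strong solvability, which is exactly why the paper can state the result at this level of generality.

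A secondary soft spot: your appeal to a ``Kolmogorov-type maximal inequality'' for uniform-in-$N$ equicontinuity is not routine, because the increment $\langle v_{3,t,s}^{N}-v_{3,t',s'}^{N},\phi\rangle$ is an \emph{average} of order-one terms of typical size $|t-t'|^{1/2}$ — its mean does not shrink with $N$ — so Kolmogorov's criterion gives random modulus constants for each $N$ whose tightness across $N$ still has to be argued (e.g.\ by chaining with high moments, again demanding coefficient integrability). The truncated-modulus route above sidesteps this as well. With that replacement your proposal becomes essentially the paper's argument; as written, it proves the theorem only under additional unstated moment and growth assumptions.
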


\begin{cor}\label{cor:2.2}
The sequence $v_{t}^{N}$ of two-dimensional empirical measures given
by \eqref{eq:1.5} converges weakly to some measure $v_{t}$ for all $t\geq0$,
$\mathbb{P}$-almost surely. Moreover, the path $\{v_{t}:\,t\geq0\}$
is $\mathbb{P}$-almost surely continuous under the weak topology.
The measure-valued process $v_{t}$ is the restriction of
$v_{3,t,s}$ to the space of functions which are constant in the third
variable, for any $t\geq0$.
\end{cor}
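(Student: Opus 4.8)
The plan is to deduce the entire statement directly from Theorem~\ref{thm:2.1} by observing that $v_t^N$ is simply the image of the three-dimensional measure $v_{3,t,s}^N$ under the projection that forgets the third coordinate. Concretely, let $\pi:\mathbb{R}\times\mathbb{R}\times\mathbb{R}\to\mathbb{R}\times\mathbb{R}$ be given by $\pi(x,y,z)=(x,y)$. For any bounded continuous $f:\mathbb{R}^2\to\mathbb{R}$ the lifted function $f\circ\pi$ is bounded and continuous on $\mathbb{R}^3$ and is constant in the third variable, and
\begin{equation*}
\int_{\mathbb{R}^3}(f\circ\pi)\,dv_{3,t,s}^{N}=\frac{1}{N}\sum_{i=1}^{N}f\big(X_t^i,\sigma_t^i\big)=\int_{\mathbb{R}^2}f\,dv_t^{N},
\end{equation*}
so that $v_t^{N}=\pi_{*}v_{3,t,s}^{N}$ is the pushforward of $v_{3,t,s}^{N}$ under $\pi$, for every $s\geq0$.

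First I would fix the single $\mathbb{P}$-almost sure event on which the conclusions of Theorem~\ref{thm:2.1} hold, namely on which $v_{3,t,s}^{N}\to v_{3,t,s}$ weakly for all $t,s\geq0$ and $(t,s)\mapsto v_{3,t,s}$ is weakly continuous. On this event I define, for each $t\geq0$, the measure $v_t:=\pi_{*}v_{3,t,0}$ and claim $v_t^N\to v_t$ weakly. Indeed, applying the displayed identity and letting $N\to\infty$ gives, for every $f\in C_b(\mathbb{R}^2)$,
\begin{equation*}
\int_{\mathbb{R}^2}f\,dv_t^{N}=\int_{\mathbb{R}^3}(f\circ\pi)\,dv_{3,t,0}^{N}\longrightarrow\int_{\mathbb{R}^3}(f\circ\pi)\,dv_{3,t,0}=\int_{\mathbb{R}^2}f\,dv_t,
\end{equation*}
since $f\circ\pi$ is a legitimate bounded continuous test function for the three-dimensional weak convergence. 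As the chosen event is independent of $t$, this establishes $v_t^N\to v_t$ weakly for all $t\geq0$ simultaneously.

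Next I would verify that the limit does not depend on the auxiliary choice $s=0$ and that it agrees with the restriction described in the statement. Because $\pi_{*}v_{3,t,s}^{N}=v_t^{N}$ is independent of $s$, repeating the same passage to the limit shows $\pi_{*}v_{3,t,s}=v_t$ for every $s\geq0$; equivalently, $v_t$ is the restriction of $v_{3,t,s}$ to functions constant in the third variable, exactly as claimed. For the almost sure continuity of $t\mapsto v_t$ I would transfer the continuity of $(t,s)\mapsto v_{3,t,s}$: for fixed $f\in C_b(\mathbb{R}^2)$ the map $t\mapsto\int f\,dv_t=\int(f\circ\pi)\,dv_{3,t,0}$ is continuous because $(t,s)\mapsto v_{3,t,s}$ is weakly continuous and $f\circ\pi\in C_b(\mathbb{R}^3)$; since this holds for every such $f$, the path $t\mapsto v_t$ is continuous in the weak topology.

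The argument is essentially a soft continuity-of-pushforward statement, so I do not expect a genuine obstacle. The only point requiring care is the logical ordering, namely fixing one $\mathbb{P}$-almost sure event (the one supplied by Theorem~\ref{thm:2.1}) \emph{before} extracting the two-dimensional assertions, so that both the ``for all $t$'' convergence and the continuity hold on the same full-measure set rather than on a $t$-dependent family of null-complements. The other minor check is that the pushforward limit is independent of $s$, which is immediate since $v_t^N$ itself does not involve $s$.
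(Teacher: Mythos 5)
Your proposal is correct and is essentially the argument the paper relies on (via the proofs in \cite{HK17}, which this paper inherits): two-dimensional test functions are lifted to functions on $\mathbb{R}^3$ constant in the third variable, so $v_t^N$ is the pushforward of $v_{3,t,s}^N$ under the projection, and both the almost-sure weak convergence on the single full-measure event from Theorem~\ref{thm:2.1} and the weak continuity of $t\mapsto v_t$ transfer directly. Your care about fixing one common null set and checking $s$-independence matches the intended reasoning.
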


\begin{thm}\label{thm:2.3}
There exists an $\Omega'\subset\Omega$ with $\mathbb{P}(\Omega')=1$
such that for any $\omega\in\Omega'$, we have $\int_{\mathbb{R}^{3}}f dv_{3,t,s}=\mathbb{E}\left[f\left(X_{t}^{1},\,\sigma_{t}^{1},\,\sigma_{s}^{1}\right)|\,W^{0},\,B^{0},\,\mathcal{G}^0\right]$
for any $t,\,s\geq0$ and any $f\in C_{b}(\mathbb{R}^{3};\,\mathbb{R})$.
\end{thm}

\begin{cor}\label{cor:2.4}
Let $\{v_{t}:\,t\geq0 \}$ be the measure-valued process defined in Corollary~\ref{cor:2.2}. 
There exists an $\Omega'\subset\Omega$ with $\mathbb{P}(\Omega')=1$
such that for any $\omega\in\Omega'$, we have $\int_{\mathbb{R}^{2}}f dv_{t}=\mathbb{E}\left[f\left(X_{t}^{1},\,\sigma_{t}^{1}\right)|\,W^{0},\,B^{0},\,\mathcal{G}^0\right]$
for any $t\geq0$ and for any $f\in C_{b}\left(\mathbb{R}^{3};\,\mathbb{R}\right)$.
\end{cor}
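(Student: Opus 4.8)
The plan is to derive this statement as an immediate consequence of Theorem~\ref{thm:2.3} together with the identification of $v_t$ furnished by Corollary~\ref{cor:2.2}. The governing idea is that a bounded continuous function of two variables lifts canonically to a bounded continuous function of three variables that is constant in its third coordinate, and Theorem~\ref{thm:2.3} already controls \emph{all} such test functions on a single event of full probability. Here $f$ is understood as an element of $C_b(\mathbb{R}^2;\mathbb{R})$, consistent with $v_t$ being a measure on $\mathbb{R}^2$ and with the right-hand side $\mathbb{E}[f(X_t^1,\sigma_t^1)\mid\cdots]$ depending only on two arguments.

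First I would fix the full-probability set $\Omega'$ provided by Theorem~\ref{thm:2.3}, on which the identity $\int_{\mathbb{R}^3} g\,dv_{3,t,s} = \mathbb{E}[g(X_t^1,\sigma_t^1,\sigma_s^1)\mid W_\cdot^0,B_\cdot^0,\mathcal{G}^0]$ holds simultaneously for all $t,s\geq 0$ and all $g\in C_b(\mathbb{R}^3;\mathbb{R})$. Given $f\in C_b(\mathbb{R}^2;\mathbb{R})$, I would define the lift $\tilde{f}(x,y,z):=f(x,y)$, which clearly belongs to $C_b(\mathbb{R}^3;\mathbb{R})$ and is constant in its third argument, so that $f$ falls within the class covered by Theorem~\ref{thm:2.3}.

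Next I would invoke Corollary~\ref{cor:2.2}, which identifies $v_t$ as the restriction of $v_{3,t,s}$ to functions that are constant in the third variable. At the empirical level this reads $\int_{\mathbb{R}^3}\tilde{f}\,dv_{3,t,s}^N = \tfrac{1}{N}\sum_{i=1}^N f(X_t^i,\sigma_t^i) = \int_{\mathbb{R}^2} f\,dv_t^N$, and passing to the weak limit (valid on $\Omega'$ by Theorem~\ref{thm:2.1} and Corollary~\ref{cor:2.2}) gives $\int_{\mathbb{R}^3}\tilde{f}\,dv_{3,t,s} = \int_{\mathbb{R}^2} f\,dv_t$ for every $s\geq 0$. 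Applying Theorem~\ref{thm:2.3} to $g=\tilde{f}$ and using that $\tilde{f}$ does not depend on its third argument then yields
\[
\int_{\mathbb{R}^2} f\,dv_t = \mathbb{E}\left[\tilde{f}(X_t^1,\sigma_t^1,\sigma_s^1)\mid W_\cdot^0,B_\cdot^0,\mathcal{G}^0\right] = \mathbb{E}\left[f(X_t^1,\sigma_t^1)\mid W_\cdot^0,B_\cdot^0,\mathcal{G}^0\right],
\]
which is exactly the asserted identity.

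Since Theorem~\ref{thm:2.3} already supplies a single event $\Omega'$ on which the three-dimensional identity holds for all test functions and all times, no hard analytic step remains. The only points requiring a little care are bookkeeping in nature: confirming that the lift $\tilde{f}$ genuinely lands in the class handled by Theorem~\ref{thm:2.3}, that the restriction relation of Corollary~\ref{cor:2.2} is truly independent of the auxiliary time $s$ (immediate from the empirical-measure computation above, where $s$ does not appear), and that the null sets are not enlarged as $f$ ranges over $C_b(\mathbb{R}^2;\mathbb{R})$. All of these are inherited directly from Theorem~\ref{thm:2.3}, so I expect the main obstacle to be only this modest verification rather than any substantive probabilistic estimate.
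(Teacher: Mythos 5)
Your proposal is correct and follows exactly the route the paper intends: the paper gives no separate argument for Corollary~\ref{cor:2.4}, deferring to the Section~2 proofs of \cite{HK17}, where this corollary is obtained precisely by lifting $f\in C_b(\mathbb{R}^2;\mathbb{R})$ to a function of three variables constant in the third coordinate, applying Theorem~\ref{thm:2.3} on its full-probability event, and identifying $\int_{\mathbb{R}^2}f\,dv_t$ with $\int_{\mathbb{R}^3}\tilde{f}\,dv_{3,t,s}$ via the restriction relation of Corollary~\ref{cor:2.2}. You also correctly read the $C_b(\mathbb{R}^3;\mathbb{R})$ in the statement as a typo for $C_b(\mathbb{R}^2;\mathbb{R})$, and your null-set bookkeeping (a single $\Omega'$, uniform over $t$ and $f$) is exactly what the cited theorems supply.
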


\begin{thm}\label{thm:2.5}
There exists a measure-valued process $\{v_{2,t}:\,t\geq0\}$
and an $\Omega''\subset\Omega'$ with $\mathbb{P}(\Omega'')=1$,
such that for any $\omega\in\Omega''$ we have that $v_{2,t}^{N}\xrightarrow{N \to +\infty} v_{2,t}$
weakly for all $t\geq0$. Moreover, we have $\int_{\mathbb{R}^{2}}f dv_{2,t}=\mathbb{E}\left[f\left(X_{t}^{1},\,\sigma_{t}^{1}\right)\mathbb{I}_{\{T_{1}<t\}}|\,W^{0},\,B^{0},\,\mathcal{G}^0\right]$
for all $t\geq0$ and for all $f\in C_{b}\left(\mathbb{R}^{2};\,\mathbb{R}\right)$.
\end{thm}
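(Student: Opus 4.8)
The plan is to build the limit $v_{2,t}$ from the already-established limit $v_t$ of Corollary~\ref{cor:2.2} by decomposing $v_t^N$ according to whether a particle has defaulted. Since $X_t^i\ge 0$ for all $i$ and a particle sits exactly on the boundary if and only if it has defaulted, i.e. $\{X_t^i=0\}=\{T_i\le t\}$, each empirical measure splits as $v_t^N=v_{1,t}^N+v_{2,t}^N$, where $v_{1,t}^N$ is the restriction of $v_t^N$ to the open half-plane $(0,\infty)\times\mathbb{R}$ and $v_{2,t}^N$ its restriction to the boundary line $\{0\}\times\mathbb{R}$. I would define the candidate limit $v_{2,t}$ to be the restriction of $v_t$ to $\{0\}\times\mathbb{R}$. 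Because each $v_{2,t}^N$ is supported on the closed line $\{0\}\times\mathbb{R}$, so is any weak subsequential limit, which is consistent with this choice.

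For the representation I would first upgrade Corollary~\ref{cor:2.4}: since $v_t$ is a regular conditional law of $(X_t^1,\sigma_t^1)$ given $W_\cdot^0,B_\cdot^0,\mathcal{G}^0$, the identity $\int g\,dv_t=\mathbb{E}[g(X_t^1,\sigma_t^1)\mid W_\cdot^0,B_\cdot^0,\mathcal{G}^0]$ extends from $g\in C_b$ to bounded measurable $g$ by a monotone-class argument (applied to a countable determining family and to the indicator of the boundary). Taking $g(x,y)=f(x,y)\mathbb{I}_{\{x=0\}}$ then gives $\int f\,dv_{2,t}=\mathbb{E}[f(0,\sigma_t^1)\mathbb{I}_{\{T_1\le t\}}\mid W_\cdot^0,B_\cdot^0,\mathcal{G}^0]$, and since $X_t^1=0$ on $\{T_1\le t\}$ this equals $\mathbb{E}[f(X_t^1,\sigma_t^1)\mathbb{I}_{\{T_1\le t\}}\mid\cdots]$. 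To replace $\mathbb{I}_{\{T_1\le t\}}$ by $\mathbb{I}_{\{T_1<t\}}$ as in the statement I would prove that the conditional law of the hitting time has no atom at the fixed time $t$, i.e. $\mathbb{P}(T_1=t\mid W_\cdot^0,B_\cdot^0,\mathcal{G}^0)=0$; this follows from the non-degeneracy of the diffusion coefficient $h(\sigma_t^1)$ of $X^1$ near the boundary, which is inherited unchanged from the distance-to-default dynamics and hence carries over from the analysis in \cite{HK17}.

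For the weak convergence, one inequality is soft. For $0\le f\in C_b(\mathbb{R}^2)$ and a cutoff $\chi_\epsilon$ with $\chi_\epsilon(0)=1$, $0\le\chi_\epsilon\le 1$ and $\chi_\epsilon\equiv 0$ on $[\epsilon,\infty)$, dropping the nonnegative survivor terms gives $\int f\,dv_{2,t}^N\le\int f\chi_\epsilon\,dv_t^N$; letting $N\to\infty$ (weak convergence of $v_t^N$) and then $\epsilon\to0$ (with $\chi_\epsilon\to\mathbb{I}_{\{x=0\}}$ pointwise and dominated convergence) yields $\limsup_N\int f\,dv_{2,t}^N\le\int f\,dv_{2,t}$. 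The matching lower bound is where the real difficulty lies: splitting the same integral, $\int f\,dv_{2,t}^N\ge\int f\chi_\epsilon\,dv_t^N-\|f\|_\infty\,v_{1,t}^N\big((0,\epsilon)\times\mathbb{R}\big)$, so I would need the no-boundary-leakage estimate $\lim_{\epsilon\to0}\limsup_N v_{1,t}^N\big((0,\epsilon)\times\mathbb{R}\big)=0$, i.e. that surviving particles do not accumulate on the default boundary in the limit. Granting this, letting $N\to\infty$ then $\epsilon\to0$ gives $\liminf_N\int f\,dv_{2,t}^N\ge\int f\,dv_{2,t}$, the two bounds force $\int f\,dv_{2,t}^N\to\int f\,dv_{2,t}$, and general signed $f$ follows by linearity.

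The hard part is therefore the no-leakage estimate, and I expect it to be the main obstacle, since it cannot be obtained from the portmanteau lemma alone: the line $\{0\}\times\mathbb{R}$ carries positive mass, so the strip $(0,\epsilon)\times\mathbb{R}$ is not a continuity set of $v_t$, and the naive bound would leave an error of the order of the boundary mass. Instead I would control $v_{1,t}^N\big((0,\epsilon)\times\mathbb{R}\big)$ through the survivor probability $\mathbb{P}\big(0<X_t^1<\epsilon,\,T_1>t\mid W_\cdot^0,B_\cdot^0,\mathcal{G}^0\big)$ and show it tends to $0$ uniformly in $N$ as $\epsilon\to0$, using that under survival the law of $X_t^1$ exhibits no concentration at $x=0$, a consequence of the non-degeneracy of $X^1$ near the boundary. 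Since the dynamics of the distance-to-default $X^1$ are identical to those in \cite{HK17} and the Section~2 arguments there do not use the form of the volatility SDE beyond the existence of continuous strong solutions, this estimate and the attendant selection of a common full-measure set $\Omega''\subset\Omega'$ on which the convergence holds for all $t$ and all $f$ in a countable determining class transfer directly, completing the proof.
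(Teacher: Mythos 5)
Your reconstruction cannot be compared line by line against anything in this paper, because the paper contains no self-contained proof of Theorem~\ref{thm:2.5}: it simply asserts that the proofs of Section~2 of \cite{HK17} carry over verbatim, since they use nothing about the volatility dynamics beyond the $\sigma^i$ being continuous strong solutions. That said, your outline has exactly the shape of the argument in the cited reference: absorption gives $\{X_t^i=0\}=\{T_i\le t\}$, so $v_{2,t}^N$ is the boundary restriction of $v_t^N$; the candidate limit is the boundary restriction of $v_t$; the sandwich with cutoffs $\chi_\epsilon$ reduces everything to the no-leakage estimate $\lim_{\epsilon\to0}\limsup_N v_{1,t}^N\left((0,\epsilon)\times\mathbb{R}\right)=0$; and your closing observation that none of this depends on the form of the volatility SDE is precisely the paper's own justification for quoting the result. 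Your fixed-$t$ algebra (the upper bound via portmanteau on $f\chi_\epsilon$, the lower bound modulo the leakage term, the monotone-class upgrade of Corollary~\ref{cor:2.4} to bounded measurable integrands) is correct.

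Two points deserve flagging. First, at a fixed $t$ the no-leakage estimate is softer than you suggest: given $\left(W_{\cdot}^{0},B_{\cdot}^{0},\mathcal{G}^0\right)$ the particles are conditionally i.i.d., so the conditional strong law applied to the indicators $\mathbb{I}_{\{0<X_t^i<\epsilon_k\}}$ along a countable sequence $\epsilon_k\downarrow0$ gives, almost surely, $v_{1,t}^N\left((0,\epsilon_k)\times\mathbb{R}\right)\to\mathbb{P}\left(0<X_t^1<\epsilon_k,\,T_1>t\mid W_{\cdot}^{0},B_{\cdot}^{0},\mathcal{G}^0\right)$, and the right-hand side decreases to $0$ by continuity of measure; no density bound or non-degeneracy of $h$ is needed at a single time (monotonicity in $\epsilon$ then handles the double limit). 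Where non-degeneracy genuinely enters, and where your proposal is thinnest, is the claim that a single full-measure set $\Omega''$ works for \emph{all} $t\ge0$ simultaneously, together with the representation carrying the strict inequality $T_1<t$. Your fixed-$t$ arguments produce null sets depending on $t$, and your passage from $\mathbb{I}_{\{T_1\le t\}}$ to $\mathbb{I}_{\{T_1<t\}}$ is soft for fixed $t$ (since $\mathbb{P}(T_1=t)=0$ implies $\mathbb{P}(T_1=t\mid\cdot)=0$ a.s.), but the theorem as stated needs the conditional law of $T_1$ to be a.s.\ atomless at \emph{every} $t$ at once — equivalently, continuity of the limiting loss process — which is strictly stronger and requires combining monotonicity in $t$ of the absorbed mass, the weak continuity of $t\mapsto v_t$ from Corollary~\ref{cor:2.2}, and a near-boundary estimate uniform in $t$ of the quantitative kind you sketch. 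You delegate all of this to \cite{HK17} with ``transfers directly,'' which is defensible (it is all the present paper does), but be aware that this clause is carrying the hardest part of the theorem, and that the non-degeneracy of $h$ you invoke is a standing assumption imported from \cite{HK17} rather than anything stated in Section~2 here (in this paper $h$ is only assumed bounded away from $0$ and $\infty$ later, in Theorem~\ref{thm:4.2}).
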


By Corollary~\ref{cor:2.2} and Theorem~\ref{thm:2.5} we have the weak convergence 
$v_{1,t}^{N}=v_{t}^{N}-v_{2,t}^{N}\to v_{t}-v_{2,t}=:v_{1,t}$, which holds for all $t\geq0$, $\mathbb{P}$-almost surely. Moreover, by Corollary~\ref{cor:2.4} and Theorem~\ref{thm:2.5} we can write
\begin{eqnarray*}
\int_{\mathbb{R}^{2}}f dv_{1,t} &=& \mathbb{E}\left[f\left(X_{t}^{1},\,\sigma_{t}^{1}\right)\mathbb{I}_{\{T_{1}>t\}}\,|\,W^{0},\,B^{0},\,\mathcal{G}^0\right]  \\
&=& \mathbb{E}\left[\mathbb{E}\left[f\left(X_{t}^{1},\,\sigma_{t}^{1}\right)\mathbb{I}_{\{T_{1}>t\}}\,|\,W^{0},\,B^{0},\,C_{1},\,\mathcal{G}^0\right]\,|\,W^{0},\,B^{0},\,\mathcal{G}^0\right],
\end{eqnarray*}
for any $f\in C_{b}\left(\mathbb{R}^{2};\,\mathbb{R}\right)$ and
$t\geq0$, $\mathbb{P}$-almost surely. Note also that conditional expectations given $\left(W^{0},\,B^{0},\,\mathcal{G}^0\right)$ are defined on the component probability space $(\Omega^{0},\,\mathcal{F}^{0},\,\{\mathcal{F}^{0}_{t}\}_{t\geq0},\,\mathbb{P}^{0})$, while conditional expectations given $\left(W^{0},\,B^{0},\,C_{1},\,\mathcal{G}^0\right)$ are defined on the product space $(\Omega^{0},\,\mathcal{F}^{0},\,\{\mathcal{F}^{0}_{t}\}_{t\geq0},\,\mathbb{P}^{0}) \times (\Omega^{C},\,\mathcal{F}^{C},\,\mathbb{P}^{C})$. Therefore, the next step is to study the process of measures $v_{t,C_{1}}(\cdot)$ defined as
\[
v_{t,C_{1}}\left(\cdot \right)=\mathbb{P}\left(\left(X_{t}^{1},\sigma_{t}^{1}\right) \in \cdot , T_{1}>t \,|W^{0},B^{0},C_{1},\mathcal{G}^0\right),
\]
on the product space $(\Omega^{0},\,\mathcal{F}^{0},\,\{\mathcal{F}^{0}_{t}\}_{t\geq0},\,\mathbb{P}^{0}) \times (\Omega^{C},\,\mathcal{F}^{C},\,\mathbb{P}^{C})$. However, by fixing $\omega^C \in \Omega^C$ and studying $v_{t,C}(\cdot)$ for $C = C_1\left(\omega^C\right)$, we reduce this problem to the case where all coefficient vectors are equal to the deterministic vector $C = \left(k,\,\theta,\,\xi,\,r,\,\rho_{1},\,\rho_{2}\right)$. The behaviour of this measure-valued process, which we denote by $v_{t}(\cdot)$ (suppressing $C$ for simplicity), is given in the following Theorem the proof of which follows exactly the same steps as the proof of \cite[Theorem~2.6]{HK17} and can be found in the Appendix (subsection~\ref{pf2.6}). 

\begin{thm}\label{thm:2.6}
Let $A$ be the two-dimensional differential operator
mapping any smooth function $f:\mathbb{R}^{+}\times\mathbb{R}\rightarrow\mathbb{R}$
to
\begin{eqnarray*}
Af\left(x,\,y\right) &=& \left(r-\frac{h^{2}\left(y\right)}{2}\right) f_{x}\left(x,\,y\right)+k\left(\theta-y\right)f_{y}
\left(x,\,y\right)+\frac{1}{2}h^{2}\left(y\right)f_{xx}\left(x,\,y\right) \\
& & \qquad +\frac{1}{2}\xi^{2}q^2(y)f_{yy}\left(x,\,y\right)+\xi\rho_{3}\rho_{1}\rho_{2}h(y)q(y)f_{xy}\left(x,\,y\right)
\end{eqnarray*}
for all $\left(x,\,y\right)\in\mathbb{R}^{+}\times\mathbb{R}$. Then, the measure-valued stochastic process $v_{t}\left(\cdot\right)$ defined on $(\Omega^{0},\,\mathcal{F}^{0},\,\{\mathcal{F}^{0}_{t}\}_{t\geq0},\,\mathbb{P}^{0})$ satisfies the following weak form SPDE
\begin{eqnarray*}
\int_{\mathbb{R}^{2}}f\left(x,\,y\right) dv_{t}\left(x,\,y\right)&=&\int_{\mathbb{R}^{2}}f\left(x,\,y\right) dv_{0}\left(x,\,y\right) \\
& & +\int_{0}^{t}\int_{\mathbb{R}^{2}}Af\left(x,\,y\right) dv_{s}\left(x,\,y\right)ds \\
& & +\rho_{1}\int_{0}^{t}\int_{\mathbb{R}^{2}}h\left(y\right) f_{x}\left(x,\,y\right) dv_{s}\left(x,\,y\right)dW_{s}^{0} \\
& & +\rho_{2}\xi\int_{0}^{t}\int_{\mathbb{R}^{2}}q(y) f_{y}\left(x,\,y\right) dv_{s}\left(x,\,y\right)dB_{s}^{0},
\end{eqnarray*}
for all $t\geq0$ and any $f\in C_{0}^{test}=\left\{ g\in C_{b}^{2}\left(\mathbb{R}^{+}\times\mathbb{R}\right):\,g\left(0,\,y\right)=0,\:\forall\,y\in\mathbb{R}\right\} $.
\end{thm}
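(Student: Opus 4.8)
The plan is to read off the weak SPDE directly from the probabilistic representation $\int_{\mathbb{R}^2} f\, dv_{t,C_1} = \mathbb{E}\left[f(X_t^1, \sigma_t^1)\mathbb{I}_{\{T_1 > t\}} \mid W_\cdot^0, B_\cdot^0, C_1, \mathcal{G}^0\right]$ established just before the statement, by applying It\^o's formula to a single representative particle and then conditioning on the systemic data. Fixing $C_1 = \omega^C$ and working on $(\Omega^0, \mathcal{F}^0, \{\mathcal{F}_t^0\}_{t\geq0}, \mathbb{P}^0)$, I would first apply It\^o's formula to $f(X^1_{t\wedge T_1}, \sigma^1_{t\wedge T_1})$ for $f \in C_0^{test}$. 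The crucial role of the boundary condition already appears here: since $X^1_{T_1} = 0$ and $f(0,y) = 0$, we have $f(X^1_{t\wedge T_1}, \sigma^1_{t\wedge T_1}) = f(X_t^1, \sigma_t^1)\mathbb{I}_{\{T_1 > t\}}$, so that the stopped process reproduces exactly the quantity whose conditional expectation defines $v_{t,C_1}$.

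Next I would compute the It\^o expansion explicitly. The quadratic variations are $d\langle X^1\rangle_s = h^2(\sigma_s^1)\, ds$ and $d\langle \sigma^1\rangle_s = \xi_1^2 q^2(\sigma_s^1)\, ds$, while the cross-variation, using that among the four driving Brownian motions only $W^0$ and $B^0$ are correlated (with $dW^0_s\, dB^0_s = \rho_3\, ds$), reduces to $d\langle X^1, \sigma^1\rangle_s = \rho_3\rho_{1,1}\rho_{2,1}\xi_1 h(\sigma_s^1) q(\sigma_s^1)\, ds$. Collecting the drift together with the three second-order terms yields precisely $Af(X_s^1, \sigma_s^1)\, ds$, while the martingale part splits into four stochastic integrals: two idiosyncratic ones against $dW^1, dB^1$ with integrands $\sqrt{1-\rho_{1,1}^2}\, h f_x$ and $\xi_1\sqrt{1-\rho_{2,1}^2}\, q f_y$, and two systemic ones against $dW^0, dB^0$ with integrands $\rho_{1,1} h f_x$ and $\xi_1\rho_{2,1} q f_y$, all evaluated along the stopped path.

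The heart of the argument is then taking $\mathbb{E}[\,\cdot \mid W_\cdot^0, B_\cdot^0, \mathcal{G}^0]$ of this identity. Since $(W^1, B^1)$ are independent of $(W_\cdot^0, B_\cdot^0, \mathcal{G}^0)$ and remain Brownian motions under this conditioning, the two idiosyncratic integrals are conditional martingales started at zero and hence have vanishing conditional expectation. The two systemic integrals, by contrast, must be preserved, and one pulls the conditional expectation inside them via a conditional stochastic Fubini theorem, replacing each integrand by its conditional expectation, e.g. $\mathbb{E}\left[\rho_{1,1} h(\sigma_s^1) f_x(X_s^1, \sigma_s^1)\mathbb{I}_{\{T_1 > s\}} \mid \cdots\right] = \rho_{1,1}\int_{\mathbb{R}^2} h f_x\, dv_{s,C_1}$, and analogously for the $dB^0$ term. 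The constant term $\mathbb{E}[f(X_0^1, \sigma_0^1) \mid \cdots] = \int_{\mathbb{R}^2} f\, dv_{0,C_1}$ supplies the initial datum, and applying the ordinary conditional Fubini theorem to the $Af$ integral and recognizing each resulting conditional expectation as an integral against $v_{s,C_1}$ then assembles the four terms of the claimed weak SPDE.

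The main obstacle is the rigorous justification of this interchange of conditional expectation and stochastic integration against $W^0$ and $B^0$, together with the vanishing of the idiosyncratic integrals; both rest on the conditional independence structure of the three-factor probability space and on $L^2$ bounds for the integrands, which follow from $f, f_x, f_y \in C_b$ and the regularity assumptions on $h$ and $q$. Since this is exactly the step carried out in the CIR case, I would follow the corresponding argument in \cite{HK17}, verifying only that none of those estimates used the specific form $q(z) = \sqrt{z}$ but merely the existence of continuous strong solutions and the square-integrability of the relevant integrands, which hold here under the stated regularity of $h$ and $q$.
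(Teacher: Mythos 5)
Your proposal is correct and follows essentially the same route as the paper's own proof in the Appendix: It\^o's formula applied to the stopped particle $\left(X^{1}_{t\wedge T_{1}},\,\sigma^{1}_{t}\right)$, the boundary condition $f(0,\,y)=0$ identifying the stopped quantity with $f\left(X_{t}^{1},\,\sigma_{t}^{1}\right)\mathbb{I}_{\{T_{1}>t\}}$, and conditioning on $\left(W^{0}_{\cdot},\,B^{0}_{\cdot},\,C_{1},\,\mathcal{G}^{0}\right)$ so that the idiosyncratic integrals vanish by independence while the systemic ones survive with the conditional expectation pulled inside. Your deferral of the interchange-of-expectation-and-stochastic-integral justification to the CIR-case argument of \cite{HK17}, after checking it never uses $q(z)=\sqrt{z}$, matches exactly how the paper handles this step.
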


\section{Analysis of the volatility}

As in \cite{HK17}, after showing convergence of the empirical measure process to the probabilistic solution of an SPDE, we establish the existence of a regular density for that solution by following a similar approach. Therefore, our first step is to show that a regular density exists $\mathbb{P}^{0}$-almost surely for the volatility component of the solution, i.e the 1-dimensional distribution-valued process whose value at time $t$ maps any suitable function $f: \mathbb{R} \mapsto \mathbb{R}$ to $\mathbb{E}\left[f(\sigma_{t})\,|\,B^{0},\,\mathcal{G}^0\right]$, where $\sigma$ is a general mean-reverting volatility process driven by a combination of $B^{0}$ and $B^1$, that is a process satisfying
\begin{equation}\label{eq:3.1}
d\sigma_{t}=k(\theta-\sigma_{t})dt+ \xi q\left(\sigma_{t}\right)\sqrt{1-\rho_{2}^{2}}dB_{t}^{1}+\xi q\left(\sigma_{t}\right)\rho_{2}dB_{t}^{0},\;t\geq0
\end{equation}
for some suitable function $q$, with $\sigma_0$ being $F_0^0 \times F_0^1$-measurable. The assumption we make about $q$ is the following:

\begin{ass}\label{ass1}
The function $q$ belongs to the space $C^3\left(\mathbb{R}\right)$, is bounded above and strictly away from 0, and has bounded $\mathcal{O}\left(\frac{1}{|x|}\right)$ (as the argument $x$ tends to $\pm \infty$) derivatives up to third order.
\end{ass}

\begin{rem}\label{rem3}
The class of functions $q$ satisfying Assumption~\ref{ass1} clearly contains all functions which are identically equal to a positive constant, making the Theorem applicable to an Ornstein-Uhlenbeck volatility setting. However, it is actually a much bigger class, which contains all positive $C^3$ functions behaving like a positive constant for large $x$. We can also construct functions belonging to that class which are equal to the square root function in some compact subinterval of $\left(0, \, +\infty\right)$, which could be used to obtain effective approximations to the CIR model for which we were unable to obtain very good results in \cite{HK17,ERR}.
\end{rem}

If we assume that $\rho_2 \in (-1, \, 1)$, the answer to our question is given in the next theorem
\begin{thm}\label{thm:3.1}
Suppose that $q$ satisfies Assumption~\ref{ass1} and $\sigma_{0}$ is a random variable in $L^{p}\left(\Omega^0,\,\mathcal{F}_{0}^0,\,\mathbb{P}^0\right)$
for all $p \geq 0$. Then $\mathbb{P}^0$ - almost surely, the conditional probability measure $\mathbb{P}(\sigma_{t}\in A\,|\,B^{0},\,\mathcal{G}^0), \, A \subset \mathbb{R}$ has a continuous density $p_{t}(y\,|\,B^{0},\,\mathcal{G}^0), \, y \in \mathbb{R} $ for all $t > 0$. Moreover, for any $T>0$, if we define
\[
M(t) := \esssup_{y \in \mathbb{R}, \, \omega^0 \in \Omega^0}p_{t}(y\,|\,B^{0},\,\mathcal{G}^0)\left(\omega^0\right),
\]
and also
\[
M_{B^{0},\, \mathcal{G}^0}^{\alpha}(t) := \sup_{y \in \mathbb{R}}\left(|y|^{\alpha}p_{t}(y\,|\,B^{0},\,\mathcal{G}^0)\right), 
\]
for all $0 \leq t \leq T$ and any $\alpha \geq 0$, we have both $M \in L^{1}\left(\left[0, \, T\right]\right)$ and $M_{B^{0},\, \mathcal{G}^0}^{\alpha} \in L^{p}\left(\Omega^0 \times \left[0, \, T\right]\right)$ for any $1 \leq p < 2$.
\end{thm}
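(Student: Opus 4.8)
The plan is to fix the realisation of the systemic noise and to run a (partial) Malliavin calculus argument in the idiosyncratic direction $B^1$. Working given $(B_\cdot^0,\mathcal{G}^0)$, I regard \eqref{eq:3.1} as an SDE on $(\Omega^1,\mathcal{F}^1,\mathbb{P}^1)$ driven by $B^1$, with the $B^0$-terms and $\sigma_0$ entering as frozen (path-dependent) data. Since $q\in C^3$ has bounded derivatives, the coefficients of \eqref{eq:3.1} are smooth with bounded derivatives, so standard results give $\sigma_t\in\mathbb{D}^{\infty}$ with respect to $B^1$ for every $t$; the hypothesis $\sigma_0\in\bigcap_{p}L^p$ together with the boundedness of $q$ propagates all $L^p(\Omega^1)$-moments of $\sigma_t$ and of its Malliavin derivatives. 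Because the diffusion coefficient of $\sigma_t$ is uniformly elliptic, a Gaussian-type upper bound for the conditional law is available, and the Malliavin machinery will turn this into the regularity and the bounds we need.

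First I would compute the first Malliavin derivative. Differentiating \eqref{eq:3.1} shows that, for $0\le r\le t$,
\[
D_r\sigma_t=\xi\sqrt{1-\rho_2^2}\,q(\sigma_r)\,Z_{r,t},
\]
where the first-variation process $Z_{r,t}$ solves the linearised equation and is given explicitly by
\[
Z_{r,t}=\exp\left(-k(t-r)-\frac12\int_r^t\xi^2 q'(\sigma_s)^2\,ds+\int_r^t\xi q'(\sigma_s)\sqrt{1-\rho_2^2}\,dB_s^1+\int_r^t\xi q'(\sigma_s)\rho_2\,dB_s^0\right).
\]
Since $q'$ is bounded, both $Z_{r,t}$ and $Z_{r,t}^{-1}$ have moments of every order. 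The one-dimensional Malliavin covariance is then
\[
\gamma_t=\int_0^t(D_r\sigma_t)^2\,dr=\xi^2(1-\rho_2^2)\int_0^t q(\sigma_r)^2 Z_{r,t}^2\,dr\ \ge\ \xi^2(1-\rho_2^2)c^2\int_0^t Z_{r,t}^2\,dr\ >\ 0,
\]
using $\rho_2\in(-1,1)$ and the lower bound $q\ge c>0$ from Assumption~\ref{ass1}. The crucial analytic step is to prove that $\mathbb{E}^1[\gamma_t^{-p}]<\infty$ for every $p$: this is exactly where the boundedness of the derivatives of $q$ is decisive, since it keeps $Z_{r,t}$ from degenerating and delivers the negative moments of $\gamma_t$ through exponential-martingale and small-ball estimates for $\int_0^t Z_{r,t}^2\,dr$.

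With $\sigma_t\in\mathbb{D}^\infty$ and $\gamma_t^{-1}\in\bigcap_p L^p(\Omega^1)$, the Bouleau--Hirsch and Nualart criteria give, $\mathbb{P}^0$-almost surely, a smooth and hence continuous conditional density $p_t(y\,|\,B_\cdot^0,\mathcal{G}^0)$ for every $t>0$. For the quantitative statements I would use the Malliavin integration-by-parts formula to represent
\[
p_t(y)=\mathbb{E}^1\big[\mathbb{I}_{\{\sigma_t>y\}}H_t\big],\qquad H_t=\delta\big(\gamma_t^{-1}D\sigma_t\big),
\]
with $\delta$ the Skorohod integral in the $B^1$-direction. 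Taking the supremum over $y$ gives $p_t(y\,|\,B_\cdot^0,\mathcal{G}^0)\le\mathbb{E}^1[|H_t|]$; since $\gamma_t\sim\xi^2(1-\rho_2^2)q(\sigma_0)^2\,t$ as $t\downarrow0$ with $q(\sigma_0)^2\in[c^2,\|q\|_\infty^2]$, the weight is of order $t^{-1/2}$, whence $M(t)=O(t^{-1/2})$ and $M\in L^1([0,T])$. For the weighted quantity I would note that on $\{\sigma_t>y\}$ with $y\ge0$ one has $|y|^\alpha\le|\sigma_t|^\alpha$, and symmetrically for $y\le0$, so that
\[
M_{B_\cdot^0,\mathcal{G}^0}^{\alpha}(t)\le\mathbb{E}^1\big[|\sigma_t|^\alpha|H_t|\big]\le\big\||\sigma_t|^\alpha\big\|_{L^{p^*}(\Omega^1)}\,\|H_t\|_{L^{q^*}(\Omega^1)}
\]
by Hölder, with conjugate exponents $p^*,q^*$. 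Both factors are finite (the first by the Gaussian tails of $\sigma_t$, the second by continuity of $\delta$ on $\mathbb{D}^{1,q^*}$), and $\|H_t\|_{L^{q^*}}=O(t^{-1/2})$; since $(t^{-1/2})^p$ is integrable on $[0,T]$ precisely when $p<2$, this yields $M_{B_\cdot^0,\mathcal{G}^0}^{\alpha}\in L^p(\Omega^0\times[0,T])$ for all $1\le p<2$.

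The main obstacle, in my view, lies in controlling the dependence on the frozen systemic path. The negative-moment bound $\mathbb{E}^1[\gamma_t^{-p}]<\infty$ must be obtained with constants that are, for $M$, uniform in $\omega^0$ and, for $M^\alpha$, integrable over $\Omega^0$; the systemic drift enters $Z_{r,t}$ through $\exp(\int\xi q'(\sigma_s)\rho_2\,dB_s^0)$, and it is the boundedness together with the $\mathcal{O}(1/|x|)$ decay of the derivatives of $q$ in Assumption~\ref{ass1} that keep these exponentials and the higher-order Malliavin weights integrable against the polynomial factor $|\sigma_t|^\alpha$ and uniformly controlled in the path. One must also check that the near-diagonal $t^{-1/2}$ blow-up of the weight is genuinely uniform in $\omega^0$, which reduces to the uniform ellipticity $q\ge c>0$ dominating the path-dependent drift at leading order as $t\downarrow0$. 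Once these path-uniform estimates are in place, the $L^1([0,T])$ and $L^p(\Omega^0\times[0,T])$ claims follow by integrating the $O(t^{-1/2})$ bounds, the threshold $p<2$ being dictated precisely by this singularity.
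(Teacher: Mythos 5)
Your overall frame is the same as the paper's (fix $\omega^0$, do Malliavin calculus in the $B^1$ direction, get a density bound of order $t^{-1/2}$, integrate in $t$ with the threshold $p<2$), but there is a genuine gap at exactly the point you flag as ``the main obstacle'', and the idea that resolves it is missing. Working directly on \eqref{eq:3.1}, your first-variation process $Z_{r,t}$ contains the stochastic integrals $\int_r^t\xi q'(\sigma_s)\sqrt{1-\rho_2^2}\,dB_s^1+\int_r^t\xi q'(\sigma_s)\rho_2\,dB_s^0$, so the Malliavin covariance $\gamma_t$ has no deterministic lower bound and you are forced into negative-moment and small-ball estimates for $\mathbb{E}^1\left[\gamma_t^{-p}\right]$. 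But under the conditional measure the $dB^0$-integral is a random variable in $\omega^1$ whose size is governed by the frozen path $B^0(\omega^0)$, which is unbounded over $\Omega^0$; consequently any bound on $\mathbb{E}^1\left[\gamma_t^{-p}\right]$, and hence on $\mathbb{E}^1\left[|H_t|\right]$, carries an $\omega^0$-dependent constant. The first assertion of the theorem requires $M(t)=\esssup_{y,\omega^0}p_t(y\,|\,B_\cdot^0,\mathcal{G}^0)\le C'+C''t^{-1/2}$ with \emph{deterministic} $C',C''$, and your route does not deliver this: you assert uniformity in $\omega^0$ ``reduces to the uniform ellipticity $q\ge c>0$ dominating the path-dependent drift'', but no argument is given, and for bad systemic paths the exponential in $Z_{r,t}$ genuinely degenerates. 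The same issue contaminates the $M^\alpha$ estimate, where your $\|H_t\|_{L^{q^*}(\Omega^1)}=O(t^{-1/2})$ hides a random implicit constant whose integrability over $\Omega^0$ is never established.

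The paper's proof avoids all of this with one preliminary move you do not make: the Lamperti-type transform $v_t=Q(\sigma_t)$ with $Q(y)=\int_0^y q(z)^{-1}\,dz$. Then $v$ solves \eqref{eq:3.1.2} with \emph{constant} diffusion coefficient and drift $V$ whose first and second derivatives are bounded precisely because of Assumption~\ref{ass1} (this is where the decay of $q'$, $q''$, $q^{(3)}$ is used — not, as in your sketch, to control exponential martingales). The Malliavin derivative becomes $D_{t'}v_t=\xi\sqrt{1-\rho_2^2}\,e^{-\int_{t'}^t V'(v_s)ds}$, a purely Lebesgue-integral exponential with no stochastic integral at all, so $D_{t'}\sigma_t=\xi\sqrt{1-\rho_2^2}\,q(\sigma_t)e^{-\int_{t'}^t V'(Q(\sigma_s))ds}$ is bounded above and below by deterministic constants, and likewise $D^2\sigma_t$ is deterministically bounded. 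Hence $\gamma_t\ge t\,(b_T'')^2$ pathwise — no negative moments, no small-ball estimates — and the density bound from Lemma E2.1 of \cite{ERR} comes with constants uniform in $\omega^0$, which is what makes $M\in L^1([0,T])$ provable; the weighted case then only needs H\"older plus the moment bound of Lemma~\ref{lem:3.2}, as in your last display. To repair your proposal you would either have to import this transform, or prove uniform-in-$\omega^0$ negative-moment bounds for your $\gamma_t$, which I do not believe hold in the stated generality.
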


Observe that for each $t$, $M(t)$ is a deterministic bound for the density given $B^0$, something we did not have for the volatility density bounds obtained in \cite{HK17,ERR}, and as we have already mentioned, this will play a decisive role in obtaining the desired results. The proof of the above theorem, which is based on a few results that involve the notion of Malliavin differentiability, can be found in the Appendix (subsection~\ref{pf3.3}). We refer to \cite{Nualart} for the basics of Malliavin
calculus, and to \cite{NUZA} for an extension called the partial Malliavin calculus which applies to conditional probability measures. In this section we are working under the conditional probability measure $\mathbb{P}(\cdot\,|\,B^{0},\,\mathcal{G}^0)$, which means that partial Malliavin calculus is the natural tool for proving our results. However, it is not hard to see that in our setting we can $\mathbb{P}^0$-almost surely fix an $\omega^0 \in \Omega^0$, and work under $\mathbb{P}(\sigma_{t} \in \cdot\,|\,B^{0},\,\mathcal{G}^0) = \mathbb{P}^{1}(\{\omega^1 \in \Omega^1: \sigma_{t}(\omega^1, \, \omega^0) \in \cdot \})$ for that given $\omega^0$, where ordinary Malliavin calculus can be used with respect to the Brownian motion $B^1$. It is not hard to check that the Malliavin derivative with respect to $B^1$ for any continuous path of $B^{0}$, coincides with the partial Malliavin derivative in the sense of \cite{NUZA}, when the random subspace $K\left(\omega\right)$ is the orthogonal complement of the space generated by the Malliavin derivatives of $B_{r}^{0}$ with respect to $\left(B^{0}, B^1\right)$, for $r \in \mathbb{Q}^{+}$. 

\section{The two-dimensional density and the initial-boundary value problem}

We proceed now to the proof of the existence of a regular density for the full two-dimensional distribution-valued process $v_{t}$ which satisfies the SPDE given in Theorem~\ref{thm:2.6}. First, given a Hilbert space $H$, we denote by $L^{2}\left(\left(\Omega^0, \, \mathcal{F}^0, \, \mathbb{P}^0\right)\times\left[0,\,T\right];\,H\right)$
the space of $H$- valued stochastic processes defined on the probability space $\left(\Omega^0, \, \mathcal{F}^0, \, \mathbb{P}^0\right)$ which are $L^2$-integrable and adapted to the filtration generated by the two-dimensional Brownian motion $(W^{0},\,B^{0})$ and by $\mathcal{G}^0$. We will use some stronger versions of \cite[Theorem 4.1]{HK17} and \cite[Lemma E2.2]{ERR}, and we will work in the function spaces 
\[
L_{\alpha}=L^{2}\left(\left(\Omega^0, \, \mathcal{F}^0, \, \mathbb{P}^0\right)\times\left[0,\,T\right];\,L_{|y|^{\alpha}}^{2}\left(\mathbb{R}^{+}\times\mathbb{R}\right)\right)
\]
for $\alpha \geq0$ and
\[
H_{0}=L^{2}\left(\left(\Omega^0, \, \mathcal{F}^0, \, \mathbb{P}^0\right)\times\left[0,\,T\right];\,H_{0, w^2(x)}^{1}\left(\mathbb{R}^{+}\right)\times L^{2}\left(\mathbb{R}\right)\right),
\]
for $w(x) = \min\left\{1,\sqrt{x}\right\}$, which were defined in \cite{ERR} as well. As in that paper, $L_{g(y)}^{2}$ stands for the weighted $L^{2}$ space with weight function $\left\{ g(y):y \in \mathbb{R}\right\}$, $H_{0, g(x)}^{1}\left(\mathbb{R}^{+}\right)$ stands for the weighted $H_{0}^{1}\left(\mathbb{R}^{+}\right)$ space with weight function $\left\{ g(x):x\geq0\right\}$ in the $L^2$ norm of the weak derivative, and any function $u'$ belonging to the $H_{0}$ space will also satisfy $\displaystyle{\lim_{x \rightarrow 0^{+}}\left\Vert u'(\cdot, \, x, \, \cdot) \right\Vert_{L^2\left(\Omega^0 \times \left[0, \, T\right] \times \mathbb{R}\right)} = 0}$. The last condition will be the boundary condition in our stochastic initial-boundary value problem, which cannot be defined via a trace operator due to $\int_0^{\epsilon}w^{-2}(x)dx$ being infinite for all $\epsilon > 0$. 

Since the results in \cite{ERR} were established under the condition $\rho_3 = 0$, it was not that important for the volatility path in \cite[Theorem 4.1]{HK17} to depend on $W^0$. However, now that $W^0$ and $B^0$ are allowed to be correlated, the following extension is needed:

\begin{thm}\label{thm:4.1}
For an $\mathcal{F}_t$-adapted stochastic process $\sigma = \{\sigma_{t}\}_{t\geq0}$ which is independent from $W^1$, has continuous paths and takes values in a compact subinterval of $\left(0, \, +\infty\right)$, let $\{X_{t}^{\sigma}:\,t\geq0\}$ be the solution to the stopped SDE 
\begin{equation}\label{eq:4.1}
\begin{cases}
dX_{t}^{\sigma}=\left(r-\frac{\sigma_{t}}{2}\right)dt+\sqrt{\sigma_{t}}\sqrt{1-\rho_{1}^{2}}dW_{t}^{1} 
+\sqrt{\sigma_{t}}\rho_{1}dW_{t}^{0}, & 0\leq t\leq\tau,\\
X_{t}^{\sigma}=0, & t\ge\tau,
\end{cases}
\end{equation}
for $\tau=\inf\{t\geq0:\,X_{t}^{\sigma}=0\}$, under the initial
condition $X_{0}^{\sigma}=X_{0}$, where $X_{0}$ is a continuous random variable. Suppose that given $\mathcal{G}^0$ and $\sigma_0$, $X_{0}$ has an $L^{2}$-integrable density $u_{0}=u_{0}(\cdot\,| \,\mathcal{G}^0, \,\sigma_0)$. Finally, let
$\{V_{t}^{\sigma}:\,t\geq0\}$ be the measure-valued process
\[
V_{t}^{\sigma}(A)=\mathbb{P}\left(X_{t}^{\sigma}\in A,\tau\geq t\,|\,W^{0},\,\mathcal{G}^0, \sigma\right)
\]
for any Borel set $A\subset\left(0,\,\infty\right)$. Then almost
surely, the following are true for all $T>0$;
\begin{enumerate}
\item  $V_{t}^{\sigma}$ possesses a density $u^{\sigma}(t, \, \cdot) = u^{\sigma}\left(t,\,\cdot\,;\,W^{0},\,\mathcal{G}^0\right)$
for all $0\leq t\leq T$, which is the unique solution to the SPDE
\begin{equation}\label{eq:4.2}
du^{\sigma}(t,\,x)=-\left(r-\frac{\sigma_{t}}{2}\right)u_{x}^{\sigma}(t,\,x)dt+\frac{\sigma_{t}}{2}
u_{xx}^{\sigma}(t,\,x)dt-\sqrt{\sigma_{t}}\rho_{1}u_{x}^{\sigma}(t,\,x)dW_{t}^{0}
\end{equation}
in $L^{2}\left(\left(\Omega^0, \, \mathcal{F}^0, \, \mathbb{P}^0\right)\times\left[0,\,T\right];\,H_{0}^{1}\left(\mathbb{R}^{+}\right)\right)$
under the initial condition $u^{\sigma}(0,\cdot)=u_{0}$. 

\item For all $0\leq t\leq T$, the following identity holds
\begin{equation}\label{eq:4.3}
\left\Vert u^{\sigma}(t,\,\cdot)\right\Vert_{L^{2}(\mathbb{R}^{+})}^{2}+\left(1-\rho_{1}^{2}\right)\int_{0}^{t}\sigma_{s}
\left\Vert u_{x}^{\sigma}(s,\,\cdot)\right\Vert_{L^{2}(\mathbb{R}^{+})}^{2}ds=\left\Vert u_{0}\right\Vert_{L^{2}(\mathbb{R}^{+})}^{2}.
\end{equation}
\end{enumerate}
\end{thm}

\begin{proof}[Proof (outline)]
We will describe how the proof of \cite[Theorem 4.1]{HK17} needs to be modified to give this extension. First, we need the corresponding extension of \cite[Lemma 4.2]{HK17}, which was the main tool for proving \cite[Theorem 4.1]{HK17}:
\begin{lem}{\label{convergence}}
Let $W = \sqrt{1-\rho_{1}^{2}}dW_{t}^{1} + \rho_{1}dW_{t}^{0}$, and let also $\{\sigma_{t}^{0}:\,0\leq t\leq T\}$ be a positive and continuous stochastic process which is approximated from above by a pointwise decreasing sequence $\left(\{\sigma_{t}^{m}:\,0\leq t\leq T\}\right)_{m\in\mathbb{N}}$ of $L^2\left(\Omega \times \left[0,\,T\right]\right)$-integrable, $\mathcal{F}_t$-adapted and $W^1$-independent processes, with the convergence holding for each $t \in \left[0, \, T\right]$ both in $L^2\left(\Omega\right)$ and for all $\omega$ in some $\tilde{\Omega}$ which does not depend on $t$ and has probability $1$. For any $m \in \mathbb{N}\cup \{0\}$, denote by $X^{m}$ the stopped Ito process given by
\begin{equation}\label{eq:4.4}
\begin{cases}
dX_{t}^{m}=\left(r-\frac{\sigma_{t}^{m}}{2}\right)dt+\sqrt{\sigma_{t}^{m}}dW_{t}, & 0\leq t\leq\tau^{m},\\
X_{t}^{m}=0, & t > \tau^{m},
\end{cases}
\end{equation}
where $\tau^{m}=\inf\{t\geq0:\,X_{t}^{m}=0\}$, with the initial condition
\[
X_{0}^{m}=\max\left\{x_{0}-l_{m},\,\frac{x_{0}}{2}\right\},
\]
for $x_0 \geq 0$ and 
\[l_{m}=\left(\left\Vert \sqrt{\sigma_{.}^{m}}-\sqrt{\sigma_{.}^{0}}\right\Vert _{L^{2}\left(\Omega \times \left[0,\,T\right]\right)}\right)^{1/2}\] 
Then, for a sequence $\{m_{k}:\,k\in\mathbb{N}\}\subset\mathbb{N}$,
we have almost surely: $X_{t}^{m_{k}}\rightarrow X_{t}^{0}$ uniformly in $\left[0,\,T\right]$.
\end{lem}
\noindent The proof of the above is identical to that of \cite[Lemma 4.2]{HK17}, as all the arguments remain valid when the deterministic volatility paths are replaced with $L^2\left(\Omega \times \left[0,\,T\right]\right)$-integrable, $\mathcal{F}_t$-adapted and $W^1$-independent processes. 

Now we modify the proofs of 1 and 2 of \cite[Theorem 4.1]{HK17} to obtain 1 and 2 of this extension respectively. The proof of 1 does not need any modification beyond replacing $(\Omega, \mathcal{F}, \mathbb{P})$ with $(\Omega^0, \mathcal{F}^0, \mathbb{P}^0)$, as the same arguments apply when $\sigma$ is non-deterministic. For the proof of 2 we need to repeat steps 1, 2 and 3 with the approximating volatilies being $\mathcal{F}_t$-adapted and $W^1$-independent processes which are constant between consecutive terms of sequences of $W^1$-independent $\mathcal{F}_t$ - stopping times. For step 1 there is nothing to change since an $\mathcal{F}_t$-adapted constant process is always deterministic given its initial value, so the results of \cite{BHHJR, HL16} are applicable again. For step 2, the piecewise constant path $\sigma$ is determined by a sequence $\{t_m\}_{m=1}^{\infty}$ of $W^1$-independent $\mathcal{F}_t$ - stopping times and the sequence of random variables $\{\sigma_{t_m}\}_{m=1}^{\infty}$. Denoting $\sigma^{t_m} = \{\sigma_{t \wedge t_m}\}_{t \geq 0}$, for $t \leq t_{n+1}$ we can write
\begin{align}\label{stopped2}
V_t^{\sigma}(A) 
&= \mathbb{P}\left(X_{t}^{\sigma^{t_n}} \in A, \, t \leq \tau \, \bigg| \, W^0, \mathcal{G}^0, \{t_m\}_{m=1}^{n}, \{\sigma_{t_m}\}_{m=1}^{n}\right)
\end{align}
so a density for $V_t^{\sigma}(\cdot)$ that satisfies \eqref{eq:4.2} on $[0, t_n]$ can be extended to $[0, t_{n+1}]$ by using step 1 along with the strong Markov property for the right hand side of \eqref{stopped2} on $[t_n, \infty)$ and then stopping at $t_{n+1}$. Then, for step 3, the only things we change are that we replace \cite[Lemma~4.2]{HK17} with its extension obtained here (Lemma~\ref{convergence} above), we condition on $(W^0, \mathcal{G}^0, (\sigma^{m_k})_{k \in \mathbb{N}})$ instead of $(W^0, \mathcal{G})$ in all the conditional probabilities and densities that are considered in the original proof (so the set $B$ in the last display is taken in the $\sigma$-algebra generated by $(W^0, \mathcal{G}^0, (\sigma^{m_k})_{k \in \mathbb{N}})$ instead of $(W^0, \mathcal{G})$), and we have the second term in \cite[Equation (4.5)]{HK17} with $\sigma_s^{m_k}$ inside the norm since $\sigma^{m_k}$ is no longer a deterministic path. Because of the latter, we need to recall that $m= \min_{0\leq s\leq T}\{\sigma_{s}\}$ has a deterministic positive lower bound in order to derive the weak convergence of $u^{n_k}$ in $L^{2}\left(\Omega\times\left[0,\,T\right];\,H_{0}^{1}\left(\mathbb{R}^{+}\right)\right)$. The rest of the arguments remain unchanged, and even the clarification in \cite[Appendix~A]{ERR} is still valid. 

Finally, we need to justify the existence of the approximating sequence before applying Lemma 4.2, which is not that obvious when the paths are non-deterministic. For the later, we need to construct a pointwise decreasing sequence $\{\{\sigma_t^{m}\}_{t \in \left[0, \, T\right]}\}_{m \in \mathbb{N}}$ of piecewise constant, $\mathcal{F}_t$-adapted, $W^1$-independent and $L^2\left(\Omega \times \left[0,\,T\right]\right)$-integrable processes, which approximate $\sigma = \{\sigma_t\}_{t \geq 0}$ from above. This sequence can be constructed as follows:
For each $m \in \mathbb{N}^+$ we define $t_0^m = 0$ and then
\begin{eqnarray}
t_{k+1}^m = \inf\left\{t > t_k^m : \sigma_{t_k^m} + \frac{1}{m} = \sigma_t\right\} \wedge \left(t_k^m + \frac{1}{m}\right) \wedge T \nonumber 
\end{eqnarray}
for any $k \in \mathbb{N} = \{0, 1, 2, \ldots\}$, where we see that almost surely we have $t_k^m = T$ for large enough $k$, as otherwise the increasing (in $k$) sequence $t_k^m$ would have a limit $\tilde{T} \leq T$, and taking limits in $\sigma_{t_k^m} + \frac{1}{m} = \sigma_{t_k^{k+1}}$ would yield $\sigma_{\tilde{T}} = \frac{1}{m} + \sigma_{\tilde{T}} \Rightarrow \frac{1}{m} = 0$, a contradiction. Next we define
\begin{eqnarray}\label{pcaf}
\tilde{\sigma}_t^m = \sigma_{t_k^m} + \frac{1}{m}, \qquad t \in \left[t_k^m, \, t_{k+1}^m\right).
\end{eqnarray}
for any $k \in \mathbb{N}$ with $t_k^m < T$, which is constant between any two consecutive terms of a sequence of $W^1$-independent stopping times and clearly also $W^1$-independent. Finally we define $\tilde{\sigma}_t^{0} = 2C$ and $\displaystyle{\sigma_t^m = \inf_{0 \leq m' \leq m}\tilde{\sigma}_t^{m'}}$ for all $t \leq T$ and $m \in \mathbb{N}^{+}$ to achieve the required monotonicity and integrability. It is not hard to check that the last sequence of processes has all the required properties, and by using \eqref{pcaf} and the fact that $|t_{k+1}^m - t_k^m| \leq \frac{1}{m}$ for all $k$ and $m$ we can also show that almost surely (and in $L^2\left(\Omega\right)$ by dominated convergence) it converges pointwise to $\{\sigma_t\}_{t \in \left[0, \, T\right]}$. 
\end{proof}
\noindent We are now ready to establish the existence of a regular density for $v_{t}$.

\begin{thm}\label{thm:4.2}
Suppose that the function $h$ is positive, bounded and continuous. Suppose also that, given $\mathcal{G}^0$ and $\sigma_0^1$, $X^{1}_{0}$ has an $L^{2}$-integrable density $u_0 = u_{0}(\cdot|\mathcal{G}^0, \sigma_0^1)$ on $\mathbb{R}^{+}$ such that $\mathbb{E}\left[ \left \Vert w(\cdot)\left(u_0\right)_{x}(\cdot)\right \Vert_{L^{2}(\mathbb{R}^{+})}^{2}\right] < \infty$ and $\mathbb{E}\left[\left \Vert u_{0} \right \Vert_{L^{2}\left(\mathbb{R}^{+}\right)}^{2} \, | \, \mathcal{G}^0 \right]
\in L^{p'}\left(\Omega^0\right)$ for some $p' > 2$. Finally, suppose that the assumptions of Theorem~\ref{thm:3.1} are satisfied for the function $q$ and for $\sigma_{0} = \sigma^{1}_{0}$.
Then, the measure-valued stochastic process $v_{t}$ has a two-dimensional density $u = u(t,\,\cdot; \,W^{0},\,B^{0},\,\mathcal{G}^0)$
belonging to the space $L_{\alpha}$ for all $\alpha \geq 0$, and also to the space $H_{0}$.
\end{thm}

\begin{proof}
Let $f$ be a smooth function, compactly supported
in $\mathbb{R}^{2}$, such that $f$ vanishes on the $y$ - axis. Then by Theorem~\ref{thm:3.1} we have
\begin{eqnarray}
v_{t}\left(f \right)&=& \mathbb{E}\left[f\left(X_{t}^{1}, \, \sigma_{t}^{1}\right)\mathbb{I}_{\{T_{1}\geq t\}}\,|W^{0},B^{0},\mathcal{G}^0\right] \nonumber \\
&=&\mathbb{E}\left[\mathbb{E}\left[f\left(X_{t}^{1}, \, \sigma_{t}^{1}\right)\mathbb{I}_{\{T_{1}\geq t\}}\,|W^{0},\sigma_{t}^{1},B^{0},\mathcal{G}^0\right]\,|W^{0},B^{0},\mathcal{G}^0\right]
\nonumber \\
&=&\int_{\mathbb{R}}\mathbb{E}\left[f\left(X_{t}^{1}, \, y\right)\mathbb{I}_{\{T_{1}\geq t\}}\,|W^{0},\sigma_{t}^{1}=y,B^{0},\mathcal{G}^0\right]p_{t}\left(y|B^{0},\mathcal{G}^0\right)dy. 
\label{eq:4.9}
\end{eqnarray}
Next we have
\begin{eqnarray}
& & \mathbb{E}\left[f\left(X_{t}^{1}, \, y\right)\mathbb{I}_{\{T_{1}\geq t\}}|W^{0},
\sigma_{t}^{1}=y,B^{0},\mathcal{G}^0\right] \nonumber \\
& & \qquad =\mathbb{E}\left[\mathbb{E}\left[f\left(X_{t}^{1}, \, y\right)\mathbb{I}_{\{T_{1}\geq t\}}|
W^{0},\mathcal{G}^0, \sigma^1\right]|W^{0},\sigma_{t}^{1}=y,B^{0},
\mathcal{G}^0\right] \nonumber \\
& & \qquad =\mathbb{E}\left[\int_{\mathbb{R}^+}f(x, \, y)u^{h\left(\sigma^1\right)}\left(t,x; W^{0},\mathcal{G}^0\right)dx\Big|W^{0},\sigma_{t}^{1}=y,B^{0},\mathcal{G}^0\right] 
\nonumber \\
& & \qquad =\int_{\mathbb{R}^+}f(x, \, y)\mathbb{E}\left[u^{h\left(\sigma^1\right)}\left(t,x; W^{0},\mathcal{G}^0\right) \Big|W^{0},\sigma_{t}^{1}=y,B^{0},\mathcal{G}^0\right]dx,  \label{eq:4.10}
\end{eqnarray}
where we are using the notation of Theorem~\ref{thm:4.1}, with $h\left(\sigma^1\right)$ being the path $(h\left(\sigma_t^1\right))_{t \geq 0}$. Substituting \eqref{eq:4.10} in \eqref{eq:4.9} 
we find that the desired density exists and is given by
\begin{eqnarray}
&&u\left(t,x,y ; W^{0},B^{0},\mathcal{G}^0\right) \nonumber \\
&& \qquad \qquad =p_{t}\left(y|B^{0},\mathcal{G}^0\right)
\mathbb{E}\left[u^{h\left(\sigma^1\right)}\left(t,x; W^{0},\mathcal{G}^0\right)\,|W^{0},\sigma_{t}^{1}=
y,B^{0},\mathcal{G}^0\right] \nonumber 
\end{eqnarray}
in $\mathbb{R}^{+}\times\mathbb{R}$. Then, by the Cauchy-Schwarz inequality we have
\begin{eqnarray*}
& & w^2(x)\left(\frac{\partial u}{\partial x}\left(t,\,x,\,y;\,W^{0},\,B^{0},\,\mathcal{G}^0\right)\right)^{2} \\
& & \qquad \leq  M(t) p_{t}\left(y\,|\,B^{0},\,\mathcal{G}^0\right) \\
& & \qquad \qquad \times w^2(x)\mathbb{E}^{2}\left[\frac{\partial}{\partial x}u^{h\left(\sigma^1\right)}\left(t,x; W^{0},\mathcal{G}^0\right)\,\Big|\,W^{0},\,\sigma_{t}^{1}=y,\,B^{0},\,\mathcal{G}^0\right] \\
& & \qquad \leq  M(t)p_{t}\left(y\,|\,B^{0},\,\mathcal{G}^0\right) \nonumber \\
& & \qquad \qquad \times \mathbb{E}\left[w^2(x)\left(\frac{\partial}{\partial x}u^{h\left(\sigma^1\right)}\left(t,x; W^{0},\mathcal{G}^0\right)\right)^2\,\bigg|\,W^{0},\,\sigma_{t}^{1}=y,\,B^{0},\,\mathcal{G}^0\right],
\end{eqnarray*}
where $\displaystyle{M(t) = \esssup_{y \in \mathbb{R}, \, \omega^0 \in \Omega^0}p_{\cdot}(y\,|\,B^{0},\,\mathcal{G}^0)\left(\omega^0\right)\in L^{1}\left(\left[0, \, T\right]\right)}$
(defined in Theorem~\ref{thm:3.1}). Integrating the above
in $y$ and using the law of total expectation, we obtain
\begin{eqnarray*}
& & \int_{\mathbb{R}}w^2(x)\left(\frac{\partial u}{\partial x}\left(t,\,x,\,y;\,W^{0},\,B^{0},\,\mathcal{G}^0\right)\right)^{2}dy \\
& & \qquad \qquad \leq M(t)\times\mathbb{E}\left[w^2(x)\left(\frac{\partial}{\partial x}u^{h\left(\sigma^1\right)}\left(t,x; W^{0},\mathcal{G}^0\right)\right)^2\,\bigg|\,W^{0},\,B^{0},\,\mathcal{G}^0\right]
\end{eqnarray*}
and thus
\begin{eqnarray*}
& & \mathbb{E}\left[\int_{0}^{T}\int_{\mathbb{R}^+}\int_{\mathbb{R}} w^2(x) 
\left(\frac{\partial u}{\partial x}\left(t,\,x,\,y,\,W^{0},\,B^{0},\,
\mathcal{G}^0\right)\right)^{2}dydxdt\right]
\\
& & \quad \leq\mathbb{E}\Bigg[\int_{0}^{T}M(t) \nonumber \\
& & \qquad \qquad \times \int_{\mathbb{R}^{+}}
\mathbb{E}\left[w^2(x)\left(\frac{\partial}{\partial x}u^{h\left(\sigma^1\right)}\left(t,x; W^{0},\mathcal{G}^0\right)\right)^2\,|
W^{0},B^{0},\mathcal{G}^0\right]dxdt\Bigg] \\
& & \quad = \int_{0}^{T}M(t)\mathbb{E}\left[\int_{\mathbb{R}^{+}}w^2(x)\left(\frac{\partial}{\partial x}u^{h\left(\sigma^1\right)}\left(t,x; W^{0},\mathcal{G}^0\right)\right)^2dx\right]dt \\
& & \quad \leq \int_{0}^{T}M(t)dt\mathbb{E}\left[\sup_{0 \leq s \leq T}\int_{\mathbb{R}^{+}}w^2(x)\left(\frac{\partial}{\partial x}u^{h\left(\sigma^1\right)}\left(t,x; W^{0},\mathcal{G}^0\right)\right)^2dx\right]
\\
& & \quad \leq M'e^{M'T}\int_{0}^{T}M(t)dt\mathbb{E}\left[ \left \Vert w(\cdot)\left(u_0\right)_{x}(\cdot)\right \Vert_{L^{2}(\mathbb{R}^{+})}^{2}\right] \\
& & \quad < \infty
\end{eqnarray*}
by Tonelli's Theorem, Theorem~\ref{thm:3.1} and Lemma~E2.2 from \cite{ERR} (where the constant $M'$ is defined). The reason we can take $M'$ to be deterministic is that $h$ has a deterministic (positive) lower bound and a deterministic upper bound. This is the estimate for the $x$-derivative of
$u$. To obtain the weighted integrability for the density, we use the Cauchy-Schwarz inequality, Tonelli's Theorem and 2 of Theorem~\ref{thm:4.1}, so for any $\alpha \geq 0$ we have
\begin{eqnarray*}
& & \int_{\mathbb{R}^{+}}\int_{\mathbb{R}^{+}}|y|^{a}\left(u\left(t,\,x,\,y;\,W^{0},\,B^{0},\,\mathcal{G}^0\right)\right)^{2}dydx \\
& & \qquad \leq \int_{\mathbb{R}^{+}}\int_{\mathbb{R}^{+}}M_{B^{0},\, \mathcal{G}^0}^{\alpha}(t) p_{t}\left(y\,|\,B^{0},\,\mathcal{G}^0\right) \\
& & \qquad \qquad \times \mathbb{E}^{2}\left[u^{h\left(\sigma^1\right)}\left(t,x; W^{0},\mathcal{G}^0\right)\,|\,W^{0},\,\sigma_{t}^{1}=y,\,B^{0},\,\mathcal{G}^0\right]dydx \\
& & \qquad \leq  M_{B^{0},\, \mathcal{G}^0}^{\alpha}(t)\int_{\mathbb{R}^{+}}\int_{\mathbb{R}^{+}}p_{t}\left(y\,|\,B^{0},\,\mathcal{G}^0\right) \nonumber \\
& & \qquad \qquad \times \mathbb{E}\left[\left(u^{h\left(\sigma^1\right)}\left(t,x; W^{0},\mathcal{G}^0\right)\right)^2\,\Big|\,W^{0},\,\sigma_{t}^{1}=y,\,B^{0},\,\mathcal{G}^0\right]dydx \\
& & \qquad = M_{B^{0},\, \mathcal{G}^0}^{\alpha}(t)\int_{\mathbb{R}^{+}}\mathbb{E}\left[\left(u^{h\left(\sigma^1\right)}\left(t,x; W^{0},\mathcal{G}^0\right)\right)^2\,\Big|\,W^{0},\,B^{0},\,\mathcal{G}^0\right]dx
\\
& & \qquad = M_{B^{0},\, \mathcal{G}^0}^{\alpha}(t)\mathbb{E}\left[\int_{\mathbb{R}^{+}}\left(u^{h\left(\sigma^1\right)}\left(t,x; W^{0},\mathcal{G}^0\right)\right)^2dx\,\Big|\,W^{0},\,B^{0},\,\mathcal{G}^0\right] \\
& & \qquad \leq M_{B^{0},\, \mathcal{G}^0}^{\alpha}(t)\mathbb{E}\left[ \left \Vert u_0(\cdot)\right \Vert_{L^{2}(\mathbb{R}^{+})}^{2}\,|\,\mathcal{G}^0\right] 
\end{eqnarray*}
where $\displaystyle{M_{B^{0},\, \mathcal{G}^0}^{\alpha}(\cdot) = \sup_{y \in \mathbb{R}}\left(|y|^{\alpha}p_{\cdot}(y\,|\,B^{0},\,\mathcal{G}^0)\right) \in L^{p}\left(\Omega^0 \times \left[0, \, T\right]\right)}$
for any $p < 2$ (by Theorem~\ref{thm:3.1}). Taking then $p' > 2$ sufficiently close to $2$ and $p < 2$ such that $\frac{1}{p} + \frac{1}{p'} = 1$, by the above and by H\"{o}lder's inequality we have
\begin{eqnarray*}
& & \mathbb{E}\left[\int_{0}^{T}\int_{\mathbb{R}^{+}}\int_{\mathbb{R}^{+}}|y|^{a}\left(u\left(t,\,x,\,y;\,W^{0},\,B^{0},\,\mathcal{G}^0\right)\right)^{2}dydxdt\right] \\
& & \qquad \leq \mathbb{E}\left[\int_{0}^{T}M_{B^{0},\, \mathcal{G}^0}^{\alpha}(t)\mathbb{E}\left[ \left \Vert u_0(\cdot)\right \Vert_{L^{2}(\mathbb{R}^{+})}^{2}\,|\,\mathcal{G}^0\right]dt\right] \\
& & \qquad \leq \mathbb{E}^{\frac{1}{p}}\left[\left(\int_{0}^{T}M_{B^{0},\, \mathcal{G}^0}^{\alpha}(t)dt\right)^p\right] \mathbb{E}^{\frac{1}{p'}}\left[\mathbb{E}^{p'}\left[ \left \Vert u_0(\cdot)\right \Vert_{L^{2}(\mathbb{R}^{+})}^{2}\,|\,\mathcal{G}^0\right]\right] \\
& & \qquad \leq T^{\frac{1}{p'}}\mathbb{E}^{\frac{1}{p}}\left[\int_{0}^{T}\left(M_{B^{0},\, \mathcal{G}^0}^{\alpha}(t)\right)^pdt\right] \mathbb{E}^{\frac{1}{p'}}\left[\mathbb{E}^{p'}\left[ \left \Vert u_0(\cdot)\right \Vert_{L^{2}(\mathbb{R}^{+})}^{2}\,|\,\mathcal{G}^0\right]\right] \\
& & \qquad < \infty,
\end{eqnarray*}
which implies that the density belongs to the space $L_{\alpha}$ for any $\alpha \geq 0$. Finally, we need to show that $\displaystyle{\lim_{x \rightarrow 0^{+}}\left\Vert u(\cdot, \, x, \, \cdot) \right\Vert_{L^2\left(\Omega^0 \times \left[0, \, T\right] \times \mathbb{R}\right)} = 0}$, which follows from the estimate
\begin{eqnarray*}
& & \mathbb{E}\left[\int_{\mathbb{R}^{+}}\int_{\mathbb{R}^{+}}\left(u\left(t,\,x,\,y;\,W^{0},\,B^{0},\,\mathcal{G}^0\right)\right)^{2}dydt\right] \\
& & \qquad \leq \mathbb{E}\Bigg[\int_{\mathbb{R}^{+}}\int_{\mathbb{R}^{+}}M(t) p_{t}\left(y\,|\,B^{0},\,\mathcal{G}^0\right) \\
& & \qquad \qquad \qquad \qquad \times \mathbb{E}^{2}\left[u^{h\left(\sigma^1\right)}\left(t,x; W^{0},\mathcal{G}^0\right)\,\Big|\,W^{0},\,\sigma_{t}^{1}=y,\,B^{0},\,\mathcal{G}^0\right]dydt\Bigg] \\
& & \qquad \leq \mathbb{E}\Bigg[\int_{\mathbb{R}^{+}}M(t)\int_{\mathbb{R}^{+}}p_{t}\left(y\,|\,B^{0},\,\mathcal{G}^0\right) \nonumber \\
& & \qquad \qquad \qquad \qquad \times \mathbb{E}\left[\left(u^{h\left(\sigma^1\right)}\left(t,x; W^{0},\mathcal{G}^0\right)\right)^2\,\Big|\,W^{0},\,\sigma_{t}^{1}=y,\,B^{0},\,\mathcal{G}^0\right]dydt\Bigg] \\
& & \qquad = \mathbb{E}\left[\int_{\mathbb{R}^{+}}M(t)\mathbb{E}\left[\left(u^{h\left(\sigma^1\right)}\left(t,x; W^{0},\mathcal{G}^0\right)\right)^2\,\Big|\,W^{0},\,B^{0},\,\mathcal{G}^0\right]dt\right] \\
& & \qquad = \int_{\mathbb{R}^{+}}M(t)\mathbb{E}\left[\left(u^{h\left(\sigma^1\right)}\left(t,x; W^{0},\mathcal{G}^0\right)\right)^2\right]dt,
\end{eqnarray*}
since we can use the maximum principle given in Lemma~E2.2 from \cite{ERR}, the integrability of $M(\cdot)$ and the Dominated Convergence Theorem to show that the RHS of the last line tends to zero as $x \to 0^{+}$. The proof of the Theorem is now complete.
\end{proof}

\begin{rem}\label{rmkintegr}
Comparing the above proof with that of Theorem E1.2 in \cite{ERR} (in the case where $C_1$ is deterministic), we see that obtaining $H_0$ regularity when $W^0$ and $B^0$ are correlated is possible here because the bound $M(t)$ is deterministic, something we didn't have for the bounds obtained in the CIR volatility case. For the same reason, it is the randomness of $\displaystyle{M_{B^{0},\, \mathcal{G}^0}^{\alpha}(\cdot) = \sup_{y \in \mathbb{R}}\left(|y|^{\alpha}p_{\cdot}(y\,|\,B^{0},\,\mathcal{G}^0)\right)}$ which does not allow for $|y|^{\alpha}$-weighted integrability to be established for the derivative $u_x$ for $\alpha > 0$, even though we can have this for $u$ itself by using that $\left\Vert u(t,\,\cdot)\right\Vert_{L^{2}(\mathbb{R}^{+})}^{2} \leq \left\Vert u_{0}\right\Vert_{L^{2}(\mathbb{R}^{+})}^{2}$ for almost all $\left(t, \omega^0\right) \in \left[0, \, T\right] \times \Omega^0$ by Theorem~\ref{thm:4.1}. We will see that the regularity and the weighted integrability obtained here are exactly what we need for proving the results of the next two sections.  
\end{rem}

Now that we have a regular density $u(t,\,x,\,y) = u\left(t,\,x,\,y;\,W^{0},\,B^{0},\,\mathcal{G}^0\right)$ for the measure-valued process $v_{t}$, we can substitute $\int_{\mathbb{R}^{2}}f\cdot dv_{t}=\int_{\mathbb{R}^{2}}f(x,y)u(t,\,x,\,y)dxdy$
in the distributional SPDE of Theorem~\ref{thm:2.6} and integrate by parts to obtain
\begin{eqnarray}
u(t,\,x,\,y)&=& U_{0}(x, \, y \,|\,\mathcal{G}^0)-r\int_{0}^{t}\left(u(s,\,x,\,y)\right)_{x}ds \nonumber \\
& & \qquad +\frac{1}{2}\int_{0}^{t}h^{2}(y)\left(u(s,\,x,\,y)\right)_{x}ds-k\theta\int_{0}^{t}
\left(u(s,\,x,\,y)\right)_{y}ds \nonumber \\
& & \qquad +k\int_{0}^{t}\left(yu(s,\,x,\,y)\right)_{y}ds+\frac{1}{2}\int_{0}^{t}h^{2}(y)\left(u(s,\,x,\,y)\right)_{xx}ds
\nonumber \\
& & \qquad +\xi_{1}\rho_{3}\rho_{1}\rho_{2}\int_{0}^{t}\left(h\left(y\right)q(y)u(s,\,x,\,y)\right)_{xy}ds \nonumber \\
& & \qquad +\frac{\xi^{2}}{2}\int_{0}^{t}\left(q^2(y)u(s,\,x,\,y)\right)_{yy}ds \nonumber \\ 
& & \qquad -\rho_{1}\int_{0}^{t}h(y)\left(u(s,\,x,\,y)\right)_{x}dW_{s}^{0}\nonumber \\
& & \qquad -\xi\rho_{2}\int_{0}^{t}\left(q(y)u(s,\,x,\,y)\right)_{y}dB_{s}^{0}, \label{eq:4.12}
\end{eqnarray}
where $U_0(x, \, y \,|\,\mathcal{G}^0)$ stands for the initial density. This is the SPDE satisfied by the two-dimensional density $u(t,\,x,\,y)$, in which the second derivative in $x$ and the derivatives in $y$ are considered in a distributional sense (over the test space $C_{0}^{test}$ defined in Theorem~\ref{thm:2.6}, with an extra vanishing condition for $y \to \pm \infty$ to ensure that integration by parts with respect to $y$ does not leave boundary terms). 

\section{Higher regularity of initial-boundary value problem solutions}

In this section we establish even higher regularity for solutions to the stochastic initial-boundary value
problem satisfied by the density $u(t,\,x,\,y)$, which is needed for proving the crucial uniqueness result later. The proof of this result is interesting as it shows that the $|y|^{\alpha}$-weighted integrability obtained in the previous section for $u$ but not for its derivative $u_x$ (see Remark~\ref{rmkintegr}) is precisely what we need to cope with the unbounded coefficient $k(\theta - z)$, where the reason for this is that this problematic coefficient does not go with $u_x$ anywhere in the SPDE. All the lemmas presented here, which are needed for proving the main result of this section, are proved in the appendix, as they are minor modifications of lemmas used in \cite{HK17,ERR}. To simplify the notation of the previous section, we set $\tilde{L}_{\alpha}^2 = L_{|y|^{\alpha}}^{2}\left(\mathbb{R}^{+}
\times\mathbb{R}\right)$ and $\tilde{L}^2_{0,w} = L_{w^2(x)}^{2}\left(\mathbb{R}^{+}
\times\mathbb{R}\right)$. 

Before stating our result, we need to define our initial-boundary value problem explicitly. We give the following definition of an $\alpha$-solution to our problem for $\alpha\geq0$, the properties of which are all satisfied by the density function $u$ for all positive values of $\alpha$, as we have shown in the previous section.

\begin{defn}\label{problemdef}
For an  $\rho \in \mathbb{R}$, let $q: \mathbb{R} \mapsto \mathbb{R}^{+}$ and $h: \mathbb{R} \mapsto \mathbb{R}^{+}$ be continuous functions, and $U_0$ be a random function such that $U_{0}\in L^{2}\left(\Omega^0;\,\tilde{L}_{\alpha}^{2}
\right)$ and $\left(U_{0}\right)_{x}\in L^{2}\left(\Omega^0;\,\tilde{L}_{0, w}^{2}
\right)$ for some $\alpha > 0$.
Given $\rho$, $\alpha$ and the functions $q$, $h$ and $U_{0}$, we say that $u$ is an $\alpha$-solution to our problem when the following are satisfied;

\begin{enumerate}
\item   $u$ is adapted to the filtration $\{\sigma\left(\mathcal{G}^0,\,W_{t}^{0},\,B_{t}^{0}\right):\,t\geq0\}$
and belongs to the space $L_{\alpha}\cap H_{0}$, where $L_{\alpha}$
and $H_{0}$ are defined in Section~4.
\item  $u$ satisfies the SPDE
\begin{eqnarray}\label{finalspde}
u(t,x,y)&=& U_{0}(x,\,y)-r\int_{0}^{t}\left(u(s,x,y)\right)_{x}ds \nonumber \\
&& \qquad +\frac{1}{2}\int_{0}^{t}h^{2}(y)\left(u(s,x,y)\right)_{x}ds-k\theta\int_{0}^{t}\left(u(s,x,y)\right)_{y}ds 
\nonumber \\
&& \qquad +k\int_{0}^{t}\left(yu(s,\,x,\,y)\right)_{y}ds+\frac{1}{2}\int_{0}^{t}h^{2}(y)\left(u(s,x,y)\right)_{xx}ds
\nonumber \\
& & \qquad +\rho\int_{0}^{t}\left(h\left(y\right)q(y)u(s,\,x,\,y)\right)_{xy}ds \nonumber \\
& & \qquad +\frac{\xi^{2}}{2}\int_{0}^{t}\left(q^2(y)u(s,x,y)\right)_{yy}ds-\rho_{1}\int_{0}^{t}h(y)
\left(u(s,x,y)\right)_{x}dW_{s}^{0} \nonumber \\
& & \qquad -\xi\rho_{2}\int_{0}^{t}\left(q(y)u(s,x,y)\right)_{y}dB_{s}^{0}, \label{eq:5.1}
\end{eqnarray}
for all $x\geq0$ and $y \in \mathbb{R}$, where $u_{y}$, $u_{yy}$
and $u_{xx}$ are considered in the distributional sense over the space of test functions
\begin{eqnarray}
\tilde{C}_{0}^{test}=\{g\in C_{b}^{2}(\mathbb{R}^{+}\times\mathbb{R})&:&\,g(0,y) = 0 \;\forall y \in\mathbb{R}, \nonumber \\
&&\,\lim_{z \to \pm \infty}g(x,z) = \lim_{z \to \pm \infty}g_y(x,z) = 0 \;\forall x \in \mathbb{R}^{+}\}. \nonumber
\end{eqnarray}
\end{enumerate}
\end{defn}

\begin{rem}
For $\rho = \xi\rho_{3}\rho_{1}\rho_{2}$, where $\rho_3$ denotes the correlation coefficient between $W_{t}^0$ and $B_{t}^0$, we obtain the SPDE \eqref{eq:4.12}.
\end{rem}

We proceed now to our improved regularity result.

\begin{thm}\label{thm:5.2}
Fix the real number $\rho$ and the initial data function $U_{0}$. Let $u$ be an $\alpha$-solution to our problem for all $\alpha \geq 0$, where $q$ satisfies the conditions of Assumption~\ref{ass1} and $h$ is as in Theorem~\ref{thm:4.2}. Then, the weak derivative $u_{y}$ of $u$ exists and we have
    \[ 
    u_{y}\in L^{2}\left(\left[0,\,T\right]\times\Omega^0;\,\tilde{L}^2_{0, w}\right)
    \]
\end{thm}

\begin{rem}\label{2ndorderrem}
This result shows that any $u$ satisfying the conditions of Definition \ref{problemdef} for all $\alpha \geq 0$ is also weakly differentiable in $y$, and $u_y$ has good integrability properties just like $u_x$. It is possible that higher order derivatives belonging to certain weighted $L^2$ spaces exist as well, but this is something we will not investigate in this paper 
\end{rem}

The proof of Theorem~\ref{thm:5.2} is almost identical to the proof of \cite[Theorem 5.2]{HK17}, and we only need to adapt each step to the new properties of the vol-of-vol function $q$ which is not the square root function here. The idea is to test the SPDE against a heat kernel composed with an appropriate function, and derive an estimate involving some weighted $L^2$ norms of smooth approximations of $u$ and its derivatives. As we did in \cite{HK17}, we compose the smoothing heat kernel with a function which maps our volatility process to a constant volatility diffusion. This leads to the elimination of some exploding terms as $\epsilon \to 0^{+}$, allowing us to establish finiteness of certain weighted Sobolev norms. Therefore, we test our SPDE against
\[
\phi_{\epsilon}(z,\,y)=\frac{1}{\sqrt{2\pi\epsilon}}e^{-\frac{(Q(z)-y)^{2}}{2\epsilon}},
\;y,\,z\in\mathbb{R},
\]
where $Q(z) = \int_{z_0}^{z}\frac{1}{q(z')}dz'$ for some $z_0 \in \mathbb{R}$, as $\phi_{\epsilon} \in \tilde{C}_{0}^{test}$. 
%Then, manipulating the identity mentioned above appropriately and taking $\epsilon \to 0^{+}$, leads to the desired result since it can be shown that $\phi_{\epsilon}(z,\,y)$ has nice convergence properties. The composition with $Q$ leads to the elimination of some exploding terms in our identity (as $\epsilon \to 0^{+}$), which would have appeared if we had composed our heat kernel with a different function. As in \cite{HK17}, the intuition behind the choice of this composition is that our solution is expected to be the density of a law describing a volatility process in the $y$-direction, which is transformed into a process of a constant volatility by the mapping we are composing our heat kernel with.

Of course, we need to show that our kernel possesses nice smoothing and convergence (as $\epsilon \to 0^{+}$) properties. This is given in the following modifications of \cite[Lemma 5.3]{HK17} and \cite[Lemma 5.4]{HK17}, the proofs of which are significantly easier and are discussed in the Appendix (subsections~\ref{pf5.5} and \ref{pf5.6}).

\begin{lem}\label{lem:5.3}
Let $\left(\Lambda, \, \mu \right)$ be a measure space. For any function $u$ supported in $\Lambda \times \mathbb{R}$ we define
the functions
\[
J_{u,\epsilon}(\lambda,\,y)=\int_{\mathbb{R}}u(\lambda,\,z)\phi_{\epsilon}(z,\,y)dz
\]
and
\[
J_{u}(\lambda, \,y) = q\left(Q^{-1}(y)\right)u(\lambda,\,Q^{-1}(y)) \qquad y \in \mathbb{R} 
\]
Suppose now that $J_{u} \in L^{2}\left(\left(\Lambda, \, \mu \right);\,L^{2}\left(\mathbb{R}
\right)\right)$. Then we have the following regularity and convergence results;
\begin{enumerate}
\item $J_{u,\epsilon}(\cdot,\,\cdot)$ is smooth and for all $n \in \mathbb{N}$ we have
\[\frac{\partial^{n}}{\partial y^{n}}J_{u,\epsilon}(\cdot,\,\cdot) \in L^{2}\left(\left(\Lambda, \, \mu \right);\,L^{2}\left(\mathbb{R}
\right)\right);
\]
\item $J_{u,\epsilon}(\cdot,\,\cdot)\to J_{u}(\cdot,\,\cdot)$
strongly in $L^{2}\left(\left(\Lambda, \, \mu \right);\,L^{2}\left(\mathbb{R}
\right)\right)$, as $\epsilon\to0^{+}$.
\end{enumerate}.
\end{lem}

\begin{lem}\label{lem:5.4}
In the notation of lemma~\ref{lem:5.3}, assume that there exists a constant $C>0$ and an $n\in\mathbb{N}$ such that for any $\epsilon>0$ we have
\begin{equation}\label{eq:5.5}
\left\Vert \frac{\partial^{l}}{\partial y^{l}}J_{u,\epsilon}\left(\cdot,\,\cdot\right)\right\Vert _{L^{2}\left(\left(\Lambda, \, \mu \right);\,L^{2}\left(\mathbb{R}
\right)\right)}^{2} \leq C
\end{equation}
for some function $u$ supported in $\Lambda \times \mathbb{R}$ and all $l\in\{1,\,2,\,\ldots,\,n\}$. Then we have that $\frac{\partial^{l}}{\partial y^{l}}J_{u}\in L^{2}\left(\left(\Lambda, \, \mu \right);\,L^{2}\left(\mathbb{R}
\right)\right)$ and also $\frac{\partial^{l}}{\partial y^{l}}J_{u,\epsilon}\to\frac{\partial^{l}}{\partial y^{l}}J_{u}$
strongly in $L^{2}\left(\left(\Lambda, \, \mu \right);\,L^{2}\left(\mathbb{R}
\right)\right)$
as $\epsilon\to0^{+}$, for all $l\in\{1,\,2,\,\ldots,\,n\}$.
\end{lem}

Writing $\mu_{X}$ and $\mu_{X}^w$ for the standard Lebesgue measure on $X \subset \mathbb{R}$ and the weighted Lebesgue measure on $X \subset \mathbb{R}$ with weight $w^2(x)$ respectively, we can write %will mainly use the above lemmas for $\Lambda = \Omega^0 \times \mathbb{R}^{+}$ and $\Lambda = \left[0, \, t \right] \times \Omega^0 \times \mathbb{R}^{+}$ for $t \geq 0$, which are equipped with the corresponding product measures, where $\Omega^0$ is equipped with the measure $\mathbb{P}^0\left(\cdot\right)$, $\left[0, \, t \right]$ is equipped with the standard Lebesgue measure, and $\mathbb{R}^{+}$ is equipped with the weighted Lebesgue measure with weight $1$ or $w^2(x)$. Moreover, in all these situations we will have $\delta' = 0$. Therefore, in the notation we introduced at the beginning of this section, the two lemmas will mainly be used for functions in the spaces 
\begin{equation}
L^{2}\left(\Omega^0;\,\tilde{L}_{0}^{2}\right) = L^{2}\left(\left(\Omega^0 \times \mathbb{R}^{+}, \, \mathbb{P}^{0} \times \mu_{\mathbb{R}^{+}}\right);\,L^{2}\left(\mathbb{R}
\right)\right), \nonumber  
\end{equation}
\begin{equation}
L^{2}\left(\Omega^0;\,\tilde{L}_{0, w}^{2}\right) = L^{2}\left(\left(\Omega^0 \times \mathbb{R}^{+}, \, \mathbb{P}^{0} \times \mu_{\mathbb{R}^{+}}^{w}\right);\,L^{2}\left(\mathbb{R}
\right)\right), \nonumber
\end{equation}
\begin{eqnarray}
L^{2}\left(\left[0, \, t \right] \times \Omega^0;\,\tilde{L}_{0}^{2}\right) = L^{2}\left(\left(\left[0, \, t \right] \times \Omega^0 \times \mathbb{R}^{+}, \, \mu_{\left[0, \, t \right]} \times \mathbb{P}^{0} \times \mu_{\mathbb{R}^{+}}\right);\,L^{2}\left(\mathbb{R}
\right)\right), \nonumber
\end{eqnarray}
and
\begin{equation}
L^{2}\left(\left[0, \, t \right] \times \Omega^0;\,\tilde{L}_{0, w}^{2}\right) = L^{2}\left(\left(\left[0, \, t \right] \times \Omega^0 \times \mathbb{R}^{+}, \, \mu_{\left[0, \, t \right]} \times \mathbb{P}^{0} \times \mu_{\mathbb{R}^{+}}^{w}\right);\,L^{2}\left(\mathbb{R}
\right)\right), \nonumber
\end{equation}
which are the spaces on which the above two lemmas will be used.

Now for an $\alpha$-solution $u$ to our problem for all $\alpha \geq 0$ we set:
\[
I_{\epsilon,g(z)}(s,\,x,\,y)=\int_{\mathbb{R}}g(z)u(s,\,x,\,z)\phi_{\epsilon}(z,\,y)dz
\]
for any function $g$ of $z$ and $\epsilon > 0$, and we have the following result

\begin{lem}\label{mainid}
For the $\alpha$-solution $u$ we have the following identity
\begin{eqnarray}\label{mainideq}
& & \left\Vert I_{\epsilon,1}(t,\,\cdot)\right\Vert _{L^{2}\left(\Omega^0;\,\tilde{L}_{0, w}^2\right)}^{2}  \nonumber \\ & & \qquad = \left\Vert \int_{\mathbb{R}}U_{0}(\cdot,\,z)\phi_{\epsilon}(z,\,\cdot)dz\right\Vert _{L^{2}\left(\Omega^0;\,\tilde{L}_{0, w}^2\right)}^{2} \nonumber \\
& & \qquad \qquad\qquad +r\int_{0}^{t}\left\Vert \mathbb{I}_{\left[0, \, 1\right] \times \mathbb{R}}(\cdot)I_{\epsilon,1}(s,\cdot)\right\Vert _{L^2\left(\Omega^0 ; \, \tilde{L}_{0}^2\right)}^2ds \nonumber \\
& & \qquad \qquad \qquad+\int_{0}^{t}\left\langle \frac{\partial}{\partial x}I_{\epsilon,h^{2}(z)}(s,\,\cdot),\,I_{\epsilon,1}(s,\,\cdot)\right\rangle _{L^{2}\left(\Omega^0;\,\tilde{L}_{0, w}^2\right)}ds \nonumber \\
& & \qquad \qquad\qquad+2k\theta\int_{0}^{t}\left\langle I_{\epsilon, Q'(z)}(s,\,\cdot),\,\frac{\partial}{\partial y}I_{\epsilon,1}(s,\,\cdot)\right\rangle _{L^{2}\left(\Omega^0;\,\tilde{L}_{0, w}^2\right)}ds \nonumber \\
& & \qquad\qquad\qquad -\xi^{2}\int_{0}^{t}\left\langle I_{\epsilon,q'(z)}(s,\,\cdot),\,\frac{\partial}{\partial y}I_{\epsilon,1}(s,\,\cdot)\right\rangle _{L^{2}\left(\Omega^0;\,\tilde{L}_{0, w}^2\right)}ds \nonumber \\
& & \qquad\qquad\qquad -2k\int_{0}^{t}\left\langle I_{\epsilon,zQ'(z)}(s,\,\cdot),\,\frac{\partial}{\partial y}I_{\epsilon,1}(s,\,\cdot)\right\rangle _{L^{2}\left(\Omega^0;\,\tilde{L}_{0, w}^2\right)}ds \nonumber \\
& & \qquad\qquad\qquad -\int_{0}^{t}\left\langle \frac{\partial}{\partial x}I_{\epsilon,h^{2}(z)}(s,\,\cdot),\,\frac{\partial}{\partial x}I_{\epsilon,1}(s,\,\cdot)\right\rangle _{L^{2}\left(\Omega^0;\,\tilde{L}_{0, w}^2\right)}ds \nonumber \\
& & \qquad\qquad\qquad -\int_{0}^{t}\left\langle \frac{\partial}{\partial x}I_{\epsilon,h^{2}(z)}(s,\,\cdot),\,\mathbb{I}_{\left[0, \, 1\right] \times \mathbb{R}}(\cdot)I_{\epsilon,1}(s,\,\cdot)\right\rangle _{L^{2}\left(\Omega^0;\,\tilde{L}_{0}^2\right)}ds \nonumber \\
& & \qquad \qquad\qquad+\rho_{1}^{2}\int_{0}^{t}\left\Vert \frac{\partial}{\partial x}I_{\epsilon,h(z)}(s,\,\cdot)\right\Vert _{L^{2}\left(\Omega^0;\,\tilde{L}_{0, w}^2\right)}^{2}ds
\nonumber \\
& & \qquad\qquad\qquad -\xi^{2}\left(1-\rho_{2}^{2}\right)\int_{0}^{t}\left\Vert \frac{\partial}{\partial y}I_{\epsilon,1}(s,\,\cdot)\right\Vert _{L^{2}\left(\Omega^0;\,\tilde{L}_{0, w}^2\right)}^{2}ds \nonumber \\
& & \qquad\qquad\qquad - 2\left(\rho - \xi\rho_{3}\rho_{1}\rho_{2}\right) \nonumber \\
& & \qquad \qquad \qquad\qquad\qquad\times\int_{0}^{t}\left\langle \frac{\partial}{\partial x}I_{\epsilon,h\left(z\right)}(s,\cdot),\,\frac{\partial}{\partial y}I_{\epsilon,1}(s,\cdot)\right\rangle _{L^2\left(\Omega^0; \, \tilde{L}_{0, w}^2\right)}ds. 
\label{eq:5.17}
\end{eqnarray}
All the terms in the above identity are finite.
\end{lem}

The proof of the above lemma can also be found in the appendix (subsection~\ref{pf5.7}). We are now ready to prove our main result

\begin{proof}[\textbf{Proof of Theorem 5.3}]
For all the inner products in identity \eqref{mainideq} given in Lemma~\ref{mainid}, except the first and the tenth, we can use the Cauchy-Schwartz inequality and then Young's inequality, $ab\leq\frac{a^{2}}{4C}+Cb^{2}$, for the products of norms in order to obtain
\begin{eqnarray}
&&\left\Vert I_{\epsilon,1}(t,\,\cdot)\right\Vert _{L^{2}\left(\Omega^0;\,\tilde{L}_{0,w}^2\right)}^{2} \nonumber \\
&& \qquad \leq \left\Vert \int_{\mathbb{R}}U_{0}(\cdot,\,z)\phi_{\epsilon}(z,\,\cdot)dz\right\Vert _{L^{2}\left(\Omega^0;\,\tilde{L}_{0,w}^2\right)}^{2} +r\int_{0}^{t}\left\Vert \mathbb{I}_{\left[0, \, 1\right] \times \mathbb{R}}(\cdot)I_{\epsilon,1}(s,\cdot)\right\Vert _{L^2\left(\Omega^0 ; \, \tilde{L}_{0}^2\right)}^2ds \nonumber \\
&& \qquad \qquad \qquad+\int_{0}^{t}\left\langle \frac{\partial}{\partial x}I_{\epsilon,h^{2}(z)}(s,\,\cdot),\,I_{\epsilon,1}(s,\,\cdot)\right\rangle _{L^{2}\left(\Omega^0;\,\tilde{L}_{0,w}^2\right)}ds \nonumber \\
&& \qquad \qquad \qquad +k\theta\int_{0}^{t}C\left\Vert I_{\epsilon,Q'(z)}(s,\,\cdot)\right\Vert _{L^{2}\left(\Omega^0;\,\tilde{L}_{0,w}^2\right)}^{2}ds \nonumber \\
&& \qquad\qquad\qquad+k\theta\int_{0}^{t}\frac{1}{C}\left\Vert \frac{\partial}{\partial y}I_{\epsilon,1}(s,\,\cdot)\right\Vert _{L^{2}\left(\Omega^0;\,\tilde{L}_{0,w}^2\right)}^{2}ds 
\nonumber \\
&& \qquad \qquad \qquad+\xi^{2}\int_{0}^{t}C\left\Vert I_{\epsilon, q'(z)}(s,\,\cdot)\right\Vert _{L^{2}\left(\Omega^0;\,\tilde{L}_{0,w}^2\right)}^{2}ds \nonumber \\
&& \qquad\qquad\qquad +\xi^{2}\int_{0}^{t}\frac{1}{4C}\left\Vert \frac{\partial}{\partial y}I_{\epsilon,1}(s,\,\cdot)\right\Vert _{L^{2}\left(\Omega^0;\,\tilde{L}_{0,w}^2\right)}^{2}ds \nonumber \\
&& \qquad \qquad \qquad+k\int_{0}^{t}C\left\Vert I_{\epsilon,zQ'(z)}(s,\,\cdot)\right\Vert _{L^{2}\left(\Omega^0;\,\tilde{L}_{0,w}^2\right)}^{2}ds \nonumber \\
&& \qquad\qquad\qquad +k\int_{0}^{t}\frac{1}{C}\left\Vert \frac{\partial}{\partial y}I_{\epsilon,1}(s,\,\cdot)\right\Vert _{L^{2}\left(\Omega^0;\,\tilde{L}_{0,w}^2\right)}^{2}ds
\nonumber \\
&& \qquad \qquad \qquad-\int_{0}^{t}\left\langle \frac{\partial}{\partial x}I_{\epsilon,h^{2}(z)}(s,\,\cdot),\,\frac{\partial}{\partial x}I_{\epsilon,1}(s,\,\cdot)\right\rangle _{L^{2}\left(\Omega^0;\,\tilde{L}_{0,w}^2\right)}ds \nonumber \\
&& \qquad\qquad\qquad +\rho_{1}^{2}\int_{0}^{t}\left\Vert \frac{\partial}{\partial x}I_{\epsilon,h(z)}(s,\,\cdot)\right\Vert _{L^{2}\left(\Omega^0;\,\tilde{L}_{0,w}^2\right)}^{2}ds \nonumber \\
&& \qquad \qquad \qquad-\int_{0}^{t}\left\langle \frac{\partial}{\partial x}I_{\epsilon,h^{2}(z)}(s,\,\cdot),\,\mathbb{I}_{\left[0, \, 1\right] \times \mathbb{R}}(\cdot)I_{\epsilon,1}(s,\,\cdot)\right\rangle _{L^{2}\left(\Omega^0;\,\tilde{L}_{0}^2\right)}ds \nonumber \\
&& \qquad\qquad\qquad -\xi^{2}\left(1-\rho_{2}^{2}\right)\int_{0}^{t}\left\Vert \frac{\partial}{\partial y}I_{\epsilon,1}(s,\,\cdot)\right\Vert _{L^{2}\left(\Omega^0;\,\tilde{L}_{0,w}^2\right)}^{2}ds \nonumber \\
&& \qquad \qquad \qquad
+ C\left|\rho - \xi\rho_{3}\rho_{1}\rho_{2}\right| \int_{0}^{t}\left\Vert \frac{\partial}{\partial x}I_{\epsilon,h(z)}(s,\,\cdot)\right\Vert _{L^{2}\left(\Omega^0;\,\tilde{L}_{0,w}^2\right)}^{2}ds \nonumber \\
&& \qquad\qquad\qquad + \frac{1}{C}\left|\rho - \xi\rho_{3}\rho_{1}\rho_{2}\right| \int_{0}^{t}\left\Vert \frac{\partial}{\partial y}I_{\epsilon,1}(s,\,\cdot)\right\Vert _{L^{2}\left(\Omega^0;\,\tilde{L}_{0,w}^2\right)}^{2}ds 
\label{eq:5.18}
\end{eqnarray}
and this for any $C > 0$. Taking a large enough $C$ such that
\begin{eqnarray*}
\left(k\theta + \frac{\xi^{2}}{4}+\left|\rho - \xi\rho_{3}\rho_{1}\rho_{2}\right|+k\right)\frac{1}{C}<\xi^{2}\left(1-\rho_{2}^{2}\right),
\end{eqnarray*}
from \eqref{eq:5.18} we can derive an estimate of the form
\begin{eqnarray}
& & \left\Vert I_{\epsilon,1}(t,\,\cdot)\right\Vert _{L^{2}\left(\Omega^0;\,\tilde{L}_{0,w}^2\right)}^{2}+M_1\int_{0}^{t}\left\Vert \frac{\partial}{\partial y}I_{\epsilon,1}(s,\,\cdot)\right\Vert _{L^{2}\left(\Omega^0;\,\tilde{L}_{0,w}^2\right)}^{2}ds \nonumber \\
& & \qquad \leq\left\Vert \int_{\mathbb{R}}U_{0}(\cdot,\,z)\phi_{\epsilon}(z,\,\cdot)dz\right\Vert _{L^{2}\left(\Omega^0;\,\tilde{L}_{0,w}^2\right)}^{2} + M_2\int_{0}^{t}\left\Vert \frac{\partial}{\partial x}I_{\epsilon,h(z)}(s,\,\cdot)\right\Vert _{L^{2}\left(\Omega^0;\,\tilde{L}_{0,w}^2\right)}^{2}ds \nonumber \\
& & \qquad \qquad +M_2\sum_{g(z)\in\left\{ Q'(z), zQ'(z), q'(z)\right\} }\int_{0}^{t}\left\Vert I_{\epsilon,g(z)}(s,\,\cdot)\right\Vert _{L^{2}\left(\Omega^0;\,\tilde{L}_{0,w}^2\right)}^{2}ds \nonumber \\
& & \qquad \qquad +r\int_{0}^{t}\left\Vert \mathbb{I}_{\left[0, \, 1\right] \times \mathbb{R}}(\cdot)I_{\epsilon,1}(s,\cdot)\right\Vert _{L^{2}\left(\Omega^0;\,\tilde{L}_{0}^2\right)}^2ds \nonumber \\
&& \qquad \qquad -\int_{0}^{t}\left\langle \frac{\partial}{\partial x}I_{\epsilon,h^{2}(z)}(s,\,\cdot),\,\mathbb{I}_{\left[0, \, 1\right] \times \mathbb{R}}(\cdot)I_{\epsilon,1}(s,\,\cdot)\right\rangle _{L^{2}\left(\Omega^0;\,\tilde{L}_{0}^2\right)}ds \nonumber \\
& & \qquad \qquad +\int_{0}^{t}\left\langle \frac{\partial}{\partial x}I_{\epsilon,h^{2}(z)}(s,\,\cdot),\,I_{\epsilon,1}(s,\,\cdot)\right\rangle _{L^{2}\left(\Omega^0;\,\tilde{L}_{0,w}^2\right)}ds \nonumber \\
& & \qquad \qquad -\int_{0}^{t}\left\langle \frac{\partial}{\partial x}I_{\epsilon,h^{2}(z)}(s,\,\cdot),\,\frac{\partial}{\partial x}I_{\epsilon,1}(s,\,\cdot)\right\rangle _{L^{2}\left(\Omega^0;\,\tilde{L}_{0,w}^2\right)}ds, \label{eq:5.19}
\end{eqnarray}
for some positive constants $M_1$ and $M_2$. Then, with the notation of Lemma~\ref{lem:5.3} we have $I_{\epsilon,g(z)}=J_{g\cdot u,\epsilon}$ and $\frac{\partial}{\partial x}I_{\epsilon,g(z)}=J_{g\cdot u_{x},\epsilon}$ for any function $g$, and by using that lemma, the above two quantities can be shown to converge in $L^{2}\left(\left[0, \, t \right] \times \Omega^0;\,\tilde{L}_{0,w}^{2}\right)$ to
\[
J_{g\cdot u}(s,\,x,\,v)=g\left(Q^{-1}(v)\right)q\left(Q^{-1}(v)\right)u\left(s,\,x,\,Q^{-1}(v)\right)
\]
and 
\[
J_{g\cdot u_{x}}(s,\,x,\,v)=g\left(Q^{-1}(v)\right)q\left(Q^{-1}(v)\right)u_{x}\left(s,\,x,\,Q^{-1}(v)\right)
\]
respectively as $\epsilon \to 0^{+}$, provided that the last two quantities belongs to the space $L^{2}\left(\left[0, \, t \right] \times \Omega^0;\,\tilde{L}_{0,w}^{2}\right)$. To verify this 
we use the transformation $v \mapsto Q(v)$ (with the range of $Q$ being clearly $\mathbb{R}$) to compute
\begin{eqnarray}
&&\left\Vert J_{g\cdot u}\right\Vert _{L_{w^2(x)}^{2}\left(\left[0,\,t\right]\times\Omega^0\times\mathbb{R}^{+}\times \mathbb{R}\right)}^{2} \nonumber \\
&& \qquad = \int_{0}^{t}\mathbb{E}\left[\int_{\mathbb{R}^+}\int_{\mathbb{R}}w^2(x)g^2\left(y\right)q\left(y\right)u^2\left(s,\,x,\,y\right)dvdx\right]ds \label{eq:5.20}
\end{eqnarray}
and
\begin{eqnarray}
&&\left\Vert J_{g\cdot u_x}\right\Vert _{L_{w^2(x)}^{2}\left(\left[0,\,t\right]\times\Omega^0\times\mathbb{R}^{+}\times \mathbb{R}\right)}^{2} \nonumber \\
&& \qquad = \int_{0}^{t}\mathbb{E}\left[\int_{\mathbb{R}^+}\int_{\mathbb{R}}w^2(x)g^2\left(y\right)q\left(y\right)u_{x}^2\left(s,\,x,\,y\right)dvdx\right]ds \label{eq:5.21}
\end{eqnarray}
where \eqref{eq:5.20} is always finite as all functions $g$ in \eqref{eq:5.19} which are paired with non-derivative terms have polynomial growth, and the same holds for \eqref{eq:5.21} as well since all $g$ in \eqref{eq:5.19} which are paired with derivative terms are bounded. 

Recalling now that the inner products are continuous, we deduce that all the terms in the RHS of \eqref{eq:5.19} are convergent as $\epsilon\to 0^{+}$, and thus the RHS of \eqref{eq:5.19} is bounded in $\epsilon$. This allows us to apply Lemma~\ref{lem:5.4} to the $y$-derivative term in the LHS of \eqref{eq:5.19} and deduce that $\frac{\partial}{\partial v}J_{u} = \frac{\partial}{\partial v}q\left(Q^{-1}(v)\right)u\left(s,\,x,\,Q^{-1}(v)\right)$
exists in $L^{2}\left(\left[0, \, t \right] \times \Omega^0;\,\tilde{L}_{0,w}^{2}\right)$ and that, in that space, we have $\frac{\partial}{\partial v}I_{\epsilon,1} = \frac{\partial}{\partial v}J_{u,\epsilon} \to \frac{\partial}{\partial v}J_u$ as $\epsilon \to 0^{+}$. It follows then that the weak derivative $\frac{\partial}{\partial y}u\left(s,\,x,\,y\right)$
exists, and by using the product rule and the inequality $(a - b)^2 \leq 2a^2 + 2b^2$ we can obtain
\begin{eqnarray}
& & \int_{0}^{t}\mathbb{E}\left[\int_{\mathbb{R}^+}\int_{\mathbb{R}}u_{y}^{2}(s,\,x,\,y)dydx\right]ds \nonumber \\
&& \qquad=\int_{0}^{t}\mathbb{E}\left[\int_{\mathbb{R}^+}\int_{\mathbb{R}}q\left(Q^{-1}(v)\right)u_{y}^{2}\left(s,\,x,\,Q^{-1}(v)\right)dvdx\right]ds \nonumber \\
&& \qquad=\int_{0}^{t}\mathbb{E}\left[\int_{\mathbb{R}^+}\int_{\mathbb{R}}q^{-1}\left(Q^{-1}(v)\right)\left(\frac{\partial}{\partial v}\left(u\left(s,\,x,\,Q^{-1}(v)\right)\right)\right)^2dvdx\right]ds \nonumber \\
&&  \qquad \leq 2\int_{0}^{t}\int_{\mathbb{R}^{+}}\int_{\mathbb{R}} \mathbb{E}\left[q^{-3}\left(Q^{-1}(v)\right)\left(\frac{\partial}{\partial v}\left(q\left(Q^{-1}(v)\right)u\left(s,\,x,\,Q^{-1}(v)\right)\right)\right)^2\right]dvdxds \nonumber \\
&&  \qquad \qquad + 2\int_{0}^{t}\int_{\mathbb{R}^{+}}\int_{\mathbb{R}}\mathbb{E}\left[q^{-1}\left(Q^{-1}(v)\right)\left(q'\left(Q^{-1}(v)\right)u\left(s,\,x,\,Q^{-1}(v)\right)\right)^2\right]dvdxds. \nonumber 
\label{eq:5.22}
\end{eqnarray}
Since $\frac{\partial}{\partial v}J_{u} = \frac{\partial}{\partial v}q\left(Q^{-1}(v)\right)u\left(s,\,x,\,Q^{-1}(v)\right) \in L^{2}\left(\left[0, \, t \right] \times \Omega^0;\,\tilde{L}_{0,w}^{2}\right)$, the RHS of the last expression is finite, so we finally obtain $u_{y}\in L^{2}\left(\left[0,\,T\right]\times\Omega^0;\,\tilde{L}^2_{0,w}\right)$.
\end{proof}

\begin{rem}\label{rem:5.7} As in the CIR volatility case (see \cite[Remark~5.7]{HK17}), the flexibility in the choice of $\rho$ allows for an extension of our results to the case where the idiosyncratic noises have nonzero correlation.
\end{rem}

\section{Uniqueness of solutions}

In this section we establish uniqueness of solutions to our problem (Definition \ref{problemdef}) under the assumptions of Theorem~\ref{thm:5.2}. Thus, we assume again that the function $q$ in our SPDE \eqref{finalspde} satisfies the conditions of Assumption~\ref{ass1}. As we have mentioned in Remark~\ref{rem3}, these functions are positive smooth functions behaving almost like a positive constant near infinity. Moreover, we will need to assume that the coefficients of \eqref{finalspde} satisfy the condition
\begin{eqnarray}\label{corrcondition}
\left|\rho - \xi\rho_{3}\rho_{1}\rho_{2}\right| \leq \xi\sqrt{1 - \rho^{2}_{1}}\sqrt{1 - \rho^{2}_{2}},
\end{eqnarray}
which is always satisfied when the SPDE arises from a large portfolio model (even when the idiosyncratic noises are correlated, see Remark~\ref{rem:5.7}). 

Since our SPDE \eqref{finalspde} is linear, for any two distinct solutions $u_1$ and $u_2$ with the same initial data we have that $u = u_1 - u_2$ is a non-zero solution with zero initial data. Therefore, to establish pathwise uniqueness of solutions, we pick an arbitrary $u$ that is an $\alpha$-solution to our problem with initial data $U_0 = 0$ for all $\alpha \geq 0$, and it suffices to show that $u = 0$. 

The natural approach to is to derive an optimal estimate for the $L^2$-norm of $u$ by using the SPDE \eqref{finalspde}, but for the convenience of the reader, we will first sketch this technique for a much simpler 1-dimensional SPDE with a Dirichlet boundary condition:
\begin{eqnarray}\label{simple}
u(t, x) &=& -r\int_0^tu_{x}(s, x)ds + \int_0^tu_{xx}(s, x)ds + \rho\int_0^tu_{x}(s, x)dW^0_s \\
u(t, 0) &=& 0
\end{eqnarray}
for $r \in \mathbb{R}$ and $\rho \in (-1, 1)$. Under sufficient regularity, we can use Ito's formula on $u^2(t, x)$ which, under the above dynamics, yields:
\begin{eqnarray}\label{squared}
& &u^2(t, x) = -2r\int_0^tu(s, x)u_{x}(s, x)ds \nonumber \\
& & \qquad \qquad \qquad \qquad + 2\int_0^tu(s, x)u_{xx}(s, x)ds + \rho^2\int_0^t(u_{x}(s, x))^2ds + M_t^x 
\end{eqnarray}
where $M_t^x$ is a stochastic integral that depends on $x$. Then, we take expectations to eliminate the martingale term $M_t^x$, we integrate in $x$ over $(0, +\infty)$ which kills the term $2r u(s,x)u_x(s,x) = (r u^2(s,x))_x$ due to $u(t,0) = 0$, and we use integration by parts to write $\int_{0}^{+\infty}u(s, x)u_{xx}(s, x)dx = - \int_{0}^{+\infty}u_x^2(s, x)dx$. This leads to the identity:
\begin{eqnarray}\label{energy}
\int_0^{+\infty}\mathbb{E}[u^2(t, x)]dx + (1 - \rho^2)\int_0^t\int_0^{+\infty}\mathbb{E}[u_x^2(s, x)]dxds &=& 0
\end{eqnarray}
which is sufficient to deduce that $u = 0$. However, if the existence of the second derivative $u_{xx}$ is not guaranteed, we follow the above steps on a smooth approximation of \eqref{simple}, which leads to a smoothed version of \eqref{energy} and then we can obtain \eqref{energy} by taking a limit. Moreover, if we were forced by regularity constraints to work with the weighted $L^2$ norm $\Vert u \Vert_{L^2_{w^2(x)}} = \int_0^{+\infty}w^2(x)u^2(s, x)dx$, which is the case in our original problem due to difficulties that arise if we try to smooth \eqref{finalspde} not only in $y$ but also in $x$, we would need to integrate \eqref{squared} against the weight function $w^2(x) = \min\{x, 1\}$. Then integration by parts in the integrals $-r\int_0^{+\infty}w^2(x)( u^2(s, x))_xdx$ and $2\int_{0}^{+\infty}w^2(x)u(s, x)u_{xx}(s, x)dx$ would leave the two extra residual terms $r\int_0^1 u^2(s, x)dx$ and $-u^2(s, 1)$, giving the identity:
\begin{eqnarray}\label{energy2}
\int_0^{+\infty}\mathbb{E}[u^2(t, x)]dx &+& (1 - \rho^2)\int_0^t\int_0^{+\infty}\mathbb{E}[u_x^2(s, x)]dxds \nonumber \\
&-& r\int_0^t\int_0^{1}\mathbb{E}[u^2(s, x)]dxds + \int_0^t\mathbb{E}[u^2(s, 1)]ds = 0
\end{eqnarray}
instead of \eqref{energy}, which is sufficient to deduce that $u = 0$ only when $r \leq 0$. Thus, if $r > 0$, we need to work under a probability measure $\mathbb{Q}$ which is obtained by Girsanov's theorem, under which $W_t^0$ becomes a Brownian motion with arbitrary drift $\tilde{r}t$ for $t \in [0, T]$, and thus we get the same SPDE with $r$ replaced with $r + \tilde{r}$ which can be made negative by choice of $\tilde{r}$. The weighted $L^2$ norm of $u$ being zero implies that $\mathbb{Q}$ - almost surely we have $u(t, x) = 0$ for all $t \in [0, T]$ and $x \geq 0$, and since $\mathbb{Q}$ is equivalent to the original probability measure $\mathbb{P}$ we obtain pathwise uniqueness of solutions.

We now let $u$ be a solution to \eqref{finalspde} with $U_0 = 0$. To obtain the counterpart of \eqref{energy2} we take $\epsilon \to 0^{+}$ in a smoothed version of it, which is precisely the identity of Lemma~\ref{mainid} without the term on the second line, and we use the regularity established by Theorem~\ref{thm:5.2}, i.e the existence of $\frac{\partial}{\partial v}J_{u} = \frac{\partial}{\partial v}q\left(Q^{-1}(v)\right)u\left(s,\,x,\,Q^{-1}(v)\right)$ in $L^{2}\left(\left[0, \, t \right] \times \Omega^0;\,\tilde{L}_{0,w}^{2}\right)$ with the convergence $\frac{\partial}{\partial v}I_{\epsilon,1} = \frac{\partial}{\partial v}J_{u,\epsilon} \to \frac{\partial}{\partial v}J_u$ as $\epsilon \to 0^{+}$ in that space. To pass to the limit we also use Lemma~\ref{lem:5.3} and the continuity of $L^2$ inner products. Then, we can check that the counterpart of \eqref{energy2} can be written as:
\begin{eqnarray}\label{limid}
& & \int_{\mathbb{R}^{+}}\int_{\mathbb{R}} w^2(x)\mathbb{E}\left[q^2\left(Q^{-1}(v)\right)u^2\left(t,\,x,\,Q^{-1}(v)\right)\right]dxdv \nonumber \\
& & \qquad\qquad\qquad\qquad = N_1(t) + N_2(t) + N_3(t) + N_4(t) + N_5(t) + N_6(t), 
\end{eqnarray}
where: 
\begin{eqnarray}
&& N_1(t) = r\int_{0}^{t}\int_{0}^{1}\int_{\mathbb{R}}\mathbb{E}\left[q^2\left(Q^{-1}(v)\right)u^2\left(t,\,x,\,Q^{-1}(v)\right)\right]dxdvds,
\end{eqnarray}
is the $\epsilon \to 0$ limit of 
\begin{eqnarray*}
r\int_{0}^{t}\left\Vert \mathbb{I}_{\left[0, \, 1\right] \times \mathbb{R}}(\cdot)I_{\epsilon,1}(s,\cdot)\right\Vert _{L^2\left(\Omega^0 ; \, \tilde{L}_{0}^2\right)}^2ds
\end{eqnarray*}
from the identity of Lemma~\ref{mainid}, and $-N_1(t)$ is the counterpart of $r\int_0^t\int_0^{1}\mathbb{E}[u^2(s, x)]dxds$ from \eqref{energy2};
\begin{eqnarray}
&& N_2(t) = \int_{0}^{t} \int_{\mathbb{R}^{+}}\int_{\mathbb{R}} w^2(x)\mathbb{E}\left[\left(h^2\left(Q^{-1}(v)\right)q\left(Q^{-1}(v)\right)u\left(s,\,x,\,Q^{-1}(v)\right)\right)_{x}\right. \nonumber \\
& & \qquad \qquad \qquad \qquad \qquad \qquad \qquad \times \left.q\left(Q^{-1}(v)\right)u\left(s,\,x,\,Q^{-1}(v)\right)\right]dxdvds, \nonumber \\
\end{eqnarray}
and
\begin{eqnarray}\label{N3}
&& N_3(t) = 2k\theta\int_{0}^{t}\int_{\mathbb{R}^{+}}\int_{\mathbb{R}} w^2(x)\mathbb{E}\left[Q'\left(Q^{-1}(v)\right)q\left(Q^{-1}(v)\right)u\left(s,\,x,\,Q^{-1}(v)\right)\right. \nonumber \\
& & \qquad \qquad \quad \qquad \qquad \qquad \qquad \qquad \times\left.\left(q\left(Q^{-1}(v)\right)u\left(s,\,x,\,Q^{-1}(v)\right)\right)_{v}\right]dxdvds \nonumber \\
& & \qquad \qquad 
-2k\int_{0}^{t}\int_{\mathbb{R}^{+}}\int_{\mathbb{R}} w^2(x)\mathbb{E}\left[vQ'\left(Q^{-1}(v)\right)q\left(Q^{-1}(v)\right)u\left(s,\,x,\,Q^{-1}(v)\right)\right. \nonumber \\
& & \qquad \qquad \qquad \qquad \qquad \qquad \qquad \qquad  \times \left.\left(q\left(Q^{-1}(v)\right)u\left(s,\,x,\,Q^{-1}(v)\right)\right)_{v}\right]dxdvds, \nonumber \\
\end{eqnarray}
are the $\epsilon \to 0$ limits of
\begin{eqnarray*}
\int_{0}^{t}\left\langle \frac{\partial}{\partial x}I_{\epsilon,h^{2}(z)}(s,\,\cdot),\,I_{\epsilon,1}(s,\,\cdot)\right\rangle _{L^{2}\left(\Omega^0;\,\tilde{L}_{0, w}^2\right)}ds
\end{eqnarray*}
and
\begin{eqnarray*}
& & 2k\theta\int_{0}^{t}\left\langle I_{\epsilon, Q'(z)}(s,\,\cdot),\,\frac{\partial}{\partial y}I_{\epsilon,1}(s,\,\cdot)\right\rangle _{L^{2}\left(\Omega^0;\,\tilde{L}_{0, w}^2\right)}ds \nonumber \\
& & \qquad \qquad -2k\int_{0}^{t}\left\langle I_{\epsilon,zQ'(z)}(s,\,\cdot),\,\frac{\partial}{\partial y}I_{\epsilon,1}(s,\,\cdot)\right\rangle _{L^{2}\left(\Omega^0;\,\tilde{L}_{0, w}^2\right)}ds \nonumber \\
\end{eqnarray*}
respectively from the identity of Lemma~\ref{mainid}, which do not have a counterpart in \eqref{energy2} as they occur from extra first order derivative terms in the SPDE \eqref{finalspde}, and specifically terms like $r\int_0^tu_{x}(s, x)ds$ in \eqref{simple} but with $y$-dependent coefficients and differentiation in either $x$ or $y$;
\begin{eqnarray}
& & N_4(t) = -\int_{0}^{t}\int_{\mathbb{R}^{+}}\int_{\mathbb{R}} w^2(x)\mathbb{E}\left[\left(h^2\left(Q^{-1}(v)\right)q\left(Q^{-1}(v)\right)u\left(s,\,x,\,Q^{-1}(v)\right)\right)_{x}\right. \nonumber \\ 
& & \qquad \qquad \qquad \qquad \quad \qquad  \qquad \qquad \times \left.\left(q\left(Q^{-1}(v)\right)u\left(s,\,x,\,Q^{-1}(v)\right)\right)_{x}\right]dxdvds \nonumber \\
& &\qquad \qquad -\xi^{2}\int_{0}^{t}\int_{\mathbb{R}^{+}}\int_{\mathbb{R}} w^2(x) \mathbb{E}\left[\left(\left(q\left(Q^{-1}(v)\right)u\left(s,\,x,\,Q^{-1}(v)\right)\right)_{v}\right)^2\right]dxdvds \nonumber \\
& & \qquad \qquad -\xi^{2}\int_{0}^{t}\int_{\mathbb{R}^{+}}\int_{\mathbb{R}} w^2(x)\mathbb{E}\left[q'\left(Q^{-1}(v)\right)q\left(Q^{-1}(v)\right)u\left(s,\,x,\,Q^{-1}(v)\right)\right. \nonumber \\
& & \qquad \qquad \qquad \qquad \qquad\qquad\qquad\qquad \times \left.\left(q\left(Q^{-1}(v)\right)u\left(s,\,x,\,Q^{-1}(v)\right)\right)_{v}\right]dxdvds \nonumber \\
& & \qquad \qquad - 2\rho\int_{0}^{t}\int_{\mathbb{R}^{+}}\int_{\mathbb{R}} w^2(x) \mathbb{E}\left[\left(h\left(Q^{-1}(v)\right)q\left(Q^{-1}(v)\right)u\left(s,\,x,\,Q^{-1}(v)\right)\right)_{x}\right. \nonumber \\
& & \qquad \qquad\qquad \qquad \qquad\qquad \qquad \qquad \times \left.\left(q\left(Q^{-1}(v)\right)u\left(s,\,x,\,Q^{-1}(v)\right)\right)_{v}\right]dxdvds \nonumber \\
\end{eqnarray}
is the $\epsilon \to 0$ limit of 
\begin{eqnarray*}
& &-\int_{0}^{t}\left\langle \frac{\partial}{\partial x}I_{\epsilon,h^{2}(z)}(s,\,\cdot),\,\frac{\partial}{\partial x}I_{\epsilon,1}(s,\,\cdot)\right\rangle _{L^{2}\left(\Omega^0;\,\tilde{L}_{0, w}^2\right)}ds \nonumber \\
& & \qquad\qquad -\xi^{2}\int_{0}^{t}\left\Vert \frac{\partial}{\partial y}I_{\epsilon,1}(s,\,\cdot)\right\Vert _{L^{2}\left(\Omega^0;\,\tilde{L}_{0, w}^2\right)}^{2}ds \nonumber \\
& & \qquad\qquad -\xi^{2}\int_{0}^{t}\left\langle I_{\epsilon,q'(z)}(s,\,\cdot),\,\frac{\partial}{\partial y}I_{\epsilon,1}(s,\,\cdot)\right\rangle _{L^{2}\left(\Omega^0;\,\tilde{L}_{0, w}^2\right)}ds \nonumber \\
& & \qquad\qquad - 2\rho\int_{0}^{t}\left\langle \frac{\partial}{\partial x}I_{\epsilon,h\left(z\right)}(s,\cdot),\,\frac{\partial}{\partial y}I_{\epsilon,1}(s,\cdot)\right\rangle _{L^2\left(\Omega^0; \, \tilde{L}_{0, w}^2\right)}ds
\end{eqnarray*}
which is the part of the identity of Lemma~\ref{mainid} that occurs by applying integration by parts on terms that involve second order derivatives, so $-N_4(t)$ is the counterpart of $\int_0^t\int_0^{+\infty}\mathbb{E}[u_x^2(s, x)]dxds$ from \eqref{energy2}, and its third component contains only one derivative of first order as it arises from the presence of a $y$-dependent coefficient when integration by parts is applied;
\begin{eqnarray}
& & N_5(t) = \rho_{1}^{2}\int_{0}^{t}\int_{\mathbb{R}^{+}}\int_{\mathbb{R}} w^2(x) \mathbb{E}\big[\big(h^2\left(Q^{-1}(v)\right) \nonumber \\
& & \qquad\qquad\qquad\qquad \qquad \qquad \qquad \quad \times \big(q\left(Q^{-1}(v)\right)u\left(s,\,x,\,Q^{-1}(v)\right)\big)_{x}\big)^2\big]dxdvds \nonumber \\
& & \qquad \quad + \xi^{2}\rho_{2}^{2}\int_{0}^{t}\int_{\mathbb{R}^{+}}\int_{\mathbb{R}} w^2(x) \mathbb{E}\left[\left(\left(q\left(Q^{-1}(v)\right)u\left(s,\,x,\,Q^{-1}(v)\right)\right)_{v}\right)^2\right]dxdvds \nonumber \\
& & \quad \qquad + 2 \xi\rho_{3}\rho_{1}\rho_{2} \int_{0}^{t}\int_{\mathbb{R}^{+}}\int_{\mathbb{R}} w^2(x) \mathbb{E}\left[\left(h\left(Q^{-1}(v)\right)q\left(Q^{-1}(v)\right)u\left(s,\,x,\,Q^{-1}(v)\right)\right)_{x}\right. \nonumber \\
& & \quad \qquad\qquad \qquad\qquad \qquad\qquad \qquad \qquad\quad \times \left.\left(q\left(Q^{-1}(v)\right)u\left(s,\,x,\,Q^{-1}(v)\right)\right)_{v}\right]dxdvds \nonumber \\
\end{eqnarray}
is the $\epsilon \to 0$ limit of 
\begin{eqnarray*}
& &\rho_{1}^{2}\int_{0}^{t}\left\Vert \frac{\partial}{\partial x}I_{\epsilon,h(z)}(s,\,\cdot)\right\Vert _{L^{2}\left(\Omega^0;\,\tilde{L}_{0, w}^2\right)}^{2}ds \nonumber \\
& & \qquad\qquad +\xi^{2}\rho_2^2\int_{0}^{t}\left\Vert \frac{\partial}{\partial y}I_{\epsilon,1}(s,\,\cdot)\right\Vert _{L^{2}\left(\Omega^0;\,\tilde{L}_{0, w}^2\right)}^{2}ds \nonumber \\
& & \qquad\qquad + 2\xi\rho_3\rho_1
\rho_2\int_{0}^{t}\left\langle \frac{\partial}{\partial x}I_{\epsilon,h\left(z\right)}(s,\cdot),\,\frac{\partial}{\partial y}I_{\epsilon,1}(s,\cdot)\right\rangle _{L^2\left(\Omega^0; \, \tilde{L}_{0, w}^2\right)}ds
\end{eqnarray*}
which is the part of the identity of Lemma~\ref{mainid} that occurs from second order differentials when we apply Ito's formula, so $-N_5(t)$ is the counterpart of $\rho^2\int_0^t\int_0^{+\infty}\mathbb{E}[u_x^2(s, x)]dxds$ from \eqref{energy2}; Finally, 
\begin{eqnarray}\label{Ν6}
N_6(t) &=& -\int_{0}^{t}\int_{0}^{1}\int_{\mathbb{R}} \mathbb{E}\left[\left(h^2\left(Q^{-1}(v)\right)q\left(Q^{-1}(v)\right)u\left(s,\,x,\,Q^{-1}(v)\right)\right)_{x}\right. \nonumber \\
& & \qquad \qquad \qquad \qquad \quad \times \left.q\left(Q^{-1}(v)\right)u\left(s,\,x,\,Q^{-1}(v)\right)\right]dxdvds \nonumber \\
&=& -\frac{1}{2}\int_{0}^{t}\int_{\mathbb{R}} \mathbb{E}\left[(h^2\left(Q^{-1}(v)\right)q^2\left(Q^{-1}(v)\right)u^2\left(s,\,1,\,Q^{-1}(v)\right)\right]dvds \nonumber \\
&\leq& 0
\end{eqnarray}
is the $\epsilon \to 0$ limit of 
\begin{eqnarray*}
-\int_{0}^{t}\left\langle \frac{\partial}{\partial x}I_{\epsilon,h^{2}(z)}(s,\,\cdot),\,\mathbb{I}_{\left[0, \, 1\right] \times \mathbb{R}}(\cdot)I_{\epsilon,1}(s,\,\cdot)\right\rangle _{L^{2}\left(\Omega^0;\,\tilde{L}_{0}^2\right)}ds
\end{eqnarray*}
from the identity of Lemma~\ref{mainid}, which occurs due to the presence of the weight function $w^2(x)$ when we integrate by parts in the term that contains $u_{xx}(s,x,y)$, and $-N_6(t)$ is clearly the counterpart of $\int_0^t\mathbb{E}[u^2(s, 1)]ds$ from \eqref{energy2}.

Next we want control the quantities $N_i(t)$ for $i = 1,2,3,4,5,6$, and we start by using \eqref{corrcondition} and the standard inequality $2ab \leq a^2 + b^2$ to bound
\begin{eqnarray*}
& &2 \left(\xi\rho_{3}\rho_{1}\rho_{2} - \rho\right) \int_{0}^{t}\int_{\mathbb{R}^{+}}\int_{\mathbb{R}} w^2(x) \mathbb{E}\left[\left(h\left(Q^{-1}(v)\right)q\left(Q^{-1}(v)\right)u\left(s,\,x,\,Q^{-1}(v)\right)\right)_{x}\right. \\
& & \quad \qquad\qquad \qquad \quad\qquad\qquad \qquad \qquad\quad \times \left.\left(q\left(Q^{-1}(v)\right)u\left(s,\,x,\,Q^{-1}(v)\right)\right)_{v}\right]dxdvds \\
& & \qquad \leq \int_{0}^{t}\int_{\mathbb{R}^{+}}\int_{\mathbb{R}} w^2(x) \mathbb{E}\left[2\sqrt{1 - \rho^{2}_{1}}\left(h\left(Q^{-1}(v)\right)q\left(Q^{-1}(v)\right)u\left(s,\,x,\,Q^{-1}(v)\right)\right)_{x}\right. \\
& & \qquad \qquad\qquad \qquad \quad\qquad\qquad \times \left.\xi\sqrt{1 - \rho^{2}_{2}}\left(q\left(Q^{-1}(v)\right)u\left(s,\,x,\,Q^{-1}(v)\right)\right)_{v}\right]dxdvds \\
& & \qquad \leq \left(1 - \rho_{1}^{2}\right)\int_{0}^{t}\int_{\mathbb{R}^{+}}\int_{\mathbb{R}} w^2(x) \mathbb{E}\big[\big(h^2\left(Q^{-1}(v)\right) \nonumber \\
& & \qquad\qquad\qquad\qquad \qquad \qquad \qquad \qquad \times \big(q\left(Q^{-1}(v)\right)u\left(s,\,x,\,Q^{-1}(v)\right)\big)_{x}\big)^2\big]dxdvds \nonumber \\
& & \qquad \qquad + \xi^{2}\left(1 - \rho_{2}^{2}\right)\int_{0}^{t}\int_{\mathbb{R}^{+}}\int_{\mathbb{R}} w^2(x) \mathbb{E}\left[\left(\left(q\left(Q^{-1}(v)\right)u\left(s,\,x,\,Q^{-1}(v)\right)\right)_{v}\right)^2\right]dxdvds
\end{eqnarray*}
which gives
\begin{eqnarray}\label{N4N5}
& &N_4(t) + N_5(t) \leq -\xi^{2}\int_{0}^{t}\int_{\mathbb{R}^{+}}\int_{\mathbb{R}} w^2(x)\mathbb{E}\left[q'\left(Q^{-1}(v)\right)q\left(Q^{-1}(v)\right)u\left(s,\,x,\,Q^{-1}(v)\right)\right. \nonumber \\
& & \qquad \qquad \qquad \qquad \qquad\qquad\qquad\qquad \qquad \times \left.\left(q\left(Q^{-1}(v)\right)u\left(s,\,x,\,Q^{-1}(v)\right)\right)_{v}\right]dxdvds \nonumber \\
\end{eqnarray}
Then, to deal with $N_1(t)$, we treat it as its counterpart in \eqref{energy2}, so we fix a $T > 0$ and work for $t \in \left[0, \, T\right]$ and we use Girsanov's theorem to add a drift $\tilde{r}$ to the Brownian motion $W^{0}$. This adds the term $-\int_0^t\tilde{r}\rho_1h(y)(u(s,x,y))_xds$ to the SPDE~\eqref{eq:5.1}, which can be absorbed by the term $\frac{1}{2}\int_{0}^{t}h^{2}(y)\left(u(s,x,y)\right)_{x}ds$, affecting only $N_2(t)$ which is replaced with
\begin{eqnarray}
& & \int_{0}^{t} \int_{\mathbb{R}^{+}}\int_{\mathbb{R}} w^2(x)\mathbb{E}\left[\left(\left(h^2\left(Q^{-1}(v)\right) - 2\tilde{r}\rho_1h\left(Q^{-1}(v)\right)\right)q\left(Q^{-1}(v)\right)u\left(s,\,x,\,Q^{-1}(v)\right)\right)_{x}\right. \nonumber \\
& & \qquad \qquad  \qquad  \qquad  \qquad \qquad\qquad\qquad\qquad\quad \times \left.q\left(Q^{-1}(v)\right)u\left(s,\,x,\,Q^{-1}(v)\right)\right]dxdvds \nonumber \\
& & \quad = \mathbb{E}\left[\int_{0}^{t} \int_{\mathbb{R}^{+}}\int_{\mathbb{R}} w^2(x)\frac{h^2\left(Q^{-1}(v)\right)}{2}\left(q^2\left(Q^{-1}(v)\right)u^2\left(s,\,x,\,Q^{-1}(v)\right)\right)_{x}dxdvds\right] \nonumber \\
& & \quad \qquad - \mathbb{E}\left[\int_{0}^{t} \int_{\mathbb{R}^{+}}\int_{\mathbb{R}} w^2(x)\tilde{r}\rho_1h\left(Q^{-1}(v)\right)\left(q^2\left(Q^{-1}(v)\right)u^2\left(s,\,x,\,Q^{-1}(v)\right)\right)_{x}dxdvds\right] \nonumber \\
& & \quad = -\mathbb{E}\left[\int_{0}^{t} \int_{0}^{1}\int_{\mathbb{R}} \frac{h^2\left(Q^{-1}(v)\right)}{2}q^2\left(Q^{-1}(v)\right)u^2\left(s,\,x,\,Q^{-1}(v)\right)dxdvds\right] \nonumber \\
& & \quad \qquad +\mathbb{E}\left[\int_{0}^{t} \int_{0}^{1}\int_{\mathbb{R}} \tilde{r}\rho_1h\left(Q^{-1}(v)\right)q^2\left(Q^{-1}(v)\right)u^2\left(s,\,x,\,Q^{-1}(v)\right)dxdvds\right] \nonumber \\
& & \quad \leq -r\mathbb{E}\left[\int_{0}^{t} \int_{0}^{1}\int_{\mathbb{R}}q^2\left(Q^{-1}(v)\right)u^2\left(s,\,x,\,Q^{-1}(v)\right)dxdvds\right] \nonumber \\
& & \quad = -N_1(t)
\end{eqnarray}
where the last inequality follows by picking $\tilde{r}$ such that $\tilde{r}\rho_1 \leq -\frac{r}{m}$ for a lower bound $m$ of $h$. Thus we obtain
\begin{eqnarray}\label{N1N2}
N_1(t) + N_2(t) \leq 0.
\end{eqnarray}
Finally, writing 
\begin{eqnarray*}
&&q\left(Q^{-1}(v)\right)u\left(s,\,x,\,Q^{-1}(v)\right)\left(q\left(Q^{-1}(v)\right)u\left(s,\,x,\,Q^{-1}(v)\right)\right)_v \\
&& \qquad \qquad \qquad \qquad \qquad \qquad \qquad \qquad \qquad = \frac{1}{2}\left(q^2\left(Q^{-1}(v)\right)u^2\left(s,\,x,\,Q^{-1}(v)\right)\right)_v
\end{eqnarray*}
and using integration by parts in both \eqref{N3} and the right hand side of \eqref{N4N5}, we can bound
\begin{eqnarray}\label{N3N4N5}
& & N_3(t) + N_4(t) + N_5(t) \nonumber \\
& & \qquad \qquad \qquad \leq K\int_{0}^{t}\int_{\mathbb{R}^{+}}\int_{\mathbb{R}} w^2(x)\mathbb{E}\left[q^2\left(Q^{-1}(v)\right)u^2\left(s,\,x,\,Q^{-1}(v)\right)\right]dxdvds \nonumber \\
\end{eqnarray}
for some $K$ that depends only on the upper bounds of the derivatives of $Q'\left(Q^{-1}(v)\right)$, $q'\left(Q^{-1}(v)\right)$ and $vQ'\left(Q^{-1}(v)\right)$ (which are all bounded when $q$ satisfies the conditions of Assumption~\ref{ass1}), so plugging \eqref{N1N2}, \eqref{N3N4N5} and \eqref{Ν6} in \eqref{limid} we obtain:
\begin{eqnarray}
& & \int_{\mathbb{R}^{+}}\int_{\mathbb{R}} w^2(x)\mathbb{E}\left[q^2\left(Q^{-1}(v)\right)u^2\left(t,\,x,\,Q^{-1}(v)\right)\right]dxdv \nonumber \\
& & \qquad \qquad \leq K\int_{0}^{t}\int_{\mathbb{R}^{+}}\int_{\mathbb{R}} w^2(x)\mathbb{E}\left[q^2\left(Q^{-1}(v)\right)u^2\left(s,\,x,\,Q^{-1}(v)\right)\right]dxdvds, \nonumber 
\end{eqnarray}
for all $t \in [0, T]$. By Gronwall's inequality, the above can only hold when the always nonnegative quantity $\int_{\mathbb{R}^{+}}\int_{\mathbb{R}} w^2(x)\mathbb{E}\left[q^2\left(Q^{-1}(v)\right)u^2\left(t,\,x,\,Q^{-1}(v)\right)\right]dxdv$ equals $0$ for all $t \leq T$. However, we have used Girsanov's theorem to switch to a probability measure $\mathbb{Q}$ that gives a convenient drift to the Brownian motion $W^0$. Therefore, if we denote by $\mathbb{E}^{\mathbb{Q}}$ the expectation under $\mathbb{Q}$, our last conclusion gives: 
\begin{eqnarray}
&&\mathbb{E}^{\mathbb{Q}}\left[\int_0^T\int_{\mathbb{R}^{+}}\int_{\mathbb{R}} w^2(x)q^2\left(Q^{-1}(v)\right)u^2\left(t,\,x,\,Q^{-1}(v)\right)dxdvdt\right] = 0 \nonumber \\
&& \qquad\qquad \Rightarrow \mathbb{Q}\left(\int_0^T\int_{\mathbb{R}^{+}}\int_{\mathbb{R}} w^2(x)q^2\left(Q^{-1}(v)\right)u^2\left(t,\,x,\,Q^{-1}(v)\right)dxdvdt = 0\right) = 1 \nonumber \\
&& \qquad\qquad \Rightarrow \mathbb{P}\left(\int_0^T\int_{\mathbb{R}^{+}}\int_{\mathbb{R}} w^2(x)q^2\left(Q^{-1}(v)\right)u^2\left(t,\,x,\,Q^{-1}(v)\right)dxdvdt = 0\right) = 1 \nonumber \\ \label{finaluniqueness}
\end{eqnarray}
In the above display, the second equality follows from the first because $$\int_0^T\int_{\mathbb{R}^{+}}\int_{\mathbb{R}} w^2(x)q^2\left(Q^{-1}(v)\right)u^2\left(t,\,x,\,Q^{-1}(v)\right)dxdv$$ is always non-negative, while the third equality follows from the second because the probability measure $\mathbb{Q}$ given by Girsanov's theorem is equivalent to the original probability measure $\mathbb{P}$. Since $u$ is the difference of two arbitrary $\alpha$-solutions to our problem with the same parameters and the same initial data, \eqref{finaluniqueness} yields pathwise uniqueness of solutions in the space $L^{2}\left(\left[0,\,T\right];\,\tilde{L}^2_{0,w}\right)$. We have thus proven the following result:
\begin{thm}
Fix the function $h$, the real number $\rho$ and the initial data function $U_{0}$, and suppose that $q$ satisfies the conditions of Assumption~\ref{ass1}, $h$ is as in Theorem~\ref{thm:4.2}, and that we also have $\left|\rho - \xi\rho_{3}\rho_{1}\rho_{2}\right| \leq \xi\sqrt{1 - \rho^{2}_{1}}\sqrt{1 - \rho^{2}_{2}}$. Then, there exists a unique $u$ which is an $\alpha$-solution to our problem for all $\alpha \geq 0$, where we have pathwise uniqueness in the space $L^{2}\left(\left[0,\,T\right];\,\tilde{L}^2_{0,w}\right)$. For this $u$, the weak derivative $u_{y}$ exists and we have
\[ 
u_{y}\in L^{2}\left(\left[0,\,T\right]\times\Omega^0;\,\tilde{L}^2_{0,w}\right)
\]
\end{thm}

\begin{rem}
The best possible result would be to have our solution in the even smaller function space $L^2\left(\left[0, \, T\right]\times \Omega^0 ; H_{0}^1\left(\mathbb{R}^{+}\times\mathbb{R}\right)\right)$ (without the weight function $w^2(\cdot)$), which would be the case if we had an existence result in that space. However, standard Theorems that could give such an existence result are not applicable to our SPDE due to the unboundedness of the term $k(\theta - y)$ (see \cite{KR81}). This is also the reason we cannot deduce higher regularity immediately, as we have mentioned in Remark~\ref{2ndorderrem}. A possible solution to these problems could be to introduce mean-reverting volatility processes $\sigma_t^{i}$ having a bounded drift, i.e $q_2(\theta - \sigma_t^{i})$ for all $i \in \mathbb{N}$, for some increasing and bounded function $q_2$ vanishing at zero.
\end{rem}

\vspace*{.05in}

{\flushleft{\textbf{Acknowledgement}}\\[.1in]
The second author's work was supported financially by the United Kingdom Engineering and Physical Sciences Research Council {[}EP/L015811/1{]}}, and by the Foundation for Education and European Culture (founded by Nicos \& Lydia Tricha).

{\flushleft{\textbf{Data availability statement}}\\[.1in]
There is no data associated with this article}

\appendix

\section{Proofs of certain important results}

\subsection{Proof of Theorem~\ref{thm:2.6}}\label{pf2.6}
Since $f\left(0,\,y\right)=0$ for all $y$, Ito's formula for the stopped two-dimensional stochastic process $\left\{ \left(X_{t}^{1},\,\sigma_{t}^{1}\right):\,t\geq0\right\}$ implies that

\begin{eqnarray*}
& & f\left(X_{t\wedge T_{1}}^{1},\,\sigma_{t}^{1}\right) \\
& & \qquad = f\left(x^{1},\,\sigma^{1}\right) \\
& & \qquad\qquad +\int_{0}^{t}\left[f_{x}\left(X_{s}^{1},\,\sigma_{s}^{1}\right)\left(r-\frac{h^{2}\left(\sigma_{s}^{1}\right)}{2}\right)+kf_{y}\left(X_{s}^{1},\,\sigma_{s}^{1}\right)\left(\theta-\sigma_{s}^{1}\right)\right]\mathbb{I}_{\{T_{1}>s\}}ds \\
& &  \qquad\qquad+\frac{1}{2}\int_{0}^{t}\left[f_{xx}\left(X_{s}^{1},\,\sigma_{s}^{1}\right)h^{2}\left(\sigma_{s}^{1}\right)+\xi^{2}f_{yy}\left(X_{s}^{1},\,\sigma_{s}^{1}\right)q^2\left(\sigma_{s}^{1}\right)\right]\mathbb{I}_{\{T_{1}>s\}}ds \\
& &  \qquad\qquad+\xi\rho_{3}\rho_{1}\rho_{2}\int_{0}^{t}f_{xy}\left(X_{s}^{1},\,\sigma_{s}^{1}\right)h\left(\sigma_{s}^{1}\right)q\left(\sigma_{s}^{1}\right)\mathbb{I}_{\{T_{1}>s\}}ds \\
& &  \qquad\qquad+\int_{0}^{t}f_{x}\left(X_{s}^{1},\,\sigma_{s}^{1}\right)\mathbb{I}_{\{T_{1}>s\}}h\left(\sigma_{s}^{1}\right)\rho_{1}dW_{s}^{0} \\
& & \qquad\qquad+\xi\int_{0}^{t}f_{y}\left(X_{s}^{1},\,\sigma_{s}^{1}\right)\mathbb{I}_{\{T_{1}>s\}}q\left(\sigma_{s}^{1}\right)\rho_{2}dB_{s}^{0}
\\
& &  \qquad\qquad+\int_{0}^{t}f_{x}\left(X_{s}^{1},\,\sigma_{s}^{1}\right)\mathbb{I}_{\{T_{1}>s\}}h(\sigma_{s}^{1})\sqrt{1-\rho_{1}^{2}}dW_{s}^{1}
\\
& &  \qquad\qquad+\xi\int_{0}^{t}f_{y}\left(X_{s}^{1},\,\sigma_{s}^{1}\right)\mathbb{I}_{\{T_{1}>s\}}q\left(\sigma_{s}^{1}\right)\sqrt{1-\rho_{2}^{2}}dB_{s}^{1}.
\end{eqnarray*}
Due to the independence between the market and the idiosyncratic Brownian motions, taking conditional expectations given $\left(W^{0},\,B^{0}\right)$ and $\mathcal{G}^0$ eliminates the Ito integrals with respect to $B^1$ and $W^1$, while the Ito integrals with respect to $B^{0}$ and $W^{0}$ can be interchanged with the the conditional expectations. Interchanging then the Riemannian integrals with the conditional expectations and recalling the linearity of the integrals along with the definitions of $A$ and the integral with respect to $v_{t}$, we can easily obtain the desired, completing the proof of Theorem~\ref{thm:2.6}.

\subsection{Proof of Theorem~\ref{thm:3.1}}\label{pf3.3}
First, we will need sufficient integrability of our volatility processes. For this reason, we will first prove the following lemma

\begin{lem}\label{lem:3.2}
Fix a $T > 0$. Then, under the assumptions of Theorem~\ref{thm:3.1}, for any $p\geq0$ we have
\[
\mathbb{E}\left[\sup_{0\leq t\leq T}\sigma_{t}^{p}\right]<\infty.
\]
\end{lem}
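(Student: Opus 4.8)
The plan is to exploit the boundedness of $q$ (Assumption~\ref{ass1}), which makes the diffusion coefficient $\xi q(\sigma_t)$ bounded and thereby reduces the estimate to controlling a stochastic integral whose quadratic variation is bounded \emph{deterministically}. First I would collapse the two driving noises into a single Brownian motion: since $B^0$ and $B^1$ are independent standard Brownian motions, the process $\tilde B_t := \sqrt{1-\rho_2^2}\,B_t^1 + \rho_2 B_t^0$ is, by Lévy's characterisation, a standard Brownian motion, and \eqref{eq:3.1} becomes $d\sigma_t = k(\theta - \sigma_t)\,dt + \xi q(\sigma_t)\,d\tilde B_t$. Since $\sigma$ may take negative values, I interpret $\sigma_t^p$ as $|\sigma_t|^p$, so that it suffices to bound $\mathbb{E}\bigl[\sup_{0\le t\le T}|\sigma_t|^p\bigr]$.

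Next I would handle the linear drift explicitly by variation of constants. Setting $Y_t = e^{kt}(\sigma_t - \theta)$ and applying Itô's formula gives $dY_t = \xi e^{kt} q(\sigma_t)\,d\tilde B_t$, hence
\[
\sigma_t = \theta + e^{-kt}(\sigma_0 - \theta) + \xi e^{-kt} N_t, \qquad N_t := \int_0^t e^{ks} q(\sigma_s)\,d\tilde B_s .
\]
The advantage of this representation is that $N$ is a genuine martingale, the $t$-dependence of the convolution kernel $e^{-k(t-s)}$ having been absorbed into the deterministic prefactor $e^{-kt}$, so that the supremum over $t$ is amenable to a maximal inequality. Using $e^{-kt}\le c_{k,T}$ for $0\le t\le T$ (with $c_{k,T}=1$ when $k\ge 0$) I obtain the pathwise bound
\[
\sup_{0\le t\le T}|\sigma_t| \le |\theta| + c_{k,T}\bigl(|\sigma_0 - \theta| + \xi\sup_{0\le t\le T}|N_t|\bigr).
\]

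The crucial estimate is on $N$: by Assumption~\ref{ass1} we have $q^2(\sigma_s)\le \|q\|_\infty^2$, so the quadratic variation obeys the deterministic bound $\langle N\rangle_T = \xi^2\int_0^T e^{2ks}q^2(\sigma_s)\,ds \le \xi^2\|q\|_\infty^2\int_0^T e^{2ks}\,ds =: K_T < \infty$. The Burkholder--Davis--Gundy inequality then yields, for every $p\ge 1$, $\mathbb{E}[\sup_{0\le t\le T}|N_t|^p]\le C_p\,\mathbb{E}[\langle N\rangle_T^{p/2}]\le C_p K_T^{p/2}<\infty$, while the range $0\le p<1$ follows from Jensen's inequality on the probability space. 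Combining this with the pathwise bound and raising to the $p$-th power via $(a+b+c)^p\le 3^{p}(a^p+b^p+c^p)$, I conclude
\[
\mathbb{E}\Bigl[\sup_{0\le t\le T}|\sigma_t|^p\Bigr]\le 3^p\Bigl(|\theta|^p + c_{k,T}^p\,\mathbb{E}|\sigma_0-\theta|^p + c_{k,T}^p\xi^p C_p K_T^{p/2}\Bigr),
\]
which is finite because $\sigma_0\in L^p$ for all $p$ by hypothesis.

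I do not expect a serious obstacle here: the boundedness of $q$ is precisely what trivialises the moment bounds, in sharp contrast with the CIR case $q(z)=\sqrt z$ of \cite{HK17}, where the unbounded diffusion coefficient forces a much more delicate analysis. The only point requiring any care is the $t$-dependence of the convolution kernel, which the martingale $N_t$ resolves cleanly. Should one prefer to avoid the explicit solution, a routine alternative is to apply Itô's formula to $(1+\sigma_t^2)^{p/2}$, bound the drift contribution by $C\int_0^t(1+|\sigma_s|^p)\,ds$ using the linear mean-reversion, control the martingale part by BDG with the same deterministic quadratic-variation bound, and close the estimate with Gronwall's inequality.
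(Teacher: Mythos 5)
Your proof is correct, but it takes a genuinely different route from the paper's. You solve the linear mean-reverting SDE by variation of constants, $\sigma_t=\theta+e^{-kt}(\sigma_0-\theta)+\xi e^{-kt}N_t$, and then lean entirely on the boundedness of $q$ from Assumption~\ref{ass1}: the martingale $N_t=\int_0^te^{ks}q(\sigma_s)\,d\tilde B_s$ has a \emph{deterministic} quadratic-variation bound, so a single application of BDG (valid in fact for all $p>0$, so your Jensen step for $p<1$ is optional) finishes the argument, directly for every real $p\ge 0$. The paper instead applies It\^o's formula to the monomial $\sigma_t^{2n}$, observes that the drift contribution is a polynomial in $\sigma_s$ of even degree with negative leading coefficient (the mean reversion kills the top-order term), deduces fixed-time moment bounds, handles odd moments via $2a\le a^2+1$, and only then controls the supremum through Cauchy--Schwarz and Doob's inequality. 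The essential difference is robustness versus economy: the paper's argument is written under the weaker growth hypothesis $q^2(z)\le C_q^2|z|^r$ with $r\in\{0,1\}$, so it covers CIR-type unbounded coefficients like $q(z)=\sqrt z$ and does not use boundedness of $q$ at all, at the cost of the polynomial bookkeeping and the reduction to $p=2n$; your argument is shorter and gives all $p\ge0$ at once, but would collapse the moment $q$ is unbounded, since the deterministic bound on $\langle N\rangle_T$ is exactly what trivialises the estimate. Two cosmetic slips: having factored $\xi$ out of $N$, the $\xi^2$ should not reappear inside $\langle N\rangle_T$ (you then multiply by $\xi^p$ again in the final bound, double-counting a harmless constant), and your closing Gronwall alternative is in spirit the paper's actual proof.
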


\begin{proof}[\textbf{Proof of Lemma A.1}]
Observe that in all cases, the function $q$ satisfies the growth condition $q^2(z) \leq C_q^2|z|^{r}$ for some $r \in \{0, \, 1\}$ and some $C_q > 0$. It is enough to consider the case $p=2n$ with $n\in\mathbb{N}$. By Ito's formula we have
\begin{eqnarray}\label{itomonomial}
\sigma_{t}^{2n}&=& \sigma_{0}^{2n}+\int_{0}^{t}n\left(\sigma_{s}^{2n-1}2k\left(\theta-\sigma_{s}\right)+\xi^{2}\left(2n-1\right)\sigma_{s}^{2n-2}q^{2}\left(\sigma_{s}\right)\right)ds
\nonumber \\
&&\qquad +2n\xi\int_{0}^{t}\sigma_{s}^{2n-1}q\left(\sigma_{s}\right)d\left(\sqrt{1-\rho_{2}^{2}}B_{s}^{1}+\rho_{2}B_{s}^{0}\right) \nonumber
\\
&\leq& \sigma_{0}^{2n}+C_nT+2n\xi\int_{0}^{t}\sigma_{s}^{2n-1}q\left(\sigma_{s}\right)d\left(\sqrt{1-\rho_{2}^{2}}B_{s}^{1}+\rho_{2}B_{s}^{0}\right)
\end{eqnarray}
for some $C_n > 0$ and any $t\leq T$, since we can use the at most linear growth of $q^2$ to show that the quantity within the
Riemannian integral can be bounded from above by $P\left(|\sigma_s|\right)$, where $P$ is a polynomial of an even degree and a negative leading coefficient. Taking expectations on \eqref{itomonomial}, we find that for any $t \leq T$, the $2n$-th moment of $\sigma_{t}$ is bounded by $C_nT + \mathbb{E}\left[\sigma_{0}^{2n}\right]$. Moreover, by using this result and the inequality $2a \leq a^2 + 1$ for $a \geq 0$, we can prove that there is a uniform bound in $t \leq T$ for odd moments of $\sigma_{t}$ as well. Thus, if we set $M_{T}={\displaystyle \sup_{t\leq T}\sigma_{t}}$, taking the supremum over $t\leq T$ in \eqref{itomonomial}, then expectations and finally
using Cauchy-Schwartz and Doob's inequalities, we find that:
\begin{eqnarray}
\mathbb{E}\left[M_{T}^{2n}\right] &\leq & \mathbb{E}\left[\sigma_{0}^{2n}\right]+C_{n}T \nonumber \\
&& \qquad +2n\xi\mathbb{E}^{\frac{1}{2}}\left[\sup_{0\leq t\leq T}\left(\int_{0}^{t}\sigma_{s}^{2n-1}q\left(\sigma_{s}\right)d\left(\sqrt{1-\rho_{2}^{2}}B_{s}^{1}+\rho_{2}B_{s}^{0}\right)\right)^{2}\right] \nonumber \\
&\leq &\mathbb{E}\left[\sigma_{0}^{2n}\right]+C_{n}T+4n\xi\mathbb{E}^{\frac{1}{2}}\left[\int_{0}^{T}\sigma_{s}^{4n-2}q^2\left(\sigma_{s}\right) ds\right] \nonumber \\
&\leq &\mathbb{E}\left[\sigma_{0}^{2n}\right]+C_{n}T+4n\xi C_q\mathbb{E}^{\frac{1}{2}}\left[\int_{0}^{T}\sigma_{s}^{4n-2}\left|\sigma_{s}\right|^r ds\right] \nonumber \\
&=& \mathbb{E}\left[\sigma_{0}^{2n}\right]+C_{n}T+4n\xi C_q\sqrt{\int_{0}^{T}\mathbb{E}\left[\left|\sigma_{s}\right|^{4n-2 + r}\right]ds} \nonumber \\
&<& \infty \nonumber
\end{eqnarray}
and this completes the proof of the Lemma.
\end{proof}

Returning to the proof of our theorem, we need a few results from Malliavin Calculus. As in \cite{Nualart}, we denote by $\mathbb{D}^{n,p}(V)$ the space of random variables which are $n$ times Malliavin differentiable with respect to a Brownian motion defined in some interval $\left[0, \, T\right]$, which take values in the Banach space $V$, and whose $k$-th Malliavin derivative has an $L^2\left(\left[0, \, T\right]^k; V\right)$ norm which is a random variable in $L^p$ for all $0 \leq k \leq n$.

We will show that $\sigma_t$ possesses regular first and second order Malliavin derivatives with respect to the Brownian motion $B^1$, under the conditional probability measure $\mathbb{P}(\cdot\,|\,B^{0},\,\mathcal{G}^0)$, $\mathbb{P}^0$ - almost surely, which will allow us to obtain the desired result. 

We fix a $T > 0$ and we consider the strictly increasing function $Q(y) = \int_{0}^{y}\frac{1}{q(z)}dz$. Then, by using Ito's formula on \eqref{eq:3.1}, we find that $v_t = Q(\sigma_t)$ satisfies the SDE
\begin{eqnarray}\label{eq:3.1.2}
dv_{t} = V\left(v_t\right)dt + \xi\sqrt{1-\rho_{2}^{2}}dB_{t}^{1}+\xi \rho_{2}dB_{t}^{0}
\end{eqnarray}
for all $t \in \left[0, \, T\right]$, where
\begin{eqnarray}
V(x) = k\frac{(\theta-Q^{-1}\left(x\right))}{q\left(Q^{-1}\left(x\right)\right)} - \frac{\xi^2}{2}q'\left(Q^{-1}\left(x\right)\right)
\end{eqnarray} 
for any $x \in \mathbb{R}$. We will compute first the Malliavin derivative of $v_{t}$ with respect to the Brownian motion $B^1$, under the probability measure $\mathbb{P}(\cdot\,|\,B^{0},\,\mathcal{G}^0)$. Since we have $\left(Q^{-1}(x)\right)' = q\left(Q^{-1}(x)\right)$, we can easily compute
\begin{eqnarray}
V'(x) = q\left(Q^{-1}(x)\right) \left(k\frac{-q\left(Q^{-1}(x)\right) - \left(\theta - Q^{-1}(x)\right)q'\left(Q^{-1}(x)\right)}{q^2\left(Q^{-1}(x)\right)} - \frac{\xi^2}{2}q''\left(Q^{-1}(x)\right)\right) \nonumber \\
\end{eqnarray}
and
\begin{eqnarray}
V''(x) &=& q\left(Q^{-1}(x)\right)q'\left(Q^{-1}(x)\right) \nonumber \\
&& \qquad \qquad \times \left[k\frac{-q\left(Q^{-1}(x)\right) - \left(\theta - Q^{-1}(x)\right)q'\left(Q^{-1}(x)\right)}{q^2\left(Q^{-1}(x)\right)} - \frac{\xi^2}{2}q''\left(Q^{-1}(x)\right)\right] \nonumber \\
&& \qquad + 2kq'\left(Q^{-1}(x)\right) - k(\theta - Q^{-1}(x))\left[q''\left(Q^{-1}(x)\right) - 2\frac{\left(q'\left(Q^{-1}(x)\right)\right)^2}{q\left(Q^{-1}(x)\right)}\right] \nonumber \\
&& \qquad - \frac{\xi^2}{2}q^{(3)}\left(Q^{-1}(x)\right)\left(q\left(Q^{-1}(x)\right)\right)^2
\end{eqnarray}
which are both bounded by Assumption~\ref{ass1}. This allows us to recall Theorem 2.2.1 from page 102 in \cite{Nualart} and the Remark after its proof, which imply that the desired Malliavin derivative exists under $\mathbb{P}$ and it satisfies
\begin{eqnarray}\label{malb}
D_{t'}v_{t} &=& \xi \sqrt{1 - \rho_2^2} - \int_{t'}^{t}V'(v_s)D_{t'}v_{s}ds
\end{eqnarray}
for $t \in \left[t', \, T\right]$. By looking at the proof of that Theorem, we see that the underlying probability measure does not play any role as long as we are differentiating with respect to the path of a Brownian motion, which means that here we can have the same result under the probability measure $\mathbb{P}(\cdot\,|\,B^{0},\,\mathcal{G}^0)$ as well. Then, \eqref{malb} is a linear ODE in $t \in \left[t', \, T\right]$, which can be solved to give
\begin{eqnarray}
D_{t'}v_{t} &=& \xi \sqrt{1 - \rho_2^2}e^{-\int_{t'}^{t}V'(v_s)ds}
\end{eqnarray}
for any $0 \leq t' \leq t \leq T$, while we have $D_{t'}v_{t} = 0$ for $0 \leq t < t' \leq T$. By the boundedness of $V'$, the above Malliavin derivative is positive, and for any $t \in \left[0, \, T\right]$, it belongs to $L^{\infty}\left(\Omega^0 \times \Omega^1 \times \left[0, \, T\right]\right)$. Therefore, we can use the standard Malliavin chain rule (Proposition 1.2.2 on page 29 in \cite{Nualart}) to obtain
\begin{eqnarray}\label{maldev1}
D_{t'}\sigma_t &=& D_{t'}Q^{-1}(v_t) \nonumber \\
&=& q\left(Q^{-1}(v_t)\right)D_{t'}v_{t} \nonumber \\
&=& \xi \sqrt{1 - \rho_2^2}q\left(Q^{-1}(v_t)\right)e^{-\int_{t'}^{t}V'(v_s)ds} \nonumber \\
&=& \xi \sqrt{1 - \rho_2^2}q(\sigma_t)e^{-\int_{t'}^{t}V'(Q(\sigma_s))ds},
\end{eqnarray}
which belongs to $L^{\infty}\left(\Omega^0 \times \Omega^1 \times \left[0, \, T\right]\right)$ as well.

To compute the second order derivative of $\sigma_t$ under the conditional probability measure $\mathbb{P}(\cdot\,|\,B^{0},\,\mathcal{G}^0)$, we use again the standard Malliavin chain rule to obtain 
\begin{eqnarray}
D_{t''}V'(v_s) &=& V''(v_s)D_{t''}v_s \nonumber \\
&=& \xi \sqrt{1 - \rho_2^2}V''(v_s)e^{-\int_{t''}^{s}V'(v_{s'})ds'},
\end{eqnarray}
for any $s \in \left[t', \, t\right]$ with $0 \leq t' \leq t \leq T$ and $t'' \in \left[0, \, T\right]$. It is not hard to see that the absolute value of this derivative is bounded by some $b_T > 0$ depending only on $T$. Thus, for any $h_{\cdot} \in L^2\left(\Omega^0 \times \Omega^1 \times \left[0, \, T\right]\right)$, if we denote by $\delta$ the Skorokhod integral with respect to $B^1$, by Fubini's Theorem and duality we have
\begin{eqnarray}
\mathbb{E}\left[\int_{0}^{t}h_{t''}\int_{t'}^{t}D_{t''}V'(v_s)dsdt''\,|\,B^{0},\,\mathcal{G}^0\right] &=& \int_{t'}^{t}\mathbb{E}\left[\int_{0}^{t}D_{t''}V'(v_s)h_{t''}dt''\,|\,B^{0},\,\mathcal{G}^0\right]ds \nonumber \\
&=& \int_{t'}^{t}\mathbb{E}\left[V'(v_s)\delta(h_{\cdot})\,|\,B^{0},\,\mathcal{G}^0\right]ds
\nonumber \\
&=& \mathbb{E}\left[\delta(h_{\cdot})\int_{t'}^{t}V'(v_s)ds\,|\,B^{0},\,\mathcal{G}^0\right]. \nonumber 
\end{eqnarray}
Therefore, if we approximate $\int_{t'}^{t}V'(v_s)ds$ by square integrable random variables which are Malliavin differentiable, we can show that the Malliavin derivatives converge in $L^2$ to the integral $\int_{t'}^{t}D_{t''}V'(v_s)ds$, and Lemma 1.2.3 on page 30 in \cite{Nualart} implies then that $\int_{t'}^{t}V'(v_s)ds$ has a Malliavin derivative which equals $\int_{t'}^{t}D_{t''}V'(v_s)ds$. Combining the last two results we find that
\begin{eqnarray}
D_{t''}\int_{t'}^{t}V'(v_s)ds = \xi \sqrt{1 - \rho_2^2}\int_{t'}^{t}V''(v_s)e^{-\int_{t''}^{s}V'(v_{s'})ds'}ds
\end{eqnarray}
which has an absolute value bounded by $Tb_T$. Moreover, since $\int_{t'}^{t}V'(v_s)ds$ is bounded for any $t$ and $t'$, by \eqref{maldev1} we have that
\begin{eqnarray}
D_{t'}\sigma_t = \xi \sqrt{1 - \rho_2^2}q\left(Q^{-1}(v_t)\right)e^{-F_{t,t'}\left(\int_{t'}^{t}V'(v_s)ds\right)} \nonumber 
\end{eqnarray}
for a smooth and compactly supported function $F_{t,t'}$ which is the identity on a compact set containing all the possible values of $\int_{t'}^{t}V'(v_s)ds$. Hence, applying the Malliavin chain rule once more, we obtain
\begin{eqnarray}
D_{t', t''}\sigma_{t} &=& \xi \sqrt{1 - \rho_2^2}q'\left(Q^{-1}(v_t)\right)q\left(Q^{-1}(v_t)\right)e^{-\int_{t'}^{t}V'(v_s)ds}D_{t''}v_t \nonumber \\
&& \qquad + \xi \sqrt{1 - \rho_2^2}q\left(Q^{-1}(v_t)\right)e^{-\int_{t'}^{t}V'(v_s)ds}D_{t''}\int_{t'}^{t}V'(v_s)ds \nonumber \\
&=& \xi^2 \left(1 - \rho_2^2\right)q'\left(Q^{-1}(v_t)\right)q\left(Q^{-1}(v_t)\right)e^{-\int_{t'}^{t}V'(v_s)ds}e^{-\int_{t''}^{t}V'(v_s)ds} \nonumber \\
&& \qquad + \xi^2 \left(1 - \rho_2^2\right)q\left(Q^{-1}(v_t)\right)e^{-\int_{t'}^{t}V'(v_s)ds}\int_{t'}^{t}V''(v_s)e^{-\int_{t''}^{s}V'(v_{s'})ds'}ds \nonumber \\
&=& \xi^2 \left(1 - \rho_2^2\right)q'\left(\sigma_t\right)q\left(\sigma_t\right)e^{-\int_{t'}^{t}V'(Q(\sigma_s))ds}e^{-\int_{t''}^{t}V'(Q(\sigma_s))ds} \nonumber \\
&& \qquad + \xi^2 \left(1 - \rho_2^2\right)q\left(\sigma_t\right)e^{-\int_{t'}^{t}V'(Q(\sigma_s))ds}\int_{t'}^{t}V''(Q(\sigma_s))e^{-\int_{t''}^{s}V'(Q(\sigma_{s'}))ds'}ds. \nonumber \\
\end{eqnarray}
By our previous boundedness results, the last line also has an absolute value which is bounded by some $b_T' > 0$ depending only on $T$. 

By Lemma~\ref{lem:3.2} now, we have that $\sigma_{t}\in L^{p}\left(\Omega^0 \times \Omega^1\right)$ for any $p > 1$ and $t \leq T$, under the probability measure $\mathbb{P}$, which implies that $\sigma_{t}\in L^{p}\left(\Omega^1\right)$ holds under the conditional probability measure $\mathbb{P}\left(\cdot\,|\,B^{0},\,\mathcal{G}^0\right)$, $\mathbb{P}^0$- almost surely. Combining this with the above results, we deduce that $\sigma_{t}\in L^{p}\left(\Omega^1\right) \cap \mathbb{D}^{\infty,2}\left(\Omega^1\right)$ with respect to $B^1$ for all $t \leq T$, under the probability measure $\mathbb{P}\left(\cdot\,|\,B^{0},\,\mathcal{G}^0\right)$ ($\mathbb{P}^0$- almost surely). Moreover, it is not hard to check that for all $t' < t$, $D_{t'}\sigma_t$ is bounded between the positive quantities $b_T'' := \xi \sqrt{1 - \rho_2^2}m_qe^{-TM_{|V'|}}$ and $\tilde{b}_T := \xi \sqrt{1 - \rho_2^2}M_qe^{TM_{|V'|}}$, where we denote by $m_f$ the minimum of a function $f$ and by $M_f$ the maximum of that function.
This allows us to recall Lemma~E2.1 from \cite{ERR} for $\alpha = 0$ and deduce that the conditional probability measure $\mathbb{P}(\sigma_{t}\in \cdot\,|\,B^{0},\,\mathcal{G}^0)$ has a continuous density $p_{t}(\cdot\,|\,B^{0},\,\mathcal{G}^0)$ which satisfies
\begin{eqnarray}\label{3.1est}
&& \sup_{y \in \mathbb{R}}p_{t}(y\,|\,B^{0},\,\mathcal{G}^0) \nonumber \\
&& \qquad \leq \left(C+2\right)\mathbb{E}^{\frac{1}{pr}}\left[\left\Vert D_{\cdot, \cdot}^2\sigma_t \right\Vert_{L^2\left(\left[0, T\right]^2\right)}^{pr}\,|\,B^{0},\,\mathcal{G}^0\right] \nonumber \\
&& \qquad \qquad \qquad \times\mathbb{E}^{\frac{1}{pr'}}\left[\left|\frac{1}{\left\Vert D_{\cdot}\sigma_t \right\Vert_{L^2\left(\left[0, T\right]\right)}^2}\right|^{pr'}\,|\,B^{0},\,\mathcal{G}^0\right] \nonumber \\
&& \qquad \qquad + C\mathbb{E}^{\frac{1}{pr}}\left[\left\Vert D_{\cdot}\sigma_t \right\Vert_{L^2\left(\left[0, T\right]\right)}^{pr}\,|\,B^{0},\,\mathcal{G}^0\right]\times\mathbb{E}^{\frac{1}{pr'}}\left[\left|\frac{1}{\left\Vert D_{\cdot}\sigma_t \right\Vert_{L^2\left(\left[0, T\right]\right)}^2}\right|^{pr'} \,|\,B^{0},\,\mathcal{G}^0\right], \nonumber \\
&& \qquad = \left(C+2\right)\mathbb{E}^{\frac{1}{pr}}\left[\left\Vert D_{\cdot, \cdot}^2\sigma_t \right\Vert_{L^2\left(\left[0, t\right]^2\right)}^{pr}\,|\,B^{0},\,\mathcal{G}^0\right] \nonumber \\
&& \qquad \qquad \qquad \times\mathbb{E}^{\frac{1}{pr'}}\left[\left|\frac{1}{\left\Vert D_{\cdot}\sigma_t \right\Vert_{L^2\left(\left[0, t\right]\right)}^2}\right|^{pr'}\,|\,B^{0},\,\mathcal{G}^0\right] \nonumber \\
&& \qquad \qquad + C\mathbb{E}^{\frac{1}{pr}}\left[\left\Vert D_{\cdot}\sigma_t \right\Vert_{L^2\left(\left[0, t\right]\right)}^{pr}\,|\,B^{0},\,\mathcal{G}^0\right]\times\mathbb{E}^{\frac{1}{pr'}}\left[\left|\frac{1}{\left\Vert D_{\cdot}\sigma_t \right\Vert_{L^2\left(\left[0, t\right]\right)}^2}\right|^{pr'}\,|\,B^{0},\,\mathcal{G}^0\right], \nonumber \\
&& \qquad \leq \left(C+2\right)\mathbb{E}^{\frac{1}{pr}}\left[\left(\sqrt{t^2b_T'}\right)^{pr}\,|\,B^{0},\,\mathcal{G}^0\right]\times\mathbb{E}^{\frac{1}{pr'}}\left[\left(\frac{1}{tb_T''}\right)^{pr'}\,|\,B^{0},\,\mathcal{G}^0\right] \nonumber \\
&& \qquad \qquad + C\mathbb{E}^{\frac{1}{pr}}\left[\left(\sqrt{t\tilde{b}_T}\right)^{pr}\,|\,B^{0},\,\mathcal{G}^0\right]\times\mathbb{E}^{\frac{1}{pr'}}\left[\left(\frac{1}{tb_T''}\right)^{pr'}\,|\,B^{0},\,\mathcal{G}^0\right] \nonumber \\
&& \qquad = \left(C+2\right)\sqrt{t^2b_T'}\times\frac{1}{tb_T''} + C\sqrt{t\tilde{b}_T}\times\frac{1}{tb_T''}, \nonumber
\end{eqnarray}
where $C$ is a deterministic positive constant, $p, r$ can be anything bigger than $1$, and $\tilde{b}_T, b_T', b_T''$ are the deterministic bounds obtained earlier. The above now gives 
\begin{eqnarray*}
\esssup_{y \in \mathbb{R}, \, \omega^0 \in \Omega^0}p_{t}(y\,|\,B^{0},\,\mathcal{G}^0) \leq C' + C''\frac{1}{\sqrt{t}}
\end{eqnarray*}
for some deterministic constants $C'$ and $C''$, which can be integrated with respect to $t$ in $\left[0, \, T\right]$ to give
\begin{eqnarray*}
\int_{0}^{T}\esssup_{y \in \mathbb{R}, \, \omega^0 \in \Omega^0}p_{t}(y\,|\,B^{0},\,\mathcal{G}^0)dt \leq C'T + C''\sqrt{T}
\end{eqnarray*}
and this completes the proof of the first estimate. 

For the second estimate, we recall again Lemma~E2.1 from \cite{ERR} but for $a > 0$, and we use the same bounds as previously for the Malliavin derivatives to obtain:
\begin{eqnarray}
&& \sup_{y \in \mathbb{R}}|y|^{\alpha}p_{t}(y\,|\,B^{0},\,\mathcal{G}^0) \nonumber \\
&& \qquad \leq \left(C+2\right)\mathbb{E}^{\frac{1}{pr}}\left[\left(\sqrt{t^2b_T'}\right)^{pr}\,|\,B^{0},\,\mathcal{G}^0\right]\times\mathbb{E}^{\frac{1}{pr'}}\left[\left(\frac{1}{tb_T''}\right)^{pr'}|\sigma_t|^{\alpha}\,|\,B^{0},\,\mathcal{G}^0\right] \nonumber \\
&& \qquad \qquad + C\mathbb{E}^{\frac{1}{pr}}\left[\left(\sqrt{t\tilde{b}_T}\right)^{pr}\,|\,B^{0},\,\mathcal{G}^0\right]\times\mathbb{E}^{\frac{1}{pr'}}\left[\left(\frac{1}{tb_T''}\right)^{pr'}|\sigma_t|^{\alpha}\,|\,B^{0},\,\mathcal{G}^0\right]. \nonumber
\end{eqnarray}
Therefore, by Holder's inequality we have
\begin{eqnarray}
&& \mathbb{E}\left[\left(\sup_{y \in \mathbb{R}}|y|^{\alpha}p_{t}(y\,|\,B^{0},\,\mathcal{G}^0)\right)^p\right] \nonumber \\
&& \qquad \leq \left(C+2\right)\mathbb{E}^{\frac{1}{r}}\left[\left(\sqrt{t^2b_T'}\right)^{pr}\right]\times\mathbb{E}^{\frac{1}{r'}}\left[\left(\frac{1}{tb_T''}\right)^{pr'}|\sigma_t|^{\alpha}\right] \nonumber \\
&& \qquad \qquad + C\mathbb{E}^{\frac{1}{r}}\left[\left(\sqrt{t\tilde{b}_T}\right)^{pr}\right]\times\mathbb{E}^{\frac{1}{r'}}\left[\left(\frac{1}{tb_T''}\right)^{pr'}|\sigma_t|^{\alpha}\right] \nonumber \\
&& \qquad \leq \left(\left(C+2\right)\left(\frac{\sqrt{b_T'}}{b_T''}\right)^{p} + Ct^{-\frac{p}{2}}\left(\frac{\sqrt{\tilde{b}_T}}{b_T''}\right)^{p}\right)\mathbb{E}^{\frac{1}{r'}}\left[\sup_{0 \leq s \leq T}|\sigma_s|^{\alpha}\right] \nonumber
\end{eqnarray}
whose integral in $t$ over $\left[0, \, T\right]$ is finite by Lemma~\ref{lem:3.2}, since $p < 2$. This completes the proof of Theorem~\ref{thm:3.1}.

\subsection{Proof of Lemma~\ref{lem:5.3}}\label{pf5.5}
We start by setting $z = Q^{-1}(v), v \in \mathbb{R}$, which allows us to write
\begin{eqnarray*}
J_{u,\epsilon}(\lambda,\,y)&=& \mathbb{\int_{\mathbb{R}}}u(\lambda,\,z)\frac{1}{\sqrt{2\pi\epsilon}}e^{-\frac{(Q(z)-y)^{2}}{2\epsilon}}dz
\\
&=& \int_{\mathbb{R}}q\left(Q^{-1}(v)\right)u\left(\lambda,\,Q^{-1}(v)\right)\frac{1}{\sqrt{2\pi\epsilon}}e^{-\frac{(v-y)^{2}}{2\epsilon}}dv \\
\\
&=& \int_{\mathbb{R}}J_{u}(\lambda,\,v)\frac{1}{\sqrt{2\pi\epsilon}}e^{-\frac{(v-y)^{2}}{2\epsilon}}dv
\end{eqnarray*}
We will show 2 first. Observe that by a standard property of the standard heat kernel, the desired convergence holds in $L^2\left(\mathbb{R}\right)$ for all $\lambda \in \Lambda$. Then, by another standard property of that kernel (which is obtained by a simple application of the Cauchy-Schwartz inequality) we have $\left\Vert J_{u,\epsilon}(\lambda,\,\cdot)\right\Vert_{L^2\left(\mathbb{R}\right)} \leq \left\Vert J_{u}(\lambda,\,\cdot)\right\Vert_{L^2\left(\mathbb{R}\right)}$, allowing us to bound $\left\Vert J_{u,\epsilon}(\lambda,\,\cdot) - J_{u}(\lambda,\,\cdot)\right\Vert_{L^2\left(\mathbb{R}\right)}$ by $2\left\Vert J_{u}(\lambda,\,\cdot)\right\Vert_{L^2\left(\mathbb{R}\right)}$ whichs belongs to the space $L^2\left(\Lambda\right)$. Thus, the dominated convergence theorem implies that the desired convergence holds also in $L^{2}\left(\left(\Lambda, \, \mu \right);\,L^{2}\left(\mathbb{R}
\right)\right)$. 

We proceed now to the proof of 1. By our regularity assumptions and the properties of the standard heat kernel we have that $J_{u,\epsilon}(\lambda,\,y)$ is smooth and it's $n$-th derivative in $y$ is equal to 
\begin{eqnarray*}
\int_{\mathbb{R}}q\left(Q^{-1}(v)\right)u\left(\lambda,\,Q^{-1}(v)\right)\frac{1}{\sqrt{2\pi\epsilon}}P(v - y)e^{-\frac{(v-y)^{2}}{2\epsilon}}dv,
\end{eqnarray*}
where $P$ is some polynomial of degree $n$. Therefore, it suffices to show that for any positive integer $n$, the quantity 
\begin{eqnarray*}
\int_{\Lambda}\int_{\mathbb{R}}\left(\int_{\mathbb{R}}q\left(Q^{-1}(v)\right)u\left(\lambda,\,Q^{-1}(v)\right)(v - y)^{n}\frac{e^{-\frac{(v-y)^{2}}{2\epsilon}}}{\sqrt{2\pi\epsilon}}dv\right)^{2}dyd\mu(\lambda)
\end{eqnarray*}
is finite. By the Cauchy-Schwartz inequality, the above is bounded by
\begin{eqnarray}
& & \int_{\Lambda}\int_{\mathbb{R}}\left(\int_{\mathbb{R}}q^2\left(Q^{-1}(v)\right)u^2\left(\lambda,\,Q^{-1}(v)\right)(v - y)^{2n}\frac{e^{-\frac{(v-y)^{2}}{2\epsilon}}}{\sqrt{2\pi\epsilon}}dv\right) \nonumber \\
& & \qquad \qquad \times\left(\int_{\mathbb{R}}\frac{e^{-\frac{(v-y)^{2}}{2\epsilon}}}{\sqrt{2\pi\epsilon}}dv\right)dyd\mu(\lambda) \nonumber \\
&&\qquad = \int_{\Lambda}\int_{\mathbb{R}}\int_{\mathbb{R}}q^2\left(Q^{-1}(v)\right)u^2\left(\lambda,\,Q^{-1}(v)\right)(y - v)^{2n}\frac{e^{-\frac{(v-y)^{2}}{2\epsilon}}}{\sqrt{2\pi\epsilon}}dvdyd\mu(\lambda) \nonumber \\
\end{eqnarray}
and thus, by Fubini's Theorem, it suffices to show that the integral
\begin{eqnarray*}
\int_{\Lambda}\int_{\mathbb{R}}q^2\left(Q^{-1}(v)\right)u^2\left(\lambda,\,Q^{-1}(v)\right)\left(\int_{\mathbb{R}}(y - v)^{2n}\frac{e^{-\frac{(v-y)^{2}}{2\epsilon}}}{\sqrt{2\pi\epsilon}}dy\right)dvd\mu(\lambda)
\end{eqnarray*}
is finite, which is obvious since $J_u\left(\lambda, \, v\right) = q\left(Q^{-1}(v)\right)u\left(\lambda,\,Q^{-1}(v)\right)$ is integrable and the inner integral converges. The proof of Lemma~\ref{lem:5.3} is now complete.

\subsection{Proof of Lemma~\ref{lem:5.4}}\label{pf5.6}
Almost identical to the proof of \cite[Lemma 5.5]{HK17}, with the only differences being that we take $\delta' = 0$, integrations over $\mathbb{R}^{+}$ now take place over $\mathbb{R}$, the square root function in the smoothing kernel is replaced by $Q$, and the function $J_u\left(\lambda, \, v\right)$ which appears when we use a transformation to convert the smoothing kernel to the standard heat kernel equals $q\left(Q^{-1}(v)\right)u\left(\lambda,\,Q^{-1}(v)\right)$ rather than $2vu\left(\lambda,\,v^2\right)$.

\subsection{Proof of Lemma~\ref{mainid}}\label{pf5.7}
First, the assumed weighted integrability of $u$ and $u_{x}$ and Lemma~\ref{lem:5.3} imply that all the terms in the identity we are proving are finite. Next, by the definition of $\phi_{\epsilon}$ we have
\begin{eqnarray}
\int_{\mathbb{R}}g(z)u(s,\,x,\,z)\frac{\partial}{\partial z}\phi_{\epsilon}(z,\,y)dz 
&=& -\int_{\mathbb{R}}Q'(z)g(z)u(s,\,x,\,z)\frac{\partial}{\partial y}
\phi_{\epsilon}(z,\,y)dz \nonumber \\
&=& -\frac{\partial}{\partial y} I_{\epsilon,g(z)Q'(z)}(s,\,x,\,y) \label{eq:5.8}
\end{eqnarray}
and
\begin{eqnarray}
& &  \int_{\mathbb{R}}g(z)u(s,\,x,\,z)\frac{\partial^{2}}{\partial z^{2}}\phi_{\epsilon}(z,\,y)dz \nonumber \\
& & \qquad = -\int_{\mathbb{R}}g(z)u(s,\,x,\,z)\frac{\partial}{\partial z}\left(\frac{\partial}{\partial y}\phi_{\epsilon}(z,\,y)Q'(z)\right)dz \nonumber \\
&& \qquad = -\int_{\mathbb{R}}g(z)u(s,\,x,\,z)\frac{\partial}{\partial y}\frac{\partial}{\partial z}\phi_{\epsilon}(z,\,y)Q'(z)dz \nonumber \\
&& \qquad \qquad-\int_{\mathbb{R}}g(z)u(s,\,x,\,z)\frac{\partial}{\partial y}\phi_{\epsilon}(z,\,y)Q''(z)dz \nonumber \\
& & \qquad = -\left(\int_{\mathbb{R}}g(z)Q'(z)u(s,\,x,\,z)\frac{\partial}{\partial z}\phi_{\epsilon}(z,\,y)
dz\right)_{y} \nonumber \\
&& \qquad \qquad - \int_{\mathbb{R}}g(z)Q''(z)u(s,\,x,\,z)\frac{\partial}{\partial y}\left(\phi_{\epsilon}(z,\,y)\right)dz \nonumber \\
& & \qquad = \left(I_{\epsilon,g(z)\left(Q'(z)\right)^2}(s,\,x,\,y)\right)_{yy} -\left(I_{\epsilon,g(z)Q''(z)}(s,\,x,\,y)\right)_{y},
\label{eq:5.9}
\end{eqnarray}
for any $\epsilon>0$ and function $g$. Therefore, testing
\eqref{eq:5.1} against $\phi_{\epsilon}$, substituting from \eqref{eq:5.8} and \eqref{eq:5.9}, and taking $x$-derivatives outside the integrals in $z$ we find that
\begin{eqnarray}
I_{\epsilon,1}(t,\,x,\,y)&=& \int_{\mathbb{R}}U_{0}(x,\,z)\phi_{\epsilon}(z,\,y)dz-r\int_{0}^{t}\frac{\partial}{\partial x}I_{\epsilon,1}(s,\,x,\,y)ds \nonumber \\
& & \quad +\frac{1}{2}\int_{0}^{t}\frac{\partial}{\partial x}I_{\epsilon,h^{2}(z)}(s,\,x,\,y)ds -k\theta\int_{0}^{t}\frac{\partial}{\partial y}I_{\epsilon,Q'(z)}(s,\,x,\,y)ds \nonumber \\
& & \quad +k\int_{0}^{t}\frac{\partial}{\partial y}I_{\epsilon,zQ'(z)}(s,\,x,\,y)ds +\frac{1}{2}\int_{0}^{t}\frac{\partial^{2}}{\partial x^{2}}I_{\epsilon,h^{2}(z)}(s,\,x,\,y)ds \nonumber \\
& & \quad +\frac{\xi^{2}}{2}\int_{0}^{t}\frac{\partial^{2}}{\partial y^{2}}I_{\epsilon,1}(s,\,x,\,y)ds +\frac{\xi^{2}}{2}\int_{0}^{t}\frac{\partial}{\partial y}I_{\epsilon,q'(z)}(s,\,x,\,y)ds \nonumber \\
& & \quad +\rho\int_{0}^{t}\frac{\partial^{2}}{\partial x \partial y}I_{\epsilon,h\left(z\right)}(s,\,x,\,y)ds -\rho\int_{0}^{t}\frac{\partial}{\partial x}I_{\epsilon,h(z)}(s,\,x,\,y)dW_{s}^{0} \nonumber \\
& & \quad -\xi\rho_{2}\int_{0}^{t}\frac{\partial}{\partial y}I_{\epsilon,1}(s,\,x,\,y)dB_{s}^{0}. \label{eq:5.10}
\end{eqnarray}
If we multiply the above by $w^2(x)$, apply Ito's formula for the $L^{2}(\mathbb{R}^{+})$ norm (Theorem~3.1 from \cite{KR81} for the triple $H_{0}^{1}\subset L^{2}\subset H^{-1}$, with $\Lambda(u) = w(\cdot)u$), and then integrate in
$y$ over $\mathbb{R}$, we obtain
\begin{eqnarray}
& & \left\Vert I_{\epsilon,1}(t,\,\cdot)\right\Vert _{\tilde{L}_{w}^2}^{2}=\left\Vert \int_{\mathbb{R}}U_{0}(\cdot,z)
\phi_{\epsilon}(z,\cdot)dz\right\Vert _{\tilde{L}_{0,w}^2}^{2} \nonumber \\
& & \qquad -2r\int_{0}^{t}\left\langle \frac{\partial}{\partial x}I_{\epsilon,1}(s,\cdot),I_{\epsilon,1}(s,\cdot)\right\rangle _{\tilde{L}_{0,w}^2}ds+\int_{0}^{t}\left\langle \frac{\partial}{\partial x}I_{\epsilon,h^{2}(z)}(s,\cdot),I_{\epsilon,1}(s,\cdot)\right\rangle _{\tilde{L}_{0,w}^2}ds \nonumber \\
& & \qquad -2k\theta\int_{0}^{t}\left\langle \frac{\partial}{\partial y}I_{\epsilon, Q'(z)}(s,\cdot),I_{\epsilon,1}(s,\cdot)\right\rangle _{\tilde{L}_{0,w}^2}ds \nonumber \\
& & \qquad +2k\int_{0}^{t}\left\langle \frac{\partial}{\partial y}I_{\epsilon,zQ'(z)}(s,\cdot),I_{\epsilon,1}(s,\cdot)\right\rangle _{\tilde{L}_{0,w}^2}ds \nonumber \\
& & \qquad +\int_{0}^{t}\left\langle \frac{\partial^{2}}{\partial x^{2}}I_{\epsilon,h^{2}(z)}(s,\cdot),\,I_{\epsilon,1}(s,\cdot)\right\rangle _{\tilde{L}_{0,w}^2}ds \nonumber \\
& & \qquad +\xi^{2}\int_{0}^{t}\left\langle \frac{\partial^{2}}{\partial y^{2}}I_{\epsilon,1}(s,\cdot),
I_{\epsilon,1}(s,\cdot)\right\rangle _{\tilde{L}_{0,w}^2}ds \nonumber \\
& & \qquad +2\rho\int_{0}^{t}\left\langle \frac{\partial^{2}}{\partial x \partial y}I_{\epsilon,h\left(z\right)}(s,\cdot),\,I_{\epsilon,1}(s,\cdot)\right\rangle _{\tilde{L}_{0,w}^2}ds \nonumber \\
& & \qquad +2\xi\rho_{3}\rho_{1}\rho_{2}\int_{0}^{t}\left\langle \frac{\partial}{\partial x}I_{\epsilon,h\left(z\right)}(s,\cdot),\,\frac{\partial}{\partial y}I_{\epsilon,1}(s,\cdot)\right\rangle _{\tilde{L}_{0,w}^2}ds \nonumber \\
& & \qquad +\xi^{2}\int_{0}^{t}\left\langle \frac{\partial}{\partial y}I_{\epsilon, q'(z)}(s,\cdot),I_{\epsilon,1}(s,\,\cdot)\right\rangle _{\tilde{L}_{0,w}^2}ds+\rho_{1}^{2}\int_{0}^{t}\left\Vert \frac{\partial}{\partial x}
I_{\epsilon,h(z)}(s,\cdot)\right\Vert _{\tilde{L}_{0,w}^2}^{2}ds \nonumber \\
& & \qquad +\xi^{2}\rho_{2}^{2}\int_{0}^{t}\left\Vert \frac{\partial}{\partial y}
I_{\epsilon,1}(s,\cdot)\right\Vert _{\tilde{L}_{0,w}^2}^{2}ds
\nonumber \\
& & \qquad -2\rho_{1}\int_{0}^{t}\left\langle \frac{\partial}{\partial x}I_{\epsilon,h(z)}(s,\cdot),I_{\epsilon,1}(s,\cdot)\right\rangle _{\tilde{L}_{0,w}^2}dW_{s}^{0} \nonumber \\
& &\qquad -2\xi\rho_{2}\int_{0}^{t}\left\langle \frac{\partial}{\partial y}I_{\epsilon,1}(s,\cdot),I_{\epsilon,1}(s,\cdot)\right\rangle _{\tilde{L}_{0,w}^2}dB_{s}^{0}, \label{eq:5.11}
\end{eqnarray}
where an integration by parts yields:
\begin{eqnarray}\label{simpleIBP}
-2r\left\langle \frac{\partial}{\partial x}I_{\epsilon,1}(s,\cdot),I_{\epsilon,1}(s,\cdot)\right\rangle _{\tilde{L}_{0,w}^2} &=& -2r\int_{\mathbb{R}}\int_0^{+\infty}w^2(x)\left(I_{\epsilon, 1}(s, x, y)\right)_xI_{\epsilon, 1}(s, x, y)dxdy \nonumber \\
&=& -r\int_{\mathbb{R}}\int_0^{+\infty}w^2(x)\left(I_{\epsilon, 1}^2(s, x, y)\right)_xdxdy \nonumber \\
&=& r\int_{\mathbb{R}}\int_0^{1}I_{\epsilon, 1}^2(s, x, y)dxdy \nonumber \\
&=& r\left\Vert \mathbb{I}_{\left[0, \, 1\right] \times \mathbb{R}}(\cdot)I_{\epsilon,1}(s,\cdot)\right\Vert _{\tilde{L}_{0}^2}^2
\end{eqnarray}
Next, by the definition of $u_{xx}$ in our SPDE we have
\begin{eqnarray}
& & \int_{\mathbb{R}^+}\int_{\mathbb{R}}u_{xx}(s,\,x,\,z)\phi_{\epsilon}(z,\,y)w^2(x)f(x)dzdx \nonumber \\
& & \qquad\qquad = \int_{\mathbb{R}^+}\int_{\mathbb{R}}u(s,\,x,\,z)\phi_{\epsilon}(z,\,y)\left(w^2(x)f(x)\right)_{xx}dzdx \nonumber \\
& & \qquad\qquad =-\int_{\mathbb{R}^+}\int_{\mathbb{R}}u_{x}(s,\,x,\,z)\phi_{\epsilon}(z,\,y)\left(w^2(x)f(x)\right)_{x}dzdx \nonumber \\
& & \qquad\qquad =-\int_{\mathbb{R}^+}\int_{\mathbb{R}}w^2(x)u_{x}(s,\,x,\,z)\phi_{\epsilon}(z,\,y)f_{x}(x)dzdx \nonumber \\
& & \qquad\qquad \qquad -\int_{\left[0, \, 1\right]}\int_{\mathbb{R}}u_{x}(s,\,x,\,z)\phi_{\epsilon}(z,\,y)f(x)dzdx, \label{eq:5.12}
\end{eqnarray}
which equals
\begin{eqnarray}
& & -\int_{\mathbb{R}^+}\int_{\mathbb{R}}w^2(x)u_{x}(s,\,x,\,z)\phi_{\epsilon}(z,\,y)f_{x}(x)dzdx \nonumber \\
& & \qquad +\int_{\left[0, \, 1\right]}\int_{\mathbb{R}}u(s,\,x,\,z)\phi_{\epsilon}(z,\,y)f_{x}(x)dzdx -\int_{\mathbb{R}}u(s,\,1,\,z)\phi_{\epsilon}(z,\,y)f(1)dz \nonumber
\end{eqnarray}
for any smooth function $f$ defined on $\left[0, \, +\infty\right)$. Since $u\in H_{0, w^2(x)}^{1}$ for any $y$ and since $f(1)$ can be controlled by the $H_{0, w^2(x)}^{1}$ norm of $f$ (by using Morrey's inequality near $1$), \eqref{eq:5.12} defines a linear functional on the space of smooth functions $f$ (defined on $\left[0, \, +\infty\right)$) which is bounded under the topology of $H_{0, w^2(x)}^{1}$. Then, since those functions form a dense subspace of $H_{0, w^2(x)}^{1}$, we deduce that \eqref{eq:5.12} holds for any $f \in H_{0, w^2(x)}^{1}$. Therefore, taking $f=I_{\epsilon,1}(s,\,\cdot,\,y)$ and integrating \eqref{eq:5.12} in $\left(y, t\right)$ over $\mathbb{R} \times \mathbb{R}^{+}$, we obtain
\begin{eqnarray}
&&\int_{0}^{t}\left\langle \frac{\partial^{2}}{\partial x^{2}}I_{\epsilon,h^{2}(z)}(s,\,\cdot),\,I_{\epsilon,1}(s,\,\cdot)\right\rangle _{\tilde{L}_{0,w}^2}ds \nonumber \\
&& \qquad =-\int_{0}^{t}\left\langle \frac{\partial}{\partial x}I_{\epsilon,h^{2}(z)}(s,\,\cdot),\,\frac{\partial}{\partial x}I_{\epsilon,1}(s,\,\cdot)\right\rangle _{\tilde{L}_{0,w}^2}ds. \nonumber \\
&& \qquad \qquad -\int_{0}^{t}\left\langle \frac{\partial}{\partial x}I_{\epsilon,h^{2}(z)}(s,\,\cdot),\,\mathbb{I}_{\left[0, \, 1\right] \times \mathbb{R}}(\cdot)I_{\epsilon,1}(s,\,\cdot)\right\rangle _{\tilde{L}_{0}^2}ds. \label{eq:5.13}
\end{eqnarray}
Finally, integrating by parts we find that
%\begin{eqnarray}
%& & \int_{0}^{t}\left\langle \frac{\partial^{2}}{\partial y^{2}}I_{\epsilon,1}(s,\,\cdot),\,I_{\epsilon,1}(s,\,\cdot)\right\rangle _{\tilde{L}_{0,w}^2}ds = -\int_{0}^{t}\left\Vert \frac{\partial}{\partial y}I_{\epsilon,1}(s,\,\cdot)\right\Vert _{\tilde{L}_{0,w}^2}^{2}ds \label{eq:5.14}
%\end{eqnarray}
\begin{equation}
\int_{0}^{t}\left\langle \frac{\partial^{2}}{\partial y^{2}}I_{\epsilon,1}(s,\,\cdot),\,I_{\epsilon,1}(s,\,\cdot)\right\rangle _{\tilde{L}_{0,w}^2}ds = -\int_{0}^{t}\left\Vert \frac{\partial}{\partial y}I_{\epsilon,1}(s,\,\cdot)\right\Vert _{\tilde{L}_{0,w}^2}^{2}ds \label{eq:5.14}
\end{equation}
and
%\begin{eqnarray}
%& & \int_{0}^{t}\left\langle \frac{\partial^{2}}{\partial x \partial y}I_{\epsilon,h\left(z\right)}(s,\cdot),\,I_{\epsilon,1}(s,\cdot)\right\rangle _{\tilde{L}_{0,w}^2}ds  \nonumber \\
%& & \qquad = - \int_{0}^{t}\left\langle \frac{\partial}{\partial x}I_{\epsilon,h\left(z\right)}(s,\cdot),\,\frac{\partial}{\partial y}I_{\epsilon,1}(s,\cdot)\right\rangle _{\tilde{L}_{0,w}^2}ds
%\label{eq:5.15}
%\end{eqnarray}
\begin{equation}
\int_{0}^{t}\left\langle \frac{\partial^{2}}{\partial x \partial y}I_{\epsilon,h\left(z\right)}(s,\cdot),\,I_{\epsilon,1}(s,\cdot)\right\rangle _{\tilde{L}_{0,w}^2}ds  = - \int_{0}^{t}\left\langle \frac{\partial}{\partial x}I_{\epsilon,h\left(z\right)}(s,\cdot),\,\frac{\partial}{\partial y}I_{\epsilon,1}(s,\cdot)\right\rangle _{\tilde{L}_{0,w}^2}ds
\label{eq:5.15}
\end{equation}
and also
%\begin{eqnarray}
%& & \int_{0}^{t}\left\langle \frac{\partial}{\partial y}I_{\epsilon,g(z)}(s,\,\cdot),\,I_{\epsilon,1}(s,\,\cdot)
%\right\rangle _{\tilde{L}_{0,w}^2}ds \nonumber \\
%& & \qquad = -\int_{0}^{t}\left\langle I_{\epsilon, g(z)}(s,\,\cdot),\,\frac{\partial}{\partial y}I_{\epsilon,1}(s,\,\cdot)
%\right\rangle_{\tilde{L}_{0,w}^2}ds,
%\label{eq:5.16}
%\end{eqnarray}
\begin{equation}
\int_{0}^{t}\left\langle \frac{\partial}{\partial y}I_{\epsilon,g(z)}(s,\,\cdot),\,I_{\epsilon,1}(s,\,\cdot)
\right\rangle _{\tilde{L}_{0,w}^2}ds  = -\int_{0}^{t}\left\langle I_{\epsilon, g(z)}(s,\,\cdot),\,\frac{\partial}{\partial y}I_{\epsilon,1}(s,\,\cdot)
\right\rangle_{\tilde{L}_{0,w}^2}ds,
\label{eq:5.16}
\end{equation}
for $g(z) \in \{Q'(z), \, zQ'(z), \, q'(z)\}$. The steps we have followed are almost identical to those in the proof of \cite[Lemma~5.5]{HK17} and, as in there, we can justify that integration by parts in the $y$-direction does not leave any boundary term at infinity. Indeed, we can use Morrey's inequality in the same way and Lemma~\ref{lem:5.3} for large $\delta'$, in order to show that all the terms inside the inner products which do not involve $u_x$ are rapidly decreasing in $y$. Substituting \eqref{simpleIBP}, \eqref{eq:5.13} - \eqref{eq:5.15} and \eqref{eq:5.16} for $g(z) \in \{Q'(z), \, zQ'(z), \, q'(z)\}$ in \eqref{eq:5.11} and then taking expectations, we obtain \eqref{mainideq}, completing the proof of Lemma~\ref{mainid}.


\begin{thebibliography}{10}

\bibitem{AHL} Ahmad, F.; Hambly, B. and Ledger, S. A stochastic partial differential equation model for the pricing of mortgage backed securities, \emph{Stochastic Processes and their Applications}, {\bf 128} (2018), 3778--3806.

\bibitem{Aldous} Aldous, D. (1985) Exchangeability and related topics, Ecole
d\textquoteright Ete St Flour 1983, Springer Lecture Notes in Mathematics,
1117, 1\textendash 198

\bibitem{Brezis} Br\'{e}zis, H. \emph{Functional analysis, Sobolev spaces and
partial differential equations}. c2011 | New York ; London : Springer
| xiii, 599 p.

\bibitem{BR12} Bujok, K. and Reisinger, C. 
Numerical valuation of basket credit derivatives in structural jump-diffusion models, 
\emph{J. Comput. Finance}, {\bf 15} (2012), 115--158.

\bibitem{BHHJR}
 Bush, N.; Hambly, B.M.; Haworth, H.; Jin, L., and Reisinger, C. Stochastic evolution equations 
 in portfolio credit modelling.
\emph{SIAM J. Financial Math.}, {\bf 2} (2011), 627--664.

\bibitem{CKL} Crisan, D.; Kurtz, T.G. and Lee, Y. Conditional distributions, exchangeable particle systems, and stochastic partial differential equations, \emph{Annales de l'institut Henri Poincare (B) Probability and Statistics}, {\bf 50(3)} (2014), 946--974.

\bibitem{FOU} Fouque, J.; Papanicolaou, G. and Sircar, K. \emph{Derivatives in financial markets with stochastic volatility}. Cambridge: Cambridge University Press (2000)

\bibitem{GSS} Giesecke, K; Spiliopoulos, K. and Sowers, R. B. Default clustering in large portfolios: Typical events. \emph{Ann. Appl. Probab.} {\bf 23} (2013), 348--385.

\bibitem{GSSS} Giesecke, K; Spiliopoulos, K.; Sowers, R. B. and Sirignano, J. A. Large portfolio
asymptotics for loss from default. \emph{Math. Finance} {\bf 25} (2015), 77--114.

\bibitem{GR} Giles, M.B. and Reisinger, C. Stochastic finite differences and multilevel Monte Carlo 
for a class of SPDEs in finance. \emph{SIAM J. Financial. Math.}, {\bf 3} (2012), 572--592.

\bibitem{HK17} Hambly, B. and Kolliopoulos, N. Stochastic evolution equations for large portfolios of stochastic volatility models. \emph{SIAM J. Financial Math.} {\bf 8} (2017), 962--1014.

\bibitem{ERR} Hambly, B. and Kolliopoulos, N. Erratum: Stochastic evolution equations for large portfolios of stochastic volatility models. \emph{SIAM J. Financial Math.} {\bf 10} (2019), 857--876. 

\bibitem{HL16} Hambly, B. and Ledger, S. A stochastic McKean--Vlasov equation
for absorbing diffusions on the half-line. \emph{Ann. Appl. Probab.} {\bf 27} (2017), 2698--2752.

\bibitem{NK18} Hambly, B and Kolliopoulos, N. Fast mean-reversion asymptotics for large portfolios of stochastic volatility models. \emph{Finance and Stochastics} {\bf 24(3)} (2020), 757--794.

\bibitem{HS}  Hambly, B. and Sojmark, A. An {SPDE} model for systemic risk with endogenous contagion, \emph{Finance Stoch.} {\bf 23} (2019), 535--594.

\bibitem{HV15}  Hambly, B. and Vaicenavicius, J. The 3/2 model as a stochastic volatility approximation
for a large-basket price-weighted index. \emph{Int. J. Theor. Appl. Finance}, {\bf 18} (2015), no. 6, 1550041.

%\bibitem{KKD1} Kabanov, Y.M. and Kramkov, D.O. Asymptotic arbitrage in large financial markets. \emph{Finance and Stochastics}, {\bf 2} (1998), 143--172

%\bibitem{KKD2} Kabanov, Y.M. and Kramkov, D.O. Large financial markets: asymptotic arbitrage and contiguity.
%\emph{Theory of Probability \& Its Applications}, {\bf 39} (1995), 182--187

\bibitem{KS88} Karatzas, I. and Shreve, S.-E: \emph{Brownian motion and stochastic
calculus}. Graduate Texts in Mathematics, 113. New York etc. Springer-Verlag
(1988)

\bibitem{KKP} Kotelenez, P. and Kurtz, T. Macroscopic limits for stochastic partial differential equations of McKean-Vlasov type. \emph{Probab. Theory Related Fields},  {\bf 146} (2010), 189--222

\bibitem{Krylov}
Krylov, N. A $W_2^n$ -theory of the Dirichlet problem for SPDEs
in general smooth domains. \emph{Probab. Theory Related Fields},
{\bf 98} (1994), 389--421.

\bibitem{KR81} Krylov, N. and Rozovskii, B. Stochastic evolution equations. \emph{J. Soviet Math.} {\bf 16} (1981), 1233--1277.

\bibitem{KX99} Kurtz, T.G. and Xiong, J. Particle representations
For a class of non-linear SPDEs, \emph{Stoch. Proc. Applic}, {\bf 83} (1999), 103--126.

\bibitem{Ledger14}
Ledger, S. Sharp regularity near an absorbing boundary
for solutions to second order SPDEs in a half-line with constant coefficients.
\emph{Stoch. Partial Differ. Equ. Anal. Comput.},
{\bf 2} (2014), 1--26.

\bibitem{NS} Nadtochiy, S. and Shkolnikov,  M. Particle systems with singular interaction through hitting times: application in systemic risk modeling, \emph{Ann. Appl. Probab.}, {\bf 29} (2019), 89--129.

\bibitem{Nualart} Nualart, D. \emph{The Malliavin calculus and related topics}.
Probability and Its Applications. Springer-Verlag (1995).

\bibitem{NUZA} Nualart, D. and Zakai, M. The partial Malliavin calculus. \emph{The Malliavin calculus and related topics}.
S\'eminaire de probabilit\'es de Strasbourg. Springer - Lecture Notes in Mathematics. {\bf 23} 
(1989), 362-381.

\bibitem{RW00} Rogers, L.C.G. and Williams, D. \emph{Diffusions, Markov Processes and
Martingales}. Cambridge: Cambridge University Press, second ed., 2000.

\bibitem{Sch} Sch\"{o}nbucher, P.J. \emph{Credit Derivative Pricing Models}. Wiley

\bibitem{SG} Sirignano, J.A. and Giesecke, K., Risk Analysis for Large Pools of Loans. 
\emph{Management Science} {\bf 65} (2018), 107--121.

\bibitem{STG} Sirignano, J. A.; Tsoukalas, G.; Giesecke, K. Large-scale loan portfolio selection. \emph{Oper. Res.}, {\bf 64} 
(2016), 1239--1255.

\bibitem{SSG} Spiliopoulos, K.; Sirignano, J. A. and Giesecke, K. 
Fluctuation analysis for the loss from default. \emph{Stochastic Process. Appl.} {\bf 124} (2014), 2322--2362.

\bibitem{SSO1} Spiliopoulos, K.; Sowers, R. B. Recovery rates in investment-grade pools of credit assets: A large deviations analysis. \emph{Stochastic Process. Appl.} {\bf 121(12)} (2011), 2861--2898.

\bibitem{SSO2} Spiliopoulos, K.; Sowers, R. B. Default Clustering in Large Pools: Large Deviations. \emph{SIAM J. Financial Math.} {\bf 6(1)} (2015), 86--116.

\end{thebibliography}
\end{document}